\documentclass{amsart}

\usepackage{amsmath,amsthm,amsfonts,amssymb,oldgerm}
\usepackage[frenchb,english]{babel}
\usepackage{indentfirst}
\numberwithin{equation}{section}
\usepackage[citecolor=blue,colorlinks=true]{hyperref}
\usepackage{geometry,color}
\allowdisplaybreaks[1]
\geometry{ hmargin=3.6cm, vmargin=4cm }
\usepackage{mathrsfs}


\newtheorem{theorem}{Theorem}[section]
\newtheorem{proposition}[theorem]{Proposition}

\newtheorem{lemma}[theorem]{Lemma}
\newtheorem{remark}[theorem]{Remark}

\theoremstyle{definition}
\newtheorem{definition}[theorem]{Definition}



\renewcommand\d{\partial}
\def\eps{\varepsilon }
\newcommand{\dd}{\mathrm{d}}
\renewcommand{\div}{\textrm{div}}

\newcommand{\Id}{{\rm Id }}

\newcommand{\E}{\mathbb{E}}
\newcommand{\N}{\mathbb{N}}

\newcommand{\R}{\mathbb{R}}
\newcommand{\T}{\mathbb{T}}
\newcommand{\Z}{\mathbb{Z}}
\newcommand{\PP}{\mathbb{P}}

\newcommand\cA{{\mathcal A}}
\newcommand\cB{{\mathcal B}}
\newcommand\cF{{\mathcal F}}
\newcommand\cG{{\mathcal G}}

\newcommand\cM{{\mathcal M}}
\newcommand\cN{{\mathcal N}}

\newcommand\cQ{{\mathcal Q}}

\newcommand{\Pl}{\Pi_{\textrm{loc}}}
\newcommand\dM{\cM^{\frac12}}
\newcommand{\rinf}{\rho_{\infty}}

\DeclareMathOperator{\Tr}{Tr}

\def\<{\langle}
\def\>{\rangle}
\def\refe#1{\eqref{#1}}
\def\L{\mathscr{L}}


\title{Invariant measures\\ for a stochastic Fokker-Planck equation}

\author{Sylvain De Moor}
\address{\'ENS Cachan-Antenne de Bretagne, Bruz, France}
\email{sylvain.demoor@bretagne.ens-cachan.fr}
\thanks{Research of Sylvain De Moor was partially supported by the ANR project
STOSYMAP}
 
\author{L. Miguel Rodrigues}
\address{Universit\'e Lyon 1 \& INRIA, Villeurbanne, France}
\email{rodrigues@math.univ-lyon1.fr}
\thanks{Research of L. Miguel Rodrigues was partially supported by the ANR project
BoND ANR-13-BS01-0009-01}

\author{Julien Vovelle}
\address{CNRS \& Universit\'e Lyon 1, Villeurbanne, France}
\email{vovelle@math.univ-lyon1.fr}
\thanks{Research of Julien Vovelle was partially supported by the ANR project
STOSYMAP and the ANR project STAB}

\begin{document}
\begin{abstract}
\footnotesize We study a kinetic Vlasov/Fokker-Planck equation perturbed by a stochastic forcing term. When the noise intensity is not too large, we solve the corresponding Cauchy problem in a space of functions ensuring good localization in the velocity variable. Then we show under similar conditions that the generated dynamics, with prescribed total mass, admits a unique invariant measure which is exponentially mixing. The proof relies on hypocoercive estimates and hypoelliptic regularity. At last we provide an explicit example showing that our analytic framework does require some smallness condition on the noise intensity.
\vspace{0.1cm}

\noindent 
{\small\sc Keywords.}  {\small Stochastic Vlasov equation; Fokker-Planck operator; invariant measure; mixing; hypocoercivity; hypoellipticity.}
\end{abstract}

\date{\today}
\maketitle

\tableofcontents

\section{Introduction}

\noindent We are interested in the large-time dynamics generated by the following stochastic Fokker-Planck equation
\begin{equation}\label{FP}
\dd f\ +\ v\cdot\nabla_x f\ \dd t\ +\ \lambda \nabla_vf\odot\dd W_t\ =\ \cQ(f)\ \dd t.
\end{equation}
The unknown $f$ depends on a time variable $t\in[0,\infty)$, a space variable $x\in\T^N$, a velocity $v\in\R^N$ and the alea. The operator $\cQ$ is the Fokker-Planck operator whose expression is given by
\begin{equation}\label{defQQ}
\cQ(f)\ =\ \Delta_v f\ +\ \div_v(vf).
\end{equation}
As is customary for stochastic dynamics our focus is on the existence of invariant measures and exponential mixing.

Before entering into the heart of the analysis, since stochastic kinetic modeling is not a widespread practice we first provide some elements of justification and explanation for the equation itself.

\bigskip

\paragraph{\textbf{Singular Vlasov Force Term}} 
The classical deterministic Vlasov-Fokker-Planck equation 
\begin{equation}\label{FPVlasov}
\partial_t f\ +\ v\cdot\nabla_x f\ +\ F(t,x)\cdot \nabla_v f\ =\ \cQ(f),
\end{equation}
with $\cQ$ given by \eqref{defQQ}, is the evolution equation for the density $(x,v)\mapsto f(t,x,v)$ with respect to the Lebesgue measure on $\T^N\times\R^N$ of the law of the process $(x_t,v_t)$ solution to the stochastic differential system
\begin{equation}\label{XVsto}
\begin{cases}
\,\dd x_t&=\ v_t\dd t\,,\\
\,\dd v_t&=\ -F(t,x_t)\dd t-v_t\dd t+\sqrt{2}\dd\hat B_t.
\end{cases}
\end{equation}
Here, $(\hat B_t)_{t\geq 0}$ is an $N$-dimensional Brownian motion on a probability space $(\hat\Omega,\hat{\mathcal{F}},\hat\PP)$ and its presence in the system above accounts for the interaction with a common thermal bath. We are interested in the situation where the force $F(t,x)$ in \eqref{FPVlasov} is so singular that a stochastic modeling is more appropriate. We focus on the case where the singularity is of a very specific type, namely when it is given by a time-white noise. This means that there will be two different noise terms in \eqref{XVsto}. This situation is also encountered in mean field games, see for example Equation~(2) in \cite{CardaliaguetDelarueLasryLions2015}\footnote{In our context, this is $\dd\hat B_t$ that is the common noise.}. The notation $\odot$ in \eqref{FP} emphasizes the scalar product in $\R^N$ and the fact that we consider the stochastic term in the Stratonovich sense\footnote{See Appendix~\ref{app:ItoInt}.} \cite[Chapter~20]{SchillingPartzsch2014}. This is indeed more natural, \cite{WongZakai1965}, when the singular force term $\dd W_t$ arises from the singular limit of more regular force terms $F(t,x)\dd t$, as in \eqref{FPVlasov}. To some extent our modeling considerations are similar to the ones leading to the introduction of a stochastic force in the Navier-Stokes equations but the nature of the description, kinetic rather than macroscopic, leads to a very different equation where the noise is multiplicative and in front of a derivative instead of being simply additive.

\bigskip

From a modeling point of view it would be more satisfactory to consider a force that instead of being purely singular would be the sum of a time-white noise and of a part that would be given as a smooth deterministic function of the density of the law. This would result in a nonlinear stochastic partial differential equation. Yet our goal is to study the large-time dynamics induced by \eqref{FP} and we stress that even in a purely deterministic setting ($\lambda=0$) where the large-time dynamics is trivial --- in the sense that it leads at exponential rate to the convergence towards a unique stationary solution --- the quantitative analysis of simplest relevant models is a very recent achievement, see \cite{HwangJang13} and \cite{HerauThomann16}. Moreover the foregoing analyses are actually restricted by (non explicit) weakly nonlinear assumptions. An analogous stochastic nonlinear analysis appears thus as far beyond reach of a first investigation on our class of problem. We believe however that we could add deterministic force terms preserving linearity of \eqref{FP} with almost immaterial modifications.

\bigskip

\paragraph{\textbf{Time-white noise}} 
To be more specific concerning what a time-white noise is, let us first recall the L{\'e}vy-Ciesielski construction of the Brownian motion on $[0,1]$, \cite[Section~3.2]{SchillingPartzsch2014}. Let $(\Omega,\mathcal{F},\PP)$ be a probability space and $(a_n)$ be a sequence of independent centered normalized Gaussian random variables. Also let $(H_n)$ denote the Haar basis of $L^2(0,1)$. Then (\textit{cf.} \cite[Formula~(3.1)]{SchillingPartzsch2014}) the formula
$$
\tilde{\beta}(t)=\sum_{n=0}^\infty a_n \<\mathbf{1}_{[0,t]},H_n\>_{L^2(0,1)}
$$
defines a Brownian motion on $[0,1]$. The (constant in space, time) white noise is then
\begin{equation}\label{LevyCiesielsky}
\frac{\dd\;}{\dd t}\tilde{\beta}(t)=\sum_{n=0}^\infty a_n H_n(t).
\end{equation}
Note that, as suggested by the terminology, the white noise sums all frequencies, roughly encoded by the label $n$, with equal strength. Though equally interesting from a modeling point of view, a space-time white noise seems too singular to be handled in \eqref{XVsto} by currently available techniques in the analysis of stochastic partial differential equations. We will restrict instead to a time-white noise which is colored in space.

To do so, more generally, let $(G_j)_{j\in\N}$ be an orthonormal basis of $L^2(\T^N;\R^N)$, let $(\beta_j(t))_{j\in\N}$ be a sequence of independent Brownian motion on $\R_+$ and let $\Gamma$ be a self-adjoint non-negative trace-class operator on $L^2(\T^N;\R^N)$. We set
\begin{equation}\label{colouredNoise1}
W_t(x)\ :=\ \sum_{j\geq 0} \Gamma^{\frac{1}{2}}G_j(x)\ \beta_j(t) 
\end{equation}
so that $W_t$ is well defined as an $L^2(\T^N;\R^N)$-valued process. Indeed, using independence of the $(\beta_0,\beta_1,\ldots)$,
\begin{equation}\label{WienerHS}
\E\|W_t\|_{L^2(\T^N;\R^N)}^2=t\,\Tr(\Gamma)
\end{equation}
is then finite for every $t$. Alternatively $W_t$ can be written $W_t=\Gamma^{\frac{1}{2}} W^*(t)$ where $W^*$ is the cylindrical Wiener process~\cite[Section~4.3.1]{daprato}
\begin{equation}\label{cylindricalW}
W^*(t):=\sum_{j\geq 0}\beta_j(t) G_j\,.
\end{equation}
The process $W^*(t)$ is well defined as a $\mathfrak{U}$-valued process, where $\mathfrak{U}$ is any Hilbert space such that the injection $L^2(\T^N)\rightharpoonup \mathfrak{U}$ is Hilbert-Schmidt~\cite[Section~4.3.1]{daprato} since by the same arguments as for \eqref{WienerHS}, $\E\|W^*(t)\|^2_{\mathfrak{U}}$ is finite for every $t$. Therefore $W_t$ will define a $\Gamma$-Wiener process on $L^2(\T^N;\R^N)$, \cite[Section~4.1]{daprato}. Then the time-derivative of $W_t$, which we write formally as
\begin{equation}\label{ddtcylindricalW}
\frac{\dd\;}{\dd t}W^*(t):=\sum_{j\geq 0}\frac{\dd\;}{\dd t}\beta_j(t) \Gamma^{\frac12}G_j.
\end{equation}
is the corresponding time white noise on $\R_+\times\T^N$. 

Note that it would be harmless to assume that $(G_j)$ is a basis of eigenvectors of $\Gamma$, that is, $\Gamma G_j = \gamma_jG_j$, where $(\gamma_j)$ is a sequence of non-negative numbers in $l^1(\N)$. Then by expanding each $\beta_j$ under the form \eqref{LevyCiesielsky} and noticing that the elements $(t,x)\mapsto H_n(t)G_j(x)$, $n,j\in\N$ constitute an orthonormal basis of a time-space $L^2$, definition \eqref{ddtcylindricalW} appears as a generalization of \eqref{LevyCiesielsky} where amplitudes are of equal strength, independent and Gaussian with respect to the time-frequency $n$ but (deterministic and) damped with factor $\gamma_j^{1/2 }$ with respect to the space-frequency $j$. Hence the terminology above : white in time, colored in space.

In what follows, we set $F_j =\Gamma^{\frac{1}{2}}G_j$ and write $W_t$ in the form
$$
W_t(x)\ =\ \sum_{j\geq 0} F_j(x)\ \beta_j(t).
$$
We assume the following additional regularity in space of the noise: 
\begin{equation}\label{noise}
\textrm{each }F_j\textrm{ is }\mathcal{C}^1\qquad\textrm{ and }\qquad
\sum_{j\geq 0}(\|F_j\|^2_{\infty}+\|\nabla_xF_j\|^2_{\infty})\leq 1.
\end{equation}
Since we fix the intensity of the noise to the value $1$, it is the parameter $\lambda$ that will measure the strength of the noise term in \eqref{FP}. 
\bigskip

\paragraph{\textbf{Solving \eqref{FP} in $L^2$}}

Before dealing with large-time behavior, we first provide what we believe to be the most natural existence result for \eqref{FP}.

For this purpose, we fix a filtration $(\mathcal{F}_t)_{t\geq 0}$ on $(\Omega,\mathcal{F},\PP)$. We assume that $(\mathcal{F}_t)_{t\geq 0}$ satisfies the usual regularity condition: it is right continuous and $\mathcal{F}_0$ contains all the
$\PP$-–null sets of $\mathcal{F}$. We recall\footnote{See Appendix~\ref{app:ItoInt}.} that a process $(X(t))_{t\in[0,T]}$ with values in a measure space is said to be adapted if, for each $t\in[0,T]$, $X(t)$ is a random variable on $(\Omega,\mathcal{F}_t)$.

\begin{theorem}\label{thmexFP}
Suppose that hypothesis $(\ref{noise})$ is satisfied and consider $\lambda\in\R$ and an initial datum
$$
f_\mathrm{in}\in L^2(\Omega;L^2(\T^N\times\R^N)). 
$$
Then there exists an adapted process $(f(t))_{t\geq 0}$ on $L^2(\Omega;L^2((\T^N\times\R^N)))$ which satisfies
\begin{itemize}
\item[$(i)$]\label{SolFPItem1} for any $T>0$, $f\in L^2(\Omega;\mathcal{C}_w([0,T];L^2(\T^N\times\R^N))))$;
\item[$(ii)$]\label{SolFPItem2} for any $T>0$, $\nabla_v f\in L^2(\Omega\times [0,T];L^2(\T^N\times\R^N)))$;
\item[$(iii)$]\label{SolFPItem4} for any $\varphi$ in $\mathcal{C}^\infty_c(\T^N\times\R^N)$ and any $t\geq 0$,
\begin{equation}\label{SolutionFP}
\begin{array}{lll}
& \displaystyle \langle f(t),\varphi\rangle  &=\ \displaystyle \langle f_\mathrm{in},\varphi\rangle  + \int_0^t \langle f(s), v\cdot\nabla_x \varphi\rangle \dd s + \lambda\sum_{j\geq 0}\int_0^t \left\langle f(s),F_j\cdot\nabla_v \varphi\right\rangle \dd\beta_j(s) \\
&& \displaystyle+\int_0^t\langle f(s),\cQ^*(\varphi)\rangle \dd s +\frac{\lambda^2}{2}\sum\limits_{j\geq 0} \int_0^t \left\langle  f(s),\left(F_j\cdot\nabla_v \right)^2\varphi\right\rangle \dd s, \quad \text{a.s.},
\end{array}
\end{equation}
\end{itemize}
where $\cQ^*$ is the formal adjoint to $\cQ$ defined by \eqref{AdjointQ}. Moreover the solution $f$ is unique and satisfies the estimate
\begin{equation}\label{EEstimateFPsolutions}
\E\|f(t)\|^2_{L^2(\T^N\times\R^N)}\leq e^{tN}\E\|f_\mathrm{in}\|^2_{L^2(\T^N\times\R^N)}
\end{equation}
for all $t\in[0,T]$.
\end{theorem}

\noindent In the foregoing statement, $\mathcal{C}_w([0,T];E)$ denotes functions that are continuous when the normed space $E$ is endowed with its weak topology. See details in Section~\ref{sec:exuniq}.

\medskip

Since our main concern is long-time behavior the proof of Theorem~\ref{thmexFP} is delayed to Appendix~\ref{app:thmexFP}. The proof is rather classical but still quite technical, the hardest part being probably the uniqueness part. It may be worth mentioning that our uniqueness result contains the deterministic case ($\lambda=0$) and with this respect improves on the one contained in \cite[Appendix~A.20]{villani} but is still far from reaching best expected results (uniqueness in spaces allowing for Gaussian growth at infinity) that may be proved by inspecting existence results for the dual equation.

\bigskip

Unfortunately the foregoing statement is essentially useless on large time. Indeed the bound \eqref{EEstimateFPsolutions} is sharp for solutions starting from $f_\mathrm{in}\in L^2(\Omega\times\T^N\times\R^N)$ as is easily seen by considering the case when $\lambda=0$ and $f_\mathrm{in}$ is independent of $x$. In this special case, the dynamics reduces to the evolution generated by $\cQ$ (on spatially homogeneous functions) and it is well-known that a higher localization in the velocity variable is indeed needed to prevent exponential growth in time and an even stronger localization to reach exponential convergence at natural decay rates. See \textit{e.g.} \cite[Appendix~A]{Gallay-Wayne-invariant_manifold} where it is proved that for the corresponding evolution, in the scale of spaces $L^2((1+|v|^2)^{m}\dd v)$, $m=N/2$ is the threshold for boundedness and $m=N/2+1$ is the threshold for convergence at largest possible rate. 

\bigskip

\paragraph{\textbf{Solving \eqref{FP} in weighted spaces}}

Our second result provides the missing localization as it is concerned by the resolution of the Cauchy problem for \eqref{FP} in the weighted space $L^2(\T^N\times\R^N,\dd x\times\cM^{-1}\dd v)$, where 
\begin{equation}\label{def:maxwellian}
\cM(v)=(2\pi)^{-N/2}e^{-|v|^2/2},\quad v\in\R^N,
\end{equation}
is the Max\-wel\-lian distribution on $\R^N$. Maxwellian is the usual terminology in the study of kinetic equations, Gaussian distribution is of course the usual term in probability theory. At any rate, the relevance of $\cM$ here originates in the fact it is a stationary solution to \eqref{FP} in the case $\lambda= 0$ (no stochastic forcing). Note also that the operator $\cQ$ is self-adjoint in the weighted space $L^2(\R^N,\cM^{-1}\dd v)$, as it is already self-adjoint on $L^2(\cM^{-1}\dd v)$ when restricted to spatially homogeneous functions. In an equivalent manner, we will state our conclusions in $L^2$ for the new unknown
$$
g=\cM^{-\frac12}f.
$$
Then $g$ should solve
\begin{equation}\label{eqg}
\left\{
\begin{array}{ll}
& \dd g\ +\ v\cdot\nabla_x g\ \dd t\ +\ \lambda\left(\nabla_v-\dfrac{v}{2}\right)g\ \odot\dd W_t\ =\ Lg\ \dd t\\
& g(0)=g_\mathrm{in}
\end{array} \right.
\end{equation}
with
\begin{equation}\label{defLprem}
Lg\ =\ \Delta_v g\ +\ \left(\dfrac{N}{2}-\dfrac{|v|^2}{4}\right)g,
\end{equation}
if $f$ is solution to \eqref{FP} with initial datum $f_\mathrm{in}=\cM^{\frac12} g_\mathrm{in}$. Note that as expected the operator $L$ is a self-adjoint operator on $L^2(\R^N)$, indeed it is the quantum harmonic oscillator operator. Our second result is the following one.
 
\begin{theorem}\label{thmex}
Suppose that hypothesis $(\ref{noise})$ holds and let 
$$g_\mathrm{in}\in L^2(\Omega;L^2(\T^N\times\R^N)).$$
For any $|\lambda|< 1$, there exists a unique adapted process $(g(t))_{t\geq 0}$ on $L^2(\Omega;L^2(\T^N\times\R^N))$ which satisfies
\begin{itemize}
\item[$(i)$] for any $T>0$, $g\in L^2(\Omega;\mathcal{C}_w([0,T];L^2(\T^N\times\R^N)))$;
\item[$(ii)$] for any $T>0$, $\left(\nabla_v+\dfrac{v}{2}\right)g\in L^2(\Omega\times(0,T); L^2(\T^N\times\R^N))$;
\item[$(iii)$] for any $\varphi$ in $\mathcal{C}^\infty_c(\T^N\times\R^N)$ and any $t\geq 0$,
\begin{equation}\label{Solution}
\begin{array}{lll}
& \displaystyle \langle g(t),\varphi\rangle&=\ \displaystyle \langle g_\mathrm{in},\varphi\rangle  + \int_0^t \langle g(s), v\cdot\nabla_x \varphi\rangle \dd s + \lambda\sum_{j\geq 0}\int_0^t \left\langle g(s),F_j\cdot \left(\nabla_v+\dfrac{v}{2}\right) \varphi\right\rangle \dd\beta_j(s) \\
&& \displaystyle+\int_0^t\langle g(s),L\varphi\rangle \dd s +\frac{\lambda^2}{2}\sum\limits_{j\geq 0} \int_0^t \left\langle  g(s),\left(F_j\cdot \left(\nabla_v+\dfrac{v}{2}\right)\right)^2\varphi\right\rangle \dd s, \quad \text{a.s.}
\end{array}
\end{equation}
\end{itemize}
Moreover for this solution the quantity $\rinf(g):=\iint g\dM \dd x\dd v$ is a.s. constant in time.
\end{theorem}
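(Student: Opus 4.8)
The plan is to prove Theorem~\ref{thmex} by the Galerkin scheme already announced in the introduction, carrying out in order: (a) construction of finite-dimensional approximate solutions, (b) uniform (in the truncation parameter) a~priori energy estimates, (c) passage to the limit via compactness and martingale arguments, and (d) uniqueness together with the conservation of $\rinf$.

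\textbf{Step 1: Galerkin approximation.} For $m\in\N$, project Equation~\eqref{FPbis} (in its It\^o form \eqref{FPBisIto}) onto the finite-dimensional space $E_m=\mathrm{Range}(\Pi_m)$, seeking $g_m(t)=\Pi_m g_m(t)$ solving
\begin{equation*}
\dd g_m + \Pi_m(v\cdot\nabla_x g_m)\,\dd t + \lambda\,\Pi_m\!\Big(\big(D-\tfrac{v}{2}\big)\!\cdot\! g_m\Big)\odot\dd W_t = \Pi_m L g_m\,\dd t,\qquad g_m(0)=\Pi_m g_\mathrm{in}.
\end{equation*}
Since $\Pi_m L=L\Pi_m$ on $E_m$, the coefficients are linear and bounded on the finite-dimensional space $E_m$ (here the commutation rules \eqref{DPI} are what make the truncated Stratonovich/It\^o corrections act within a slightly larger but still finite-dimensional range, which is then re-projected); hence this is a linear SDE in $E_m\cong\R^{d_m}$ with Lipschitz coefficients, and classical SDE theory gives a unique global adapted solution $g_m$ with continuous paths and $\E\sup_{[0,T]}\|g_m(t)\|^2<\infty$.

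\textbf{Step 2: uniform energy estimate.} Apply the It\^o formula to $\|g_m(t)\|^2$. The transport term $\langle g_m, v\cdot\nabla_x g_m\rangle=0$ by skew-adjointness of $A$; the Fokker-Planck term contributes $-2\langle g_m,Lg_m\rangle\,\dd t = 2\|Dg_m\|^2\,\dd t$ via \eqref{fLf}; the It\^o stochastic integral is a martingale with zero expectation; and the It\^o correction together with the quadratic variation of the noise produce a term bounded, thanks to hypothesis \eqref{noise}, by $C\lambda^2\big(\|Dg_m\|^2+\|g_m\|^2\big)\,\dd t$ (one expands $(F_j\cdot D)^2$, integrates by parts, and uses $\{D,D^*\}=N\,\mathrm{Id}$ together with $\sum_j\|F_j\|_\infty^2+\|\nabla_x F_j\|_\infty^2\le1$). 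The key algebraic point is that the \emph{good} dissipation $-2\|Dg_m\|^2$ beats the stochastic correction precisely when $\lambda<1$: choosing the constants carefully one gets, in expectation,
\begin{equation*}
\frac{\dd}{\dd t}\,\E\|g_m(t)\|^2 + (2-C\lambda^2)\,\E\|Dg_m(t)\|^2 \le C\lambda^2\,\E\|g_m(t)\|^2,
\end{equation*}
so that Gr\"onwall yields $\sup_{m}\sup_{[0,T]}\E\|g_m(t)\|^2\le e^{CT}\E\|g_\mathrm{in}\|^2$ and $\sup_m\E\int_0^T\|Dg_m\|^2\,\dd t<\infty$. I expect this step --- tracking the It\^o correction and showing $\lambda<1$ is exactly the threshold at which $2-C\lambda^2>0$ (matching Remark~\ref{lambda-petit}) --- to be the main technical obstacle, since one must be scrupulous about which norm each piece of $(F_j\cdot D)^2$ lands in.

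\textbf{Step 3: passage to the limit.} From the uniform bounds, $(g_m)$ is bounded in $L^\infty(0,T;L^2(\Omega;L^2_{x,v}))$ and $(Dg_m)$ is bounded in $L^2(\Omega\times(0,T);L^2_{x,v})$; extract a subsequence converging weakly-$*$, resp.\ weakly, to limits $g$ and (by closedness of $D$) $Dg$. Testing the approximate equation against a fixed $\varphi\in E_{m_0}$ and passing to the weak limit in each term --- the stochastic integral term converges because $s\mapsto\langle g_m(s),F_j\cdot D\varphi\rangle$ converges weakly in $L^2(\Omega\times(0,T))$ and It\^o integration against a fixed test function is continuous for this topology --- yields that $g$ satisfies the weak formulation \eqref{Solution} for all $\varphi\in E_{m_0}$, hence (since $m_0$ is arbitrary) for all $\varphi$ in $\bigcup_m E_m$, which is dense. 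The weak continuity in time, i.e.\ $g\in C_w([0,T];L^2(\Omega;L^2_{x,v}))$, follows because for each fixed $\varphi$ the right-hand side of \eqref{Solution} is a (continuous modification of a) process continuous in $t$, and the family of such $\varphi$ separates points; this also fixes $g(0)=g_\mathrm{in}$.

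\textbf{Step 4: uniqueness and conservation of $\rinf$.} For uniqueness, let $g^{(1)},g^{(2)}$ be two solutions with the same initial datum and set $h=g^{(1)}-g^{(2)}$; although $h$ need not a~priori lie in $L^2_{\nabla,D}$, one can justify applying the energy identity to $\|h(t)\|^2$ either by testing \eqref{Solution} against the (finite) Galerkin truncations $\Pi_m h$ and passing to the limit, or by a mollification argument, obtaining $\E\|h(t)\|^2\le C\lambda^2\int_0^t\E\|h(s)\|^2\,\dd s$ (the dissipative and martingale terms have favourable sign, exactly as in Step~2 but now with no forcing), so $h\equiv0$ by Gr\"onwall. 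Finally, for the conservation of $\rinf(g)=\iint g\,\dM$, take $\varphi=\dM=q_0\in E_0$ in \eqref{Solution}: then $\langle g,v\cdot\nabla_x\dM\rangle=0$ since $\dM$ is $x$-independent, $\langle g,L\dM\rangle=0$ since $L\dM=0$, $F_j\cdot D\dM=0$ because $D\dM=(\nabla_v+\tfrac v2)\dM=0$, and likewise $(F_j\cdot D)^2\dM=0$; hence $\langle g(t),\dM\rangle=\langle g_\mathrm{in},\dM\rangle$ for all $t\ge0$ almost surely, which is the claimed time-independence of $\rinf(g)$.
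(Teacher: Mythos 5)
Your proposal follows the paper's proof in every structural respect: Galerkin approximation in $E_m$, $L^2$ energy estimate via It\^o's formula, weak compactness and passage to the limit, uniqueness by an energy identity plus Gr\"onwall, and conservation of $\rinf$ by testing against $\dM$. Two details you compress deserve a flag, although neither invalidates the scheme. First, the assertion that ``$\lambda<1$ is exactly the threshold at which $2-C\lambda^2>0$'' only holds because the computation is sharp, not generic: writing the noise contribution as $\lambda^2\sum_j\langle(F_j\cdot D^*)g_m,(F_j\cdot(D+D^*))g_m\rangle\le 2\lambda^2\|D^*g_m\|^2=2\lambda^2\|Dg_m\|^2+2N\lambda^2\|g_m\|^2$ (via \eqref{noise}, $\|Dg_m\|\le\|D^*g_m\|$, and $\{D,D^*\}=N\,\Id$) yields coefficient $2$ in front of $\|Dg_m\|^2$; a loose constant $C$ would not recover $\lambda<1$. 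Second, in the uniqueness step one cannot literally substitute $\varphi=\Pi_m h(t)$ into \eqref{Solution}, since that formulation is for fixed $\varphi\in E_m$ while $\Pi_m h(t)$ is a random, time-dependent element; the paper instead applies It\^o's formula to each mode $|\langle h,e_{k,l}\rangle|^2$, uses that $\mathrm{Re}\big(\overline{\langle h,e_{k,l}\rangle}\,\langle h,v\cdot\nabla_x e_{k,l}\rangle\big)=0$ for the trigonometric--Hermite basis (so the transport term cancels mode by mode), and then sums via Bessel's identity --- the mollification alternative you mention is the right fallback if you want to avoid this modewise bookkeeping.
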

\noindent As follows from the classical properties of the Fokker-Planck operator, condition $(ii)$ may be equivalently written as : for any $T>0$, both $\nabla_vg\in L^2(\Omega\times(0,T)\times\T^N\times\R^N)$ and $v\,g\in L^2(\Omega\times(0,T)\times\T^N\times\R^N)$.

\medskip

As announced the solutions built in Theorem~\ref{thmex} provide better localization properties of $f$ with respect to $v$ but it is subjected to a restriction on the size of $\lambda$ in contrast with Theorem~\ref{thmexFP}. We will use the extra localization property to obtain convergence at exponential rate to an invariant measure. The constraint $|\lambda|<1$ arises to ensure that the localization property is not altered by the stochastic force term of Equation~\eqref{FP}. See also Remark~\ref{rk:lambda1} on the impact of the size of $\lambda$ on localization. 

As it is an important point in our analysis, let us give some technical insight on the role of $\lambda$ in proving existence for \eqref{eqg}. To ensure existence, we need that the random perturbation does not affect too much the dissipation of the operator $L$ so that the equation does not become effectively anti-diffusive. With this respect we emphasize that the singular force term in Equations~\eqref{FPVlasov} or \eqref{eqg} give rise to a second order differential operator with respect to $v$ when written in It\={o} form. For the original problem \eqref{FP}, this second order term has a good structure and is dissipative in $L^2$, \textit{cf.} Equation~\eqref{FPITO}. This accounts for the estimate \eqref{EEstimateFPsolutions} and the fact that there is no restriction on $\lambda$ in Theorem~\ref{thmexFP}. For Equation~\eqref{eqg} however, once put in It\={o} form as in \eqref{FPBisIto}, it appears that the corrective second order term has a structure that is not compatible even with basic energy estimates.

We obtain the existence of solutions to Equation \eqref{eqg} through a stochastic version of standard Galerkin schemes. Precisely, we project Equation \eqref{eqg} on some finite dimensional space. Doing so, we construct a sequence $(g_m)_m$ of approximate solutions to our problem. Then, one has to derive energy estimates on the sequence $(g_m)_m$ in order to take limits in the approximate projected problem. Another implementation of this strategy would likely prove Theorem~\ref{thmexFP}. Yet for comparison we provide a proof of Theorem~\ref{thmexFP} in Appendix~\ref{app:thmexFP} through a regularization of the mild formulation of \eqref{FP}, an approach that is probably even more classical for stochastic partial differential equations.

\bigskip

\paragraph{\textbf{Large-time behavior}}

Our third main result --- and the one that actually motivates our whole analysis --- is about existence, uniqueness and mixing properties of an invariant measure to problem \eqref{eqg}.

\begin{theorem}\label{thminv}
Suppose that hypothesis \eqref{noise} is satisfied. Let $\bar{\rho}\in\R$ and introduce the space
$$
X_{\bar{\rho}}\ :=\ \left\{ g\in L^2(\T^N\times\R^N)\,;\, \<g,\dM\> = \bar{\rho}\right\}\,.
$$
Then there exists $\lambda_0\in(0,1)$ independent on $\bar{\rho}$ such that, for $|\lambda|<\lambda_0$, the problem 
\begin{equation}\label{probm}\tag{$\text{P}_{\bar{\rho}}$}
\left\{
\begin{array}{ll}
& \dd g\ +\ v\cdot\nabla_x g\ \dd t\ +\ \lambda\left(\nabla_v-\dfrac{v}{2}\right)g\ \odot\dd W_t\ =\ Lg\ \dd t\\ [0.5em]
& g(0)=g_{\text{in}}\in X_{\bar{\rho}}
\end{array} \right.
\end{equation}
admits a unique invariant measure $\mu_{\bar{\rho}}$ on $X_{\bar{\rho}}$. Besides, there exist some constants $C\geq 0$, $\kappa>0$ depending on $\bar{\rho}$ and $N$ only, such that
\begin{equation}\label{mixingmu}
\left|\E\Psi(g(t))-\<\Psi,\mu_{\bar{\rho}}\>\right|\leq C e^{-\kappa t}\|g_{\text{in}}\|_{L^2(\T^N\times\R^N)},
\end{equation}
for every $\Psi\colon X_{\bar{\rho}}\to\R$ which is $1$-Lipschitz continuous.
\end{theorem}
Estimate \eqref{mixingmu} gives exponential convergence to the invariant measure $\mu_{\bar{\rho}}$ in the $1$-Was\-ser\-stein distance. Indeed, if $\mathcal{P}_1(X_{\bar{\rho}})$ is the set of Borel probability measures $\nu$ on $X_{\bar{\rho}}$ having finite first moment, that is, such that
$$
\int_{X_{\bar{\rho}}}\|g\|_{L^2(\T^N\times\R^N)}d\nu(g)<+\infty,
$$
then, thanks to the Kantorovitch duality theorem \cite[Theorem~5.10]{VillaniOldNew}, estimate \eqref{mixingmu} reads
$$
W_1(\mu(t),\mu_{\bar{\rho}})\leq C e^{-\kappa t}\|g_{\text{in}}\|_{L^2(\T^N\times\R^N)},
$$
where $\mu(t)$ is the law of $g(t)$ and $W_1$ is the $1$-Wasserstein distance on $\mathcal{P}_1(X_{\bar{\rho}})$:
$$
W_1(\mu,\nu)=\inf\left\{\iint_{X_{\bar{\rho}}\times X_{\bar{\rho}}}\|f-g\|_{L^2(\T^N\times\R^N)} d\pi(f,g)\right\},
$$
where the infimum is with respect to probability measures $\pi$ on $X_{\bar{\rho}}\times X_{\bar{\rho}}$ having first and second marginals $\mu$ and $\nu$ respectively. \bigskip

Some possibly growing-in-time uniform energy estimates are sufficient to prove existence and uniqueness of solutions to \eqref{eqg}. However to prove existence and uniqueness of an invariant measure for problem \eqref{eqg} we prove and use as our main tool adapted hypocoercive (and hypoelliptic) estimates. Therefore let us say a few words about the theory of hypocoercivity\footnote{See also Appendix~\ref{app:ellipticity} for an example of classical coercivity and ellipticity.}, as coined by Villani in \cite{villani}, in a simple context. It is particularly well-suited to providing rates of convergence towards equilibrium of solutions to kinetic collisional models. For instance, consider the following class of kinetic models
\begin{equation}\label{modelhypo}
\partial_tf + v\cdot\nabla_xf = Qf,
\end{equation}
where $Q$ is a linear collisional operator which acts on the velocity variable only, and choose some weighted-$L^2$ space $H_v$ such that $Q$ is symmetric on $L^2_x\otimes H_v$.  Also suppose that, denoting by $\Pl$ the orthogonal projection on $\textrm{ker}(Q)$, the following (local-in-space) weak coercivity assumption holds
$$\<Qh,h\> \leq -c \|h - \Pl h\|^2$$
for some $c>0$. This implies that $Q$ has a spectral gap when considered as acting on $H_v$, that is on functions homogeneous in space. The class of operators we have just introduced includes, among others, the cases of linearized Boltzmann, classical relaxation, Landau and Fokker-Planck equations. Note that while the global steady states of these models do belong to $\text{ker}(Q)$, the foregoing kernel is not reduced to Maxwellians so that the above weak coercivity fails to yield convergence to equilibrium. Introducing the global projection $\bar\Pi$ on $\text{ker}(-v\cdot\nabla_x+Q)$ defined by
$$
\bar\Pi h=\int_{\T^N} \Pl h(x,\cdot)\dd x.
$$
we first remark that, if $f$ is a solution to Equation \eqref{modelhypo}, $\bar\Pi f(t) = \bar\Pi f(0)$ is independent of time. Then the piece of information that stems from hypocoercivity theory is the exponential damping of the solution $f$ to equilibrium $\bar\Pi f(0)$
$$\|f(t)-\bar\Pi f(0)\|_{\mathcal{H}}\leq Ke^{-\tau t},\quad t\geq 0,$$
in some Sobolev space $\mathcal{H}$ built on $L^2_x\otimes H_v$. The key-point is that (local-in-velocity) weak coercivity estimates afforded by commutators of $-v\cdot\nabla_x$ and $Q$ may be incorporated in an energy estimate so as to ensure a full control of $\|f(t)-\bar\Pi f(t)\|_{\mathcal{H}}$. We refer the reader to the memoir of Villani \cite{villani} and references therein and also to the paper of Mouhot and Neumann \cite{mouhotneumann} that studies the convergence to equilibrium for many kinetic models including Fokker-Planck equations. Our approach to hypoellipticity is global and mimic hypocoercive estimates as in \cite{villani}. In the case of the deterministic Fokker-Planck equation \eqref{eqg} where $\lambda = 0$, the kernel of $-v\cdot\nabla_x+L$ is spanned by the function $\cM^{\frac{1}{2}}$ and
$$\bar\Pi g = \rho_{\infty}(g) \cM^{\frac{1}{2}},$$
where $\rho_{\infty}(g):=\iint g(t)\cM^{\frac{1}{2}}\,\dd x\,\dd v = \iint g(0)\cM^{\frac{1}{2}}\,\dd x\,\dd v$ (this quantity being time independent). And one can prove (see \cite[Section 5.3]{mouhotneumann}) an exponential damping for the quantity $g(t) - \rho_{\infty}(g) \cM^{\frac{1}{2}}$ in a weighted $H^1(\T^N\times\R^N)$ norm. \medskip

\noindent In the present paper, we establish hypocoercive estimates on the Fokker-Planck model \eqref{eqg}, that involves a perturbation by a random force. To handle the stochastic term we incorporate the corrections coming from the It\={o} formula in the roadmap of the proof of Mouhot and Neumann \cite{mouhotneumann}. By doing so we achieve 
\begin{equation}\label{HypoIntro}
\E\|g(t)\|_{L^2_{\nabla,D}}^2  \leq\ Ce^{-ct}\E\|g_\mathrm{in}\|_{L^2_{\nabla,D}}^2 + K \E|\rho_\infty(g)|^2, \quad t\geq 0,
\end{equation}
where $L^2_{\nabla,D}$ is a suitable weighted version of $H^1(\T^N\times \R^N)$ Sobolev space (see below \eqref{gradspaces} for the precise definition). 
In particular, any two solutions of the problem \eqref{eqg} $g_1$ and $g_2$ with respective initial conditions $g_{\text{in},1}$ and $g_{\text{in},2}$ such that $\rho_\infty(g_{\text{in},1})=\rho_\infty(g_{\text{in},1})$ meet exponentially fast in infinite time. A priori the latter is only guaranteed when $g_{\text{in}}$ belongs to $L^2_{\nabla,D}$. However following the same lines we also prove a hypoelliptic regularizing effect showing that the flow instantaneously sends $L^2$ to $L^2_{\nabla,D}$. \medskip

\noindent The existence of an invariant measure for problem~\eqref{eqg} follows then almost readily from hypoellipticity and hypocoercivity. Indeed existence is obtained from compactness\footnote{See Appendix~\ref{app:invariant}.} of time-averages that stems from compact embedding of $L^2_{\nabla,D}$ in $L^2_{x,v}$, instantaneous regularization and uniform-in-time bounds in $L^2_{\nabla,D}$. Concerning uniqueness and mixing, they stem from exponential convergence of stochastic trajectories.\medskip

\paragraph{\textbf{Further comments.}} Though hypocoercive and hypoelliptic estimates play a crucial role in our analysis, we caution the reader that the constraint on $|\lambda|$ is more intimately tied to issues of localization in the velocity variable. As we already pointed out, even for the Fokker-Planck operator acting on functions constant in space --- an elliptic operator --- some localization is needed to obtain global-in-time estimates. Yet choosing a framework ensuring localization necessarily breaks the skew-symmetric structure of the Stratonovich transport term, resulting in quasilinear contributions to energy estimates imposing some constraint on $|\lambda|$ even for the existence part of the argument.

Control on velocity spreading is indeed an ubiquitous issue in the analysis of (non relativistic) kinetic models. To some extent, for most problems concerning collisionless models this is actually the key issue. Collisional mechanisms, as encoded here by the Fokker-Planck operator, usually damp large velocities thus offering better localization. Yet as is readily apparent on characteristic equations~\eqref{XVsto} forcing terms may counterbalance collisional effects and lead to a loss of localization in large time. The constraint on noise intensity in Theorems~\ref{thmex} and~\ref{thminv} precisely enforces that collisions are the dominant mechanism acting on velocity localization.\medskip

\paragraph{\textbf{Plan of the paper.}} The proof of Theorem~\ref{thmexFP} is provided in Appendix~\ref{app:thmexFP}. In Section~\ref{sec:existsuniqueg} we introduce our framework with more precision and prove Theorem~\ref{thmex}. Hypoelliptic and hypocoercive estimates on the solution $g$ to \eqref{eqg} are proved in Section~\ref{sec:hypopo}, see Theorem~\ref{thmhypo}. They are applied in Section~\ref{sec:invmeas} to the proof of Theorem~\ref{thminv}. In Appendix~\ref{app:probaVFP} we recall the classical probabilistic interpretation of the Vlasov-Fokker-Planck operator and use it to prove some well-known estimates used to prove Theorem~\ref{thmexFP} and provide an explicit example, already summarized in Proposition~\ref{explicitmu}, showing that some restriction on $|\lambda|$ is indeed required in the framework of Theorems~\ref{thmex} and~\ref{thminv}. In Appendix~\ref{app:background} we also gather some basic background material that may be skipped by the expert reader but may be useful to the reader unfamiliar with some of the crucial underlying concepts.

\section{Existence and uniqueness of solutions}\label{sec:existsuniqueg}

\subsection{Preliminaries}

Prior to entering into the heart of our analysis we make precise our notational convention and recall some well-known facts concerning $L$.\smallskip

\noindent \textbf{It\={o} form.} To study the Cauchy problem \eqref{eqg}, we will work on its It\={o} form
\begin{equation}\label{FPBisIto}
\begin{array}{rcl}
\displaystyle \dd g + v\cdot\nabla_x g\ \dd t&+&\displaystyle
\lambda \left(\nabla_v-\dfrac{v}{2}\right)g \cdot \dd W_t - Lg\ \dd t\\[0.5em]
&=&\displaystyle
\frac{ \lambda^2}{2}\sum\limits_jF_j\cdot\left(\nabla_v-\dfrac{v}{2}\right)\left(F_j\cdot\left(\nabla_v-\dfrac{v}{2}\right)g\right)\dd t
\end{array}
\end{equation}
where we recall that 
$$
Lg\ =\ \Delta_v g\ +\ \left(\dfrac{N}{2}-\dfrac{|v|^2}{4}\right)g
$$
and that we always assume~\eqref{noise}.

To derive \eqref{FPBisIto} from \eqref{eqg}, we use the following computational rule. Let $h$ and $A$ be semi-martingales given by
$$
\begin{array}{rcl}
\displaystyle
\dd h&=&\displaystyle
\cF(h)\ \dd t\ +\ \cG(h)\odot\dd W\,,\\
\displaystyle
\dd A&=&\displaystyle
\cA(h)\ \dd t\ +\ \cB(h)\odot\dd W\,.
\end{array}
$$
Then we have\footnote{See also Appendix~\ref{app:ItoInt}.} \cite[Section~20.4]{SchillingPartzsch2014}
$$
\dd A\ =\ \cF(h)\ \dd t\ +\ \dfrac12\sum_{j\geq 0}F_j\cdot d\cB(h)(F_j\cdot\cG(h))\ \dd t\ +\ \cB(h)\cdot\dd W.
$$
Applied to our original formulation, this gives the following I\={o} form of \eqref{FP}
\begin{equation}\label{FPITO}
\dd f\ +\ v\cdot\nabla_x f\ \dd t\ +\ \lambda \nabla_vf\cdot\dd W_t\ =\ \cQ(f)\ \dd t+\sum_{j\geq 0}(F_j\cdot\nabla_v)^2 f
\end{equation}
where $\cQ$ is given by \eqref{defQQ}.

\noindent \textbf{Functional Spaces.} In the following, we denote by $\langle\cdot,\cdot\rangle$ and $\|\cdot\|$ respectively the scalar product and the norm of $L^2_{x,v}:=L^2(\T^N\times\R^N)$. For any Hilbert space $H$ and any $T>0$, we denote by $\mathcal{C}_w([0,T];H)$ the space of functions on $[0,T]$ with values in $H$ that are continuous for the weak topology of $H$. Let us introduce the differential operators
$$
D=\nabla_v+\frac{v}{2},\quad D^*=-\nabla_v+\frac{v}{^2},
$$
where $D^*$ is the formal adjoint of $D$ component-wise, \textit{i.e.} $D^*_k=(D_k)^*$, $k=1,...,N$. Note that, for $f$ sufficiently smooth and localized,
\begin{equation}
\label{Df}
\left\|Df\right\|^2\ =\
\left\|\nabla_vf\right\|^2+\frac{1}{4}\left\|vf\right\|^2\ -\ \dfrac{N}{2}\|f\|^2
\end{equation} 
and
\begin{equation}
\label{Dstarf}
\left\|D^*f\right\|^2\ =\
\left\|\nabla_vf\right\|^2+\frac{1}{4}\left\|vf\right\|^2\ +\ \dfrac{N}{2}\|f\|^2.
\end{equation} 
We introduce the space 
$$
L^2_D=\{f\in L^2(\R^N);\,Df\in L^2(\R^N)\}=\{f\in L^2(\R^N);\,D^*f\in L^2(\R^N)\}
$$
and then define the spaces 
\begin{equation}\label{gradspaces}
L^2_{x,D}=L^2(\T^N;L^2_D)\,,\quad L^2_{\nabla,D}=\{f\in L^2_{x,D}\,;\quad\nabla_x f\in L^2_{x,v}\},
\end{equation}
equipped respectively with norms
$$
\|f\|^2_{L^2_{x,D}}=\|D^*f\|^2,\quad \|f\|^2_{L^2_{\nabla,D}}=\|D^*f\|^2+\|\nabla_x f\|^2.
$$
\textbf{Fokker-Planck Operator.} We introduce the transport operator $A=  v\cdot\nabla_x$ which is skew-adjoint, that is which satisfies $A^*=-A$. Concerning the Fokker-Planck operator $L$, we gather hereafter some of its properties. Since we have derived it from $\cQ$, we abuse terminology and call $L$ itself a Fokker-Planck operator. However, as already mentioned, $L$ is the quantum harmonic oscillator, and operators $D^*$ and $D$ below are associated creation and annihilation operators. For this reason, the properties recalled below are well-known in the mathematical physics literature and may be found for instance in \cite[Appendix to V.3]{ReedSimonI} or \cite[Section~1.3]{Helffer}.

First, we recall the expression
$$
Lf\ =\ \Delta_v f\ +\ \left(\dfrac{N}{2}-\dfrac{|v|^2}{4}\right)f.
$$
Alternatively $L$ is also given by
\begin{equation}\label{LDstarD}
Lf\ =\ -\sum_k D_k^*D_kf
\ =\ Nf-\sum_k D_kD_k^*f,
\end{equation}
which we will denote $L = -D^*D = N\text{Id} - DD^*$ for short. From \eqref{LDstarD} follows immediately the dissipative bound
\begin{equation}
\label{fLf}
-\left\langle f,Lf\right\rangle\ =\ \left\|Df\right\|^2.
\end{equation}
Recall that the operator $\cQ$ is defined by \eqref{defQQ}. To factor out Gaussians from eigenfunctions, since $\cQ$ has divergence structure we temporarily consider the formal adjoint of $\cQ$ on $L^2(\R^N)$, that is
\begin{equation}\label{AdjointQ}
\cQ^*\colon f\mapsto \Delta_v f- v\cdot\nabla_v f.
\end{equation}
The operator $\cQ^*$ is self-adjoint on $L^2(\R^N,\gamma)$, where $\gamma$ is the Gaussian measure with density $\cM$ with respect to the Lebesgue measure on $\R^N$. For $j\in\N^N$, Hermite polynomials
\begin{equation}\label{defHj}
H_j(v)=\frac{(-1)^{|j|}}{\sqrt{j!}}\cM^{-1}\partial_v^j(\cM),
\end{equation}
where
$$
|j|=j_1+\ldots+j_N,\quad j!=j_1!\cdots j_N!,\quad \partial_v^j=\partial_{v_1}^{j_1}\cdots\partial_{v_N}^{j_N},
$$
form an orthonormal basis of $L^2(\R^N,\gamma)$ of eigenvectors of $\cQ^*$ 
$$
\cQ^*H_j=-|j|H_j.
$$
The operator $L$ is related to the operator $\cQ^*$ by the formula $Lf=\dM\cQ^*(\cM^{-\frac12}f)$. It follows that, setting $q_j=\dM H_j$, we obtain a Hilbert basis of $L^2(\R^N)$ constituted of eigenvectors of $L$ associated with eigenvalues $-|j|$. There is a compact expression of $q_j$: using the formula $\partial_v^j(\dM f)=(-1)^{|j|}\cM^{\frac12} \left[D^*\right]^j f$ (which can be proved by recursion on $|j|$), and the definition \eqref{defHj} of Hermite polynomials, we obtain
\begin{equation}\label{exprqj}
q_j=\frac{1}{\sqrt{j!}}\left[D^*\right]^j \dM.
\end{equation}
The formula \eqref{exprqj} gives in particular the first identity in the following formulas
\begin{equation}\label{translationqj}
D^*_k q_j=\sqrt{j_k+1}\ q_{j+e_k},\qquad D_k q_j=\sqrt{j_k}\ \mathbf{1}_{j_k>0}\ q_{j-e_k},
\end{equation}
for $j\in\N^N$, $k\in\{1,\ldots,N\}$. The formula for $D_k q_j$ is obtained by duality, for instance by computing the $l$-th coefficients $\<D_k q_j,q_l\>=\<q_j,D_k^* q_l\>$.\medskip

\noindent \textbf{Eigenspaces.} Let $(p_k)_{k\in\Z^N}$ denote the standard trigonometric Hilbert basis of $L^2(\T^N)$, explicitly given by $p_k(x)=e^{2\pi i k\cdot x}$. In particular it is formed by normalized eigenfunctions for the Laplacian $-\Delta_x$. Remember that $(q_j)_{j\in\N^N}$ is the spectral Hilbert basis for the Fokker-Planck operator $L$ on $L^2(\R^N)$ introduced above. We define the Hilbert basis $(e_{k,l})_{(k,l)\in \Z^N\times\N^N}$ of $L^2_{x,v}$ by 
$$e_{k,l}(x,v):=p_k\otimes q_l (x,v)=p_k(x)q_l(v),\quad (k,l)\in \Z^N\times\N^N,\,x\in\T^N,\,v\in\R^N.$$

For any $(k_0,l_0)\in (\N\cup\{\infty\})^2$, we set 
$$
E_{k_0,l_0}\ :=\ \mathrm{Closure}_{L^2_{x,v}}\big(\mathrm{Span}\ \{\ e_{k,l}\ ; |k|\leq k_0\ \textrm{and}\ |l|\leq l_0\ \}\big)\ 
$$ 
and introduce $\Pi_{k_0,l_0}$ the $L^2_{x,v}$ orthogonal projection on $E_{k_0,l_0}$. When $k_0=l_0$, we simplify notation to $E_{k_0}$ and $\Pi_{k_0}$. In particular $\bar\Pi=\Pi_{0,0}$ and $\Id=\Pi_{\infty,\infty}$. By \eqref{translationqj}, we have the commutation rules
\begin{equation}\label{DPI}
\Pi_m D^*=D^*\Pi_{m,m-1},\quad D\Pi_m=\Pi_{m,m-1}D
\end{equation}
for all $m\geq 1$. These identities will be used to derive hypocoercive estimates on the ap\-proxi\-ma\-te Galerkin solution to \eqref{eqg}.\medskip

\noindent Let us also introduce the orthogonal projector $\Pl=\Pi_{\infty,0}$ on $L_x^2\otimes\mathrm{Span}\{q_0\}$
$$
\Pl(f)(x,v)\ =\ \langle\dM,f(x,\cdot)\rangle_{L^2_v(\R^N)}\ \dM(v),\qquad \Pl^\bot=I-\Pl.
$$
Then, we have
\begin{equation}\label{Ldissip}
-\left\langle f,Lf\right\rangle\geq \|\Pl^\bot f\|^2.
\end{equation}
Using \refe{fLf}, we then deduce from \refe{Ldissip} that
\begin{equation}\label{fDpi}
\|f\|^2\leq \|\Pl f\|^2+\|Df\|^2.
\end{equation}
Finally, in the sequel, we denote by $\{T,T'\}:=TT'-T'T$ the commutator of two operators $T$ and $T'$. We point out that one readily shows the following algebraic identities
$$
\{D,A\} = \nabla_x, \qquad \{D,D^*\} = N\text{Id},
$$
and stress that the former identity is the cornerstone of both our hypocoercive and hypoelliptic estimates.

\subsection{The Galerkin scheme}

We are now ready to prove the existence part of Theorem~\ref{thmex}. To do so, we use a Galerkin projection method. Here we project Equation \eqref{eqg} onto the finite dimensional space $E_m$ and seek a solution of the projected equation valued in this finite-dimensional subspace, then we take the limit $m\to\infty$ when this finite subset $E_m$ increases up to the whole Hilbert space $L^2$. 

To start with we prove the existence of an approximate solution $g_m : [0,T]\times\Omega\to E_m$ to a projected version of $\eqref{eqg}$ in the following result.

\begin{proposition}\label{p:exGalerkin}
Suppose that hypothesis $(\ref{noise})$ holds and let $g_\mathrm{in}\in L^2(\Omega;L^2_{x,v})$. 
For all $m\geq 0$ and any $T>0$, there exists a unique adapted process $g_m\in \mathcal{C}([0,T];L^2(\Omega;E_m))$ satisfying, for all $t\in [0,T]$, for all $\varphi\in E_m$,
\begin{equation}\label{Galerkin}
\begin{array}{lll}
& \displaystyle \langle g_m(t),\varphi\rangle  &=\ \displaystyle \langle \Pi_m g_\mathrm{in},\varphi\rangle  + \int_0^t \langle g_m(s), v\cdot\nabla_x \varphi\rangle \dd s + \lambda\sum_{j\geq 0}\int_0^t \left\langle g_m(s),F_j\cdot D \varphi\right\rangle \dd\beta_j(s) \\
&& \displaystyle+\int_0^t\langle g_m(s),L\varphi\rangle \dd s +\frac{\lambda^2}{2}\sum\limits_{j\geq 0} \int_0^t \left\langle  g_m(s),\left(F_j\cdot D\right)^2\varphi\right\rangle \dd s, \quad \text{a.s.}
\end{array}
\end{equation}
Moreover, if $|\lambda|< 1$, then
\begin{equation}\label{uniform-Galerkin}
\frac12\max_{t\in[0,T]} e^{-2N\lambda^2\,t}\E\|g_m(t)\|^2\ +\ (1-\lambda^2)\ \int_0^T e^{-2N\lambda^2\,t}\E\|D\,g_m(t)\|^2\dd t
\ \leq\ \frac12\E\|g_\mathrm{in}\|^2\,.
\end{equation}
\end{proposition}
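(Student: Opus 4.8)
I would establish the two statements separately.

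\emph{Existence and uniqueness.} Since the constraints $|k|\le m$ and $|l|\le m$ leave only finitely many $(k,l)\in\Z^N\times\N^N$, the space $E_m$ is finite-dimensional, and writing $g_m(t)=\sum_{|k|\le m,\,|l|\le m}c_{k,l}(t)\,e_{k,l}$ turns \eqref{Galerkin} into a linear It\^o SDE $\dd c=\mathcal B\,c\,\dd t+\sum_j\Sigma_j\,c\,\dd\beta_j$ for the coordinate vector $c(t)\in\R^{\dim E_m}$, with $c(0)$ the coordinates of $\Pi_m g_\mathrm{in}$. Reading the matrices off \eqref{Galerkin}, the drift $\mathcal B$ collects $-\Pi_m A$, $\Pi_m L$ and $\tfrac{\lambda^2}{2}\sum_j\Pi_m(F_j\cdot D^*)^2$, and $\Sigma_j=\lambda\,\Pi_m(F_j\cdot D^*)$ --- using $(F_j\cdot D)^*=F_j\cdot D^*$, valid because $F_j$ depends on $x$ only and so commutes with each $D_k^*$. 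Because $E_m$ is finite-dimensional, $D$ and $D^*$ restrict to bounded operators on it, so $\|(F_j\cdot D^*)^k g_m\|\le C_m\|F_j\|_\infty^k\|g_m\|$ for $k=1,2$; then \eqref{noise}, in the form $\sum_j\|F_j\|_\infty^2\le1$, guarantees $\sum_j\|\Sigma_j\|^2<\infty$ --- exactly what is needed for the series of stochastic integrals $\sum_j\Sigma_j c\,\dd\beta_j$ against the $\Gamma$--Wiener process to converge --- and the operator-norm convergence of the drift series. The coefficients being linear, hence globally Lipschitz, classical SDE theory provides a unique adapted, mean-square continuous solution $g_m\in C(0,T;L^2(\Omega;E_m))$ of \eqref{Galerkin}; uniqueness also follows from the energy computation below applied to the difference of two solutions.

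\emph{The a priori estimate.} I would apply It\^o's formula to $t\mapsto\tfrac12 e^{-2N\lambda^2 t}\|g_m(t)\|^2$ and take expectations (removing the martingale term, after a routine localization if $g_\mathrm{in}$ is merely square-integrable). Differentiating the weight yields $-N\lambda^2 e^{-2N\lambda^2 t}\E\|g_m\|^2$. In the drift, the transport term gives $-\langle g_m,Ag_m\rangle=0$ by skew-adjointness of $A$, and the Fokker--Planck term gives $\langle g_m,Lg_m\rangle=-\|Dg_m\|^2$ by \eqref{fLf}. The remaining, stochastic, contributions --- the It\^o correction appearing in the drift of \eqref{Galerkin}, plus the quadratic variation --- combine into $\tfrac{\lambda^2}{2}\sum_j\big(\langle g_m,(F_j\cdot D^*)^2 g_m\rangle+\|\Pi_m(F_j\cdot D^*)g_m\|^2\big)$. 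Writing $C_j:=F_j\cdot D^*$ (so $C_j^*=F_j\cdot D$ and $C_j+C_j^*=F_j\cdot(D^*+D)=F_j\cdot v$), one has $\langle g_m,C_j^2 g_m\rangle+\|C_j g_m\|^2=\langle(F_j\cdot v)g_m,(F_j\cdot D^*)g_m\rangle$, and a single integration by parts in $v$ --- licit because elements of $E_m$ are smooth and rapidly decaying in $v$ --- bounds the right-hand side by $\tfrac12\|F_j\|_\infty^2(\|g_m\|^2+\|vg_m\|^2)$. Using $\|\Pi_m C_j g_m\|\le\|C_j g_m\|$, summing over $j$ via \eqref{noise}, and invoking \eqref{Df} in the form $\tfrac14\|vg_m\|^2\le\|Dg_m\|^2+\tfrac N2\|g_m\|^2$, the stochastic contributions are at most $\lambda^2\|Dg_m\|^2+(\tfrac14+\tfrac N2)\lambda^2\|g_m\|^2$. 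Collecting everything, the $\|g_m\|^2$ terms are dominated by the weight (here $N\ge1$ is used) and one is left with
\[
\frac{\dd}{\dd t}\Big(\tfrac12\,e^{-2N\lambda^2 t}\,\E\|g_m(t)\|^2\Big)\ \le\ -(1-\lambda^2)\,e^{-2N\lambda^2 t}\,\E\|Dg_m(t)\|^2 .
\]
Integrating on $[0,t]$, the quantity $\tfrac12 e^{-2N\lambda^2 t}\E\|g_m(t)\|^2+(1-\lambda^2)\int_0^t e^{-2N\lambda^2 s}\E\|Dg_m(s)\|^2\,\dd s$ is nonincreasing, hence bounded for every $t\in[0,T]$ by its value $\tfrac12\E\|\Pi_m g_\mathrm{in}\|^2\le\tfrac12\E\|g_\mathrm{in}\|^2$ at $t=0$, which yields \eqref{uniform-Galerkin}.

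\emph{The main obstacle.} Everything is routine except the stochastic terms. The It\^o correction and the quadratic variation are \emph{each} genuinely of second order in $D^*$; the point is not to estimate them separately but to keep them together, so that the identity $D^*+D=v$ and one integration by parts in $v$ collapse their sum to the zeroth-order expression $\tfrac12\|F_j\|_\infty^2(\|g_m\|^2+\|vg_m\|^2)$. Only after this can the residual $\|vg_m\|^2$ be traded against the dissipation $\|Dg_m\|^2$ furnished by $L$, at the price of a factor $\lambda^2$ --- which is the origin of both the coefficient $1-\lambda^2$ and the restriction $\lambda<1$ (cf.\ Remark~\ref{lambda-petit}). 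Uniform control of the infinite sum over noise modes throughout, needed both to define the Galerkin SDE and to close the estimate, rests on hypothesis \eqref{noise}.
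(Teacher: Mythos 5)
Your proposal is correct and mirrors the paper's argument almost step by step: a finite-dimensional linear It\^o SDE with globally Lipschitz coefficients for the coordinate vector gives existence and uniqueness, and the energy estimate rests on the same cancellations ($A$ skew-adjoint, $\langle g_m,Lg_m\rangle=-\|Dg_m\|^2$) and on recombining the It\^o drift correction with the quadratic variation into the single expression $\tfrac{\lambda^2}{2}\sum_j\langle (F_j\cdot D^*)g_m,(F_j\cdot v)g_m\rangle$ via $D+D^*=v$. Where you part ways with the paper is only in how this last expression is controlled: the paper applies Cauchy--Schwarz directly, using $\|Dg_m\|\le\|D^*g_m\|$ to bound it by $\lambda^2\|D^*g_m\|^2$ and then invokes $\|D^*f\|^2=\|Df\|^2+N\|f\|^2$, whereas you perform one further integration by parts in $v$ to collapse the cross term to a zeroth-order quantity and then trade $\tfrac14\|vg_m\|^2$ against $\|Dg_m\|^2+\tfrac N2\|g_m\|^2$ via \eqref{Df}. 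Both routes land on a coefficient $\lambda^2$ in front of $\|Dg_m\|^2$ (hence the same restriction $\lambda<1$ and the factor $1-\lambda^2$), and your version even yields the slightly sharper constant $(\tfrac14+\tfrac N2)\lambda^2\le N\lambda^2$ on the $\|g_m\|^2$ term --- harmless, since the weight $e^{-2N\lambda^2 t}$ is chosen to absorb the paper's looser $N\lambda^2$. Your handling of the projector $\Pi_m$ (keeping it only in the quadratic-variation term and dropping it by $\|\Pi_m\cdot\|\le\|\cdot\|$) is also consistent with what the paper does implicitly. The only cosmetic difference is that you build the exponential weight into the functional before applying It\^o, while the paper first derives the bare differential inequality \eqref{EEmm} and introduces the weight afterwards; these are equivalent.
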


\textbf{Proof of Proposition~\ref{p:exGalerkin}.} For $g_m\in \mathcal{C}([0,T];L^2(\Omega;E_m))$, Equations \eqref{Galerkin} are equivalently written --- in terms of the coefficients $d_{k,l}=\langle g_m,e_{k,l}\rangle$, $|k|\leq m$ and $|l|\leq m$, of $g_m$ --- as a finite-dimensional It\={o} system with globally Lipschitz coefficients (as it is linear and finite-dimensional). It follows then from standard arguments that there exists a unique adapted and continuous process $g_m\in \mathcal{C}([0,T];L^2(\Omega;E_m))$ satisfying \eqref{Galerkin}.  Assume now $|\lambda|<1$. To derive the uniform bound \eqref{uniform-Galerkin}, we multiply \eqref{Galerkin} by $d_{k,l}$ and sum over $k$ and $l$ to obtain  
\begin{align}
\dfrac12\dfrac{\dd}{\dd t}\E\|g_m\|^2+\E\|D g_m\|^2&=\displaystyle
\lambda^2\,\dfrac12\E\sum_{j} \< (F_j\cdot D^*)^2g_m,g_m\>+\< (F_j\cdot D^*)g,(F_j\cdot D^*)g_m\>\nonumber\\
&=\displaystyle
\lambda^2\,\E\sum_{j} \< (F_j\cdot D^*)g_m,(F_j\cdot \frac{D+D^*}{2})g_m\>\nonumber\\
&\leq \lambda^2\,\E\|D^*g_m\|^2\ \leq\ \lambda^2\,\E\|Dg_m\|^2+N\lambda^2\E\|g_m\|^2,\label{EEmm}
\end{align}
from which the bound follows since \eqref{Galerkin} implies $g_m(0)=\Pi_m g_\mathrm{in}$ and therefore $\E\|g_m(0)\|^2\leq\E\|g_\mathrm{in}(0)\|^2$. Here above we have used both $\|Df\|\leq \|D^*f\|$ and $\|D^*f\|^2=\|Df\|^2+N\|f\|^2$.
\qed\bigskip

\noindent Obviously, alternatively we may view $g_m$ as belonging to $\mathcal{C}([0,T];L^2(\Omega;L^2_{x,v}))$ and satisfying
\begin{equation}\label{GalerkinBis}
\dd g_m + \Pi_m(v\cdot\nabla_x g_m)\dd t -\lambda \Pi_m (D^* g_m \odot\dd W_t) = L g_m \dd t,
\end{equation}
with initial condition
$$
g_m(0) = \Pi_mg_{\text{in}}.
$$
This does imply that for any $t$, a.s. $\Pi_m g_m(t)=g_m(t)$ hence $g_m(t)\in E_m$.

\subsection{Proof of Theorem \ref{thmex}}\label{sec:exuniq}
\noindent In this section, we prove Theorem \ref{thmex}. \medskip

{\em Limit point.} Let $T>0$. We use estimate \eqref{uniform-Galerkin} to obtain uniform bounds on $g_m$ in $L^{\infty}(0,T;L^2(\Omega;L^2_{x,v}))$ and on $Dg_m$ in $L^2(\Omega\times(0,T);L^2_{x,v})$ by some quantities depending on $N$, $T$, $\lambda$ and the norm $\E\|g_{\text{in}}\|^2$. As a consequence, $(g_m)_m$ admits a subsequence (still denoted $(g_m)_m$ for simplicity) such that 
$$
g_m \rightharpoonup g\mbox{ in }L^2(\Omega\times(0,T);L^2_{x,v})
$$
where $g,\,Dg\in L^2(\Omega\times(0,T);L^2_{x,v})$. \medskip

{\em Time continuity.} 
We need to upgrade the foregoing convergence to convergence in the space $L^2(\Omega;\mathcal{C}_w([0,T];L^2_{x,v}))$. This follows from the fact that the equation provides uniform bounds in $L^2(\Omega;\mathcal{C}^\alpha([0,T];X^*))$ for some $\alpha>0$ and some space $X$ continuously embedded and dense in $L^2_{x,v}$, that may be combined with uniform bounds in $L^2(\Omega;\mathcal{C}([0,T];L^2_{x,v}))$.

To be explicit, let $X$ denote the Banach space of elements $\varphi\in L^2_{x,v}$ with finite norm
$$
\|\varphi\|_X=\left[\iint_{\T^N\times\R^N} (1+|v|^2)\left(|\varphi(x,v)|^2+|\nabla_{x,v}\varphi(x,v)|^2+|\Delta_v\varphi(x,v)|^2\right) \dd x \dd v\right]^{1/2}.
$$
By \eqref{EEmm}, we have 
\begin{equation}\label{supEgm}
\sup_{t\in[0,T]}\E\|g_m(t)\|^2\leq e^{2N\lambda^2 T}\E\|\Pi_m g_\mathrm{in}\|^2=:C_T\E\|\Pi_m g_\mathrm{in}\|^2.
\end{equation}
This implies the following series of estimates. First
\begin{align}
\E\left|\int_\sigma^t \langle g_m(s), v\cdot\nabla_x \varphi\rangle \dd s\right|^2
&\leq |t-\sigma|^2\sup_{s\in[0,T]}\E\|g_m(s)\|^2\|\varphi\|_X^2\nonumber\\
&\leq |t-\sigma|^2 C_T\E\|\Pi_m g_\mathrm{in}\|^2\|\varphi\|_X^2.\label{Cw1}
\end{align}
Similarly, we have
\begin{equation}\label{Cw2}
\E\left|\int_\sigma^t \langle g_m(s),L\varphi\rangle \dd s\right|^2\leq |t-\sigma|^2 C_T\E\|\Pi_m g_\mathrm{in}\|^2\|\varphi\|_X^2,
\end{equation}
and, using the hypothesis \eqref{noise},
\begin{equation}\label{Cw3}
\E\left|\int_\sigma^t \sum\limits_{j\geq 0} \left\langle  g_m(s),\left(F_j\cdot D\right)^2\varphi\right\rangle \dd s\right|^2\leq |t-\sigma|^2 C_T\E\|\Pi_m g_\mathrm{in}\|^2\|\varphi\|_X^2.
\end{equation}
By the Burkholder-Davis-Gundy Inequality, \cite[Lemma~7.7]{daprato}, and \eqref{noise}, we can also estimate as follows the time increments in the stochastic integral
\begin{align}
\E\left|\int_\sigma^t  \sum_{j\geq 0} \left\langle g_m(s),F_j\cdot D \varphi\right\rangle \dd\beta_j(s)\dd s\right|^4
&\leq C_{\mathrm{BDG}}\E\left|\sum_{j\geq 0} \int_\sigma^t  \left| \left\langle g_m(s),F_j\cdot D \varphi\right\rangle\right|^2 \dd s\right|^2\nonumber\\
&\leq C_{\mathrm{BDG}}|t-\sigma|^2\sup_{s\in[0,T]}\E\|g_m(s)\|^4\|\varphi\|_X^4\nonumber\\
&\leq C_{\mathrm{BDG}}|t-\sigma|^2 C_T^2\left[\E\|\Pi_m g_\mathrm{in}\|^2\right]^2\|\varphi\|_X^4.\label{Cw4}
\end{align}
By \eqref{Cw1}, \eqref{Cw2}, \eqref{Cw3}, \eqref{Cw4} and the equation \eqref{Galerkin}, we obtain 
\begin{equation}\label{Cw45}
\E\left|\langle g_m(t),\varphi\rangle-\langle g_m(\sigma),\varphi\rangle\right|^4\leq |t-\sigma|^{2} \tilde{C}_T\left[\E\|\Pi_m g_\mathrm{in}\|^2\right]^2\|\varphi\|_X^4,
\end{equation}
for all $\varphi\in X$, for all $t,\sigma\in[0,T]$, where $\tilde{C}_T$ is a constant depending on $N$, $\lambda$, $T$ only. This implies 
\begin{equation}\label{Cw5}
\E\left|\langle g(t),\varphi\rangle-\langle g(\sigma),\varphi\rangle\right|^4\leq |t-\sigma|^{2} \tilde{C}_T\left[\E\|g_\mathrm{in}\|^2\right]^2\|\varphi\|_X^4,
\end{equation}
for all $\varphi\in X$, for all $t,\sigma\in[0,T]$. Let $\alpha\in(0,1/4)$. At fixed $\varphi\in X$, we deduce from \eqref{Cw5} and the Kolmogorov's Criterion, \cite[Theorem~3.4]{daprato} that $t\mapsto \langle g(t),\varphi\rangle$ has a modification in $L^2(\Omega;\mathcal{C}^\alpha([0,T]))$. More precisely, there exists $A_{t,\varphi}$ an event of probability $1$ and a process $G_\varphi(t)$ such that, for all $t\in[0,T]$, $\mapsto \langle g(t),\varphi\rangle=G_\varphi(t)$ on $A_{t,\varphi}$, and the process $G_\varphi$ satisfies
\begin{equation}\label{Cw6}
\E\left|\sup_{t\not=s\in[0,T]}\frac{|G_\varphi(t)-G_\varphi(s)|}{|t-s|^\alpha}\right|^2\leq \bar C_T\E\|g_\mathrm{in}\|^2\|\varphi\|_X^2,
\end{equation}
where $\bar C_T$ is a constant depending on $N$, $\lambda$, $T$, $\alpha$, $\E\|g_\mathrm{in}\|$ only.

We now use a density argument to go from strong $X$ to weak $L^2_{x,v}$. To do so we choose a metrization of the weak topology of $L^2_{x,v}$ on bounded sets. Let $\mathcal{D}$ be a dense and countable subset both of $X$ and $L^2_{x,v}$ with their own topology. For instance we could set $\mathcal{D}$ to be the set of finite linear combinations with coefficients in $\mathbb{Q}$ of vectors $e_{k,l}$, $k\in\Z^N$, $l\in\N^N$. Let $A_t=\cap_{\varphi\in\mathcal{D}}A_{t,\varphi}$. Since, almost-surely, $\varphi\mapsto G_\varphi(t)$ is linear with norm bounded by $\|g(t)\|$, there exists a process $(\tilde g(t))$ on $L^2_{x,v}$ such that, for all $t\in[0,T]$, for all $\varphi\in\mathcal{D}$, $ \langle g(t),\varphi\rangle= \langle \tilde g(t),\varphi\rangle$ on $A_t$. This implies $\tilde g(t)=g(t)$ (identity in $L^2_{x,v}$) on $A_t$, \textit{i.e.} $\tilde g$ is a modification of $g$. We have then \eqref{Cw6}, with $\langle \tilde g(t),\varphi\rangle$ instead of $G_\varphi(t)$. To conclude, let $\mathcal{D}=\{\varphi_1,\varphi_2,\ldots\}$ be an enumeration of $\mathcal{D}$ and let us introduce the distance
$$
d(f,g)=\sum_{n=1}^\infty \frac{|\<f-g,\varphi_n\>|}{2^n(1+\|\varphi_n\|_X)}.
$$
Note that
\begin{equation}\label{distsquare}
|d(f,g)|^2\leq \sum_{n=1}^\infty \frac{|\<f-g,\varphi_n\>|^2}{2^n(1+\|\varphi_n\|_X)^2}.
\end{equation}
The distance $d$ gives a metric compatible with the weak topology on the balls of $L^2_{x,v}$. Besides,
\begin{equation}\label{defCwalpha}
\|g\|_{\mathcal{C}_w^\alpha([0,T];L^2_{x,v})}^2=\|g(0)\|_{L^2_{x,v}}^2+\left|\sup_{t\not=s\in[0,T]}\frac{d(g(t),g(s))}{|t-s|^\alpha}\right|^2
\end{equation}
defines a norm of Banach space on $\mathcal{C}_w^\alpha([0,T];L^2_{x,v})$. By \eqref{distsquare}, we have
\begin{equation}\label{appdistsquare}
\E\left|\sup_{t\not=s\in[0,T]}\frac{d(g(t),g(s))}{|t-s|^\alpha}\right|^2
\leq\sum_{n=1}^\infty\frac{1}{2^n(1+\|\varphi_n\|_X)^2}\E\left|\sup_{t\not=s\in[0,T]}\frac{|\<g(t)-g(s),\varphi_n\>|}{|t-s|^\alpha}\right|^2,
\end{equation}
which is bounded by $\bar C_T\E\|g_\mathrm{in}\|^2$ due to \eqref{Cw6} (applied with $\langle \tilde g(t),\varphi\rangle$ instead of $G_\varphi(t)$). We deduce the estimate 
\begin{equation}\label{CwHolder}
\E\|g\|_{\mathcal{C}_w^\alpha([0,T];L^2_{x,v})}^2\leq (1+\bar C_T)\E\|g_\mathrm{in}\|^2.
\end{equation}
In particular, we have $g\in L^2(\Omega;\mathcal{C}_w([0,T];L^2_{x,v}))$.\medskip

{\em Existence.} By linearity of the equation, we can apply our estimates to $g_n-g_m$ instead of $g_n$ and let $n\to+\infty$: this gives 
$$
\E\|g-g_m\|_{\mathcal{C}_w^\alpha([0,T];L^2_{x,v})}^2 \leq (1+\bar C_T)\E\|(\mathrm{Id}-\Pi_m)g_\mathrm{in}\|^2.
$$
In particular, $(g_m)$ is converging to $g$ in $L^2(\Omega;\mathcal{C}_w([0,T];L^2_{x,v}))$. We now have all in hands to take the limit $m\to\infty$ in \eqref{Galerkin}. We deduce the existence of a solution $g$ satisfying the points $(i)$, $(ii)$ and $(iii)$ of Theorem \ref{thmex}.\medskip

{\em Uniqueness.} If $g\in L^2(\Omega;\mathcal{C}_w([0,T];L^2_{x,v}))$ solves \eqref{eqg} in the sense of $(i)$, $(ii)$ and $(iii)$ of Theorem \ref{thmex}, then $g$ satisfies the energy estimate
\begin{equation}\label{energyunique}
\dfrac12\E \|g(t)\|^2+\int_0^t \E\|Dg(s)\|^2 \dd s \leq \lambda^2\int_0^t \E[\|Dg(s)\|^2+N\|g(s)\|^2] \dd s+\dfrac12\E \|g(0)\|^2,\quad t\geq 0.
\end{equation}
Since $|\lambda|<1$, \eqref{energyunique} immediately gives, with Gronwall's lemma, that a solution with initial condition $g_{\text{in}}\equiv 0$ is zero in $L^{\infty}(0,T;L^2(\Omega;L^2_{x,v}))$ for every $T>0$. Hence the uniqueness by linearity of the problem. 
To prove \eqref{energyunique}, on the basis of $(i)$, $(ii)$ and $(iii)$ of Theorem \ref{thmex}, we apply the weak formulation \eqref{Solution} with $\varphi=e_{k,l}$ and use the It\={o} Formula. Note that the differential of $g\mapsto |\<g,e_{k,l}\>|^2$ at $g$ is 
$$
f\mapsto 2\mathrm{Re}\Big(\overline{\<g,e_{k,l}\>} \<f,e_{k,l}\>\Big).
$$ 
Since $g(t)$ is real-valued and $e_{k,l}(x,v)=e^{-2\pi i k\cdot x} q_l(v)$ where $q_l(v)$ is real-valued, the term 
$$
\mathrm{Re}\Big(\overline{\<g(t),e_{k,l}\>} \<g(t),v\cdot\nabla_x e_{k,l}\>\Big)
$$
vanishes and we obtain
\begin{align*}
\dfrac12\E |\<g(t),e_{k,l}\>|^2=&\dfrac12\E |\<g(0),e_{k,l}\>|^2-\E\int_0^t |l| |\<g(s),e_{k,l}\>|^2 \dd s\\
&+\dfrac{\lambda^2}{2}\sum_{j\geq 0}\mathrm{Re}\ \E\int_0^t\Big[ \overline{\<g(s),e_{k,l}\>} \<g(s),(F_j\cdot D)^2 e_{k,l}\>+| \<g(s),(F_j\cdot D) e_{k,l}\>|^2\Big] \dd s.
\end{align*}
We sum the result over $k,l$ and use Properties $(i)-(ii)$ of Theorem \ref{thmex} and the Bessel Identity to obtain
\begin{align*}
\dfrac12\E\|g(t)\|^2
+&\int_0^t \E\|Dg(s)\|^2 \dd s\\ 
=&\dfrac12\E\|g(0)\|^2
+\lambda^2\sum_{j\geq 0} \E\int_0^t\<(F_j\cdot D^*)g(s),(F_j\cdot \tfrac{D+D^*}{2})g(s)\> \dd s.
\end{align*}
Estimate \eqref{energyunique} then follows from Hypothesis~\eqref{noise} on the size of the coefficients of the noise as in \eqref{EEmm}.\medskip

{\em Conservation.} The fact that the quantity $\rho_{\infty}(g)$ is constant in time follows from setting $(k,l)=(0,0)$ in \eqref{Galerkin} and taking the limit $m\to\infty$. \qed \medskip

\section{Regularization and decay}\label{sec:hypopo}

\noindent We prove now extra properties for solutions provided by Theorem~\ref{thmex} summarized in the following theorem. 

\begin{theorem}\label{thmhypo}
Suppose that hypothesis $(\ref{noise})$ holds.
There exists $0<\lambda_0(N)<1$ such that, for all $|\lambda|<\lambda_0$ and any $g_\mathrm{in}\in L^2(\Omega;L^2_{x,v})$, the solution $g$ given by Theorem~\ref{thmex} satisfies the following properties. 
\begin{enumerate}
\item[\textit{i.}] the solution $g$ gains regularity instantaneously: for any $t_0>0$, there exists a constant $C(N,t_0)>0$ such that
\begin{equation}\label{thmregul}
\E\|g(t_0)\|^2_{L^2_{\nabla,D}}\leq C(N,t_0)\E\|g_{\text{in}}\|^2.
\end{equation}
\item[\textit{ii.}] the function $g$ satisfies the bound
\begin{equation}\label{Hypolow}
\begin{array}{rcl}\displaystyle
\E\|g(t)\|_{L^2_{\nabla,D}}^2  \!\!\!
&+&\displaystyle
c\ \E\int_{t_0}^t (\|g(s)\|_{L^2_{\nabla,D}}^2 \!\!\! + \|D\nabla_x g(s)\|^2  + \|D^2 g(s)\|^2 )\,\dd s\\[1em]
&\leq&\displaystyle
C\E\|g(t_0)\|_{L^2_{\nabla,D}}^2 \!\!\!+C \E|\rho_\infty|^2 (t-t_0),
\end{array}
\end{equation}
and the hypocoercive estimate
\begin{equation}\label{Hypostrong}
\E\|g(t)\|_{L^2_{\nabla,D}}^2  \leq\ Ce^{-c(t-t_0)}\E\|g(t_0)\|_{L^2_{\nabla,D}}^2 + K \E|\rho_\infty(g)|^2
\end{equation}
for all $t\geq t_0$, where the constants $c$, $C$ and $K$ depend on $N$ only.
\end{enumerate}
\end{theorem}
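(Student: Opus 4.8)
The plan is to adapt the hypocoercivity scheme of Mouhot and Neumann \cite[Section~5.3]{mouhotneumann} --- designed for the deterministic equation \eqref{eqg} with $\lambda=0$ --- to the present stochastic setting by carrying the It\^o corrections through every energy identity, and to combine it with a time-weighted variant of H\'erau--Villani type to obtain the instantaneous regularization \eqref{thmregul}. All computations are performed on the Galerkin approximations $g_m$ of Proposition~\ref{p:exGalerkin}: each $g_m(t)$ is smooth and rapidly decaying in $v$, the quadratic functionals below are finite and smooth functions of the finite coefficient vector, so It\^o's formula applies; the estimates for the limit $g$ are then recovered by weak lower semicontinuity along the subsequence $g_m\rightharpoonup g$ of the proof of Theorem~\ref{thmex}, using also that $\rinf(g_m)\equiv\rinf(g_\mathrm{in})$ converges to $\rinf(g)$. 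The argument rests on the commutator identities $\{D,A\}=\nabla_x$, $[D,L]=-D$, $[D^*,L]=D^*$ and $\{D,D^*\}=N\,\Id$, on the bounds \eqref{fLf}, \eqref{Ldissip}, \eqref{fDpi}, on the Poincar\'e inequality on $\T^N$, and on the elementary fact that multiplication by $F_j(x)$ commutes with the velocity operators $D$, $D^*$ while $\nabla_x(F_j\cdot h)=(\nabla_xF_j)\cdot h+F_j\cdot\nabla_xh$ --- so that hypothesis \eqref{noise} is precisely what is needed to control the stochastic terms.

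First I would record the energy identities for $g_m$. On the deterministic side, It\^o's formula applied to $\|Dg_m\|^2$ gives $\tfrac12\tfrac{\dd}{\dd t}\|Dg_m\|^2=-\|D^2g_m\|^2-\|Dg_m\|^2-\langle Dg_m,\nabla_xg_m\rangle+R^{(1)}_m$ (using $\{D,A\}=\nabla_x$ and $[D,L]=-D$), applied to $\|\nabla_xg_m\|^2$ gives $\tfrac12\tfrac{\dd}{\dd t}\|\nabla_xg_m\|^2=-\|D\nabla_xg_m\|^2+R^{(2)}_m$, and applied to the mixed quantity $\langle Dg_m,\nabla_xg_m\rangle:=\sum_k\langle D_kg_m,\partial_{x_k}g_m\rangle$ gives $\tfrac{\dd}{\dd t}\langle Dg_m,\nabla_xg_m\rangle=-\|\nabla_xg_m\|^2-\langle Dg_m,\nabla_xg_m\rangle-2\langle D^2g_m,D\nabla_xg_m\rangle+R^{(3)}_m$ (here $D^2g_m$, $D\nabla_xg_m$ are the obvious tensors and $\|\cdot\|$ their Hilbert--Schmidt norm). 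The remainders $R^{(i)}_m$ collect, first, the Galerkin projection errors --- which stem from $\Pi_m$ obeying \eqref{DPI} rather than commuting with $D^*$, $D$, $\nabla_x$, and which, as in \cite{mouhotneumann}, are either sign-favourable or supported on the top modes $|l|=m$ and so disappear in the limit --- and, second, the stochastic corrections. Each stochastic correction is the sum of a drift term coming from $\tfrac{\lambda^2}{2}\sum_jF_j\cdot D^*(F_j\cdot D^*\,\cdot)$ in \eqref{FPBisIto} and the matching It\^o quadratic-variation term; as in the computation \eqref{EEmm} these combine, after one integration by parts in $v$, into expressions of the form $\lambda^2\sum_j\langle(F_j\cdot D^*)\Theta g_m,(F_j\cdot\tfrac{D+D^*}{2})\Theta g_m\rangle$ with $\Theta\in\{\Id,D,\nabla_x\}$, plus (only when $\Theta=\nabla_x$) terms carrying $\|\nabla_xF_j\|_\infty$. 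Using \eqref{noise} and $\{D,D^*\}=N\,\Id$ (so that all of $\|D^2g_m\|$, $\|DD^*g_m\|$, $\|D^*Dg_m\|$ are comparable up to $\|Dg_m\|$, $\|g_m\|$), these are bounded by $\lambda^2C(N)$ times the dissipation that is already present in the very same identity plus strictly lower-order terms, i.e. by $\lambda^2C(N)\big(\|D^2g_m\|^2+\|D\nabla_xg_m\|^2+\|Dg_m\|^2+\|\nabla_xg_m\|^2+\|g_m\|^2\big)$.

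Next, fixing small constants $\alpha_1,\alpha_2,\beta>0$ depending only on $N$ and subject to $\beta^2<\tfrac14\alpha_1\alpha_2$, set $\mathcal{H}(g)=\|g\|^2+\alpha_1\|Dg\|^2+\alpha_2\|\nabla_xg\|^2+2\beta\langle Dg,\nabla_xg\rangle$, which is equivalent to $\|g\|^2_{L^2_{\nabla,D}}$. Combining the identities above together with $\tfrac12\tfrac{\dd}{\dd t}\|g_m\|^2=-\|Dg_m\|^2+(\text{stoch.})$, Young's inequality absorbs the cross-terms thanks to $\beta^2<\tfrac14\alpha_1\alpha_2$, so the deterministic part of $\tfrac{\dd}{\dd t}\E\mathcal{H}(g_m)$ is bounded above by $-c_0\,\E\big(\|Dg_m\|^2+\|\nabla_xg_m\|^2+\|D^2g_m\|^2+\|D\nabla_xg_m\|^2\big)$ for some $c_0(N)>0$. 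By \eqref{fDpi}, $\|g_m\|^2\le\|\Pl g_m\|^2+\|Dg_m\|^2$, and writing $\Pl g_m-\bar\Pi g_m=(\mu-\bar\mu)\dM$ with $\mu(x)=\langle\dM,g_m(x,\cdot)\rangle_{L^2_v}$ and $\bar\mu=\int_{\T^N}\mu$, the Poincar\'e inequality gives $\|\Pl g_m-\bar\Pi g_m\|^2\le C_P\|\nabla_xg_m\|^2$ together with $\|\bar\Pi g_m\|^2=|\rinf(g_m)|^2$; hence the above dissipation dominates $c\,\E\mathcal{H}(g_m)+c\,\E(\|D^2g_m\|^2+\|D\nabla_xg_m\|^2)-C\,\E|\rinf(g_m)|^2$. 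Absorbing the $\lambda^2C(N)$ stochastic corrections into it for $\lambda<\lambda_0(N)$ small yields
\begin{equation}\label{eq:diffineqplan}
\tfrac{\dd}{\dd t}\E\mathcal{H}(g_m)+c\,\E\Big(\mathcal{H}(g_m)+\|D^2g_m\|^2+\|D\nabla_xg_m\|^2\Big)\ \le\ C\,\E|\rinf(g_m)|^2
\end{equation}
with $c,C$ depending only on $N$. Integrating \eqref{eq:diffineqplan} from $t_0$ to $t$ (using that $\rinf(g_m)$ is constant in time) and letting $m\to\infty$ gives \eqref{Hypolow}, while dropping the second-order integral and invoking Gronwall's lemma gives \eqref{Hypostrong}. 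For \eqref{thmregul} I would repeat the computation with the time-weighted functional $\Psi_m(t)=\|g_m\|^2+\alpha_1t\|Dg_m\|^2+2\beta t^2\langle Dg_m,\nabla_xg_m\rangle+\alpha_2t^3\|\nabla_xg_m\|^2$: the terms produced by differentiating the weights, namely $\alpha_1\|Dg_m\|^2$, $4\beta t\langle Dg_m,\nabla_xg_m\rangle$ and $3\alpha_2t^2\|\nabla_xg_m\|^2$, are absorbed on $t\in(0,1]$ by the weighted dissipation (the $-2\|Dg_m\|^2$ from $\|g_m\|^2$, the $-2\beta t^2\|\nabla_xg_m\|^2$ from the mixed term, and the $-2\alpha_1t\|D^2g_m\|^2$, $-2\alpha_2t^3\|D\nabla_xg_m\|^2$, together with the Young step $t^2\|Dg_m\|\,\|\nabla_xg_m\|\le\tfrac12(t\|Dg_m\|^2+t^3\|\nabla_xg_m\|^2)$) --- this is the classical $t,t^2,t^3$ balancing --- with the stochastic corrections again absorbed for $\lambda<\lambda_0$. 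This gives $\E\Psi_m(t)\le\E\|g_\mathrm{in}\|^2$, and since $\Psi_m(t)\ge c\,t^3\|g_m(t)\|^2_{L^2_{\nabla,D}}$ on $(0,1]$ one obtains \eqref{thmregul} for $t_0\le1$, and then for every $t_0>0$ by applying \eqref{Hypostrong} with starting time $1$.

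The main obstacle is the stochastic bookkeeping of the second step: one must check that \emph{every} It\^o correction --- in particular those appearing in the evolution of the first-order mixed quantity $\langle Dg_m,\nabla_xg_m\rangle$ and those dressing the second-order dissipation terms $\|D^2g_m\|^2$, $\|D\nabla_xg_m\|^2$ --- can, after integrating by parts in $v$, be rewritten in the order-matched form $\lambda^2\sum_j\langle(F_j\cdot D^*)\Theta g_m,(F_j\cdot\tfrac{D+D^*}{2})\Theta g_m\rangle$ plus strictly lower-order terms, hence absorbed, rather than producing genuinely higher-order derivative quantities. The favourable structure is inherited from the Stratonovich formulation: the drift correction is the square of the operator $F_j\cdot D^*$ that drives the noise, which is what makes the drift and the quadratic-variation contributions collapse to this symmetrized expression (exactly as witnessed in \eqref{EEmm}); moreover $D+D^*$ is multiplication by $v$, of order zero in the derivatives, and the spatial regularity $\|\nabla_xF_j\|_\infty$ of the noise is used only to commute $\nabla_x$ past multiplication by $F_j$. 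A secondary, purely technical point is the control of the Galerkin projection errors arising from \eqref{DPI}, handled as in \cite{mouhotneumann}.
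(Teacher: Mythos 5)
Your plan for the hypocoercive estimates \eqref{Hypolow} and \eqref{Hypostrong} matches the paper's strategy very closely: both differentiate the same four quadratic quantities $\|g_m\|^2$, $\|Dg_m\|^2$, $\|\nabla_x g_m\|^2$, $\langle Dg_m,\nabla_x g_m\rangle$ at the Galerkin level, collapse the Stratonovich drift and It\^o quadratic-variation corrections into the symmetrized form $\lambda^2\sum_j\langle(F_j\cdot D^*)\Theta g_m,(F_j\cdot\tfrac{D+D^*}{2})\Theta g_m\rangle$ exactly as in \eqref{EEmm}, combine the four identities into a single Lyapunov functional with a small mixed-term coefficient, and close the estimate modulo $\E|\rho_\infty|^2$ via the spectral gap on $\T^N$ (your Poincar\'e argument is the same as the paper's spectral inequality \eqref{e:Poincaré}, phrased differently). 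A minor remark: your $[D_k,L]=-D_k$ is the correct commutator; the $-N$ displayed in the paper near \eqref{hypo3} appears to be a slip, and this does not affect the argument since the sign is favourable in either case.

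Where your proposal genuinely diverges from what the paper does --- and where it has a gap --- is the instantaneous regularization \eqref{thmregul}. You propose to differentiate the time-weighted functional $\Psi_m(t)=\|g_m\|^2+\alpha_1 t\|Dg_m\|^2+2\beta t^2\langle Dg_m,\nabla_x g_m\rangle+\alpha_2 t^3\|\nabla_x g_m\|^2$ directly at the Galerkin level and deduce $\E\Psi_m(t)\le\E\|g_\mathrm{in}\|^2$ uniformly in $m$. The obstruction is the Galerkin projection error in the mixed quantity. From the commutation rules \eqref{DPI} one has
\begin{equation*}
-\langle\nabla_x\Pi_mAg_m,Dg_m\rangle-\langle D\Pi_mAg_m,\nabla_xg_m\rangle=-\|\Pi_{m,m-1}\nabla_xg_m\|^2+\langle D\nabla_x g_m,D(\Pi_m-\Pi_{m,m-1})\nabla_xg_m\rangle,
\end{equation*}
so the remainder $R^{(3)}_m$ in your notation contains a genuine $O(1)\,\|D\nabla_xg_m\|^2$ term; it is neither sign-favourable nor concentrated on the top modes $|l|=m$. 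In the hypocoercive functional $\mathcal{F}$ this is harmless because the coefficient $\gamma$ of the mixed term can be taken much smaller than the coefficient $\alpha$ of $\|\nabla_x g\|^2$, whose dissipation $\alpha\|D\nabla_x g_m\|^2$ then absorbs it. But in $\Psi_m$ the mixed term carries the weight $t^2$ while the only available $\|D\nabla_xg_m\|^2$ dissipation carries the weight $t^3$; since $t^2\not\lesssim t^3$ as $t\to 0$, the error term cannot be absorbed for any admissible choice of $\alpha_1,\alpha_2,\beta$, and you cannot lower the $t^2$ weight either, because it is forced by the Young step $t^2\|Dg_m\|\,\|\nabla_xg_m\|\le\tfrac12(t\|Dg_m\|^2+t^3\|\nabla_xg_m\|^2)$ needed to control the other cross terms.

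This is exactly the difficulty the paper flags: the authors explicitly say they were \emph{not} able to run the hypoelliptic scheme entirely at the Galerkin level. Their fix is to first reduce, by a density argument together with \eqref{Hypostrong}, to initial data $g_\mathrm{in}\in L^2_{\nabla,D}$ (so that $g(s)\in L^2_{\nabla,D}$ with a uniform bound is already known), then apply the time-weighted estimate not to the Galerkin solutions $g_m$ but to the projections $\Pi_m g$ of the actual solution, and pass to the limit $m\to\infty$ inside the inequality. The gain is that the offending commutator $\{\Pi_m,D^*\}\nabla_xg=-D^*(\Pi_m-\Pi_{m,m-1})\nabla_xg$ now acts on the \emph{fixed} function $\nabla_xg$, which lies in $L^2_{x,D}$, so it tends to zero as $m\to\infty$ and drops out of the limiting inequality. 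Your proposal needs to incorporate this two-step structure --- a preliminary reduction to regular data, then estimates on $\Pi_m g$ rather than $g_m$ --- to close the proof of \eqref{thmregul}.
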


\subsection{Termwise estimates} \label{sec:estimates}

\noindent 
In this subsection, we derive some estimates on various functionals of the approximate solutions $(g_m)_m$. Next, we shall combine these termwise bounds to deduce hypocoercive estimates as in \eqref{Hypostrong} (see Section \ref{sec:hypo}) and follow a similar strategy to obtain regularization properties through hypoelliptic estimates including \eqref{thmregul} (see Section \ref{sec:regul}). 

\subsubsection{Heuristics}

\noindent Our strategy involves an estimation of a linear combination of $\E \Phi(g)$ where $\Phi$ is a quadratic functional of the form
$$
\Phi(g)=\< Sg,Tg\>,
$$
where $S$ and $T$ are operators in the variables $x$ or $v$ of order at most one. In particular, $S$ and $T$ are \textit{linear}. The rigorous procedure that we follow hereafter is to bound $\E \Phi(g_m)$ and take the limit $m\to\infty$, since all computations are readily justified when applied to the finite-dimensional system satisfied by $g_m$. 

However, for exposition purpose, proceeding in a formal way, we first explain the spirit of our computations on Equation \refe{eqg} satisfied by $g$. Apply $S$ to \refe{eqg} and then test against $Tg$, and do the same with the roles of $S$ and $T$ exchanged, to obtain
\begin{multline}\label{ST0}
\dd\Phi(g)=-\<SAg,Tg\>\dd t
+\lambda\sum_{j} \<S (F_j\cdot D^*)g,Tg\>\circ \dd\beta_j(t) + \<SLg,Tg\>\dd t +\mathsf{sym},
\end{multline}
where by ``$B(S,T)+\mathsf{sym}$" in the right-hand side of \refe{ST0}, we mean $B(S,T)+B(T,S)$. Switching to It\={o} form and taking expectation in \refe{ST0} gives
\begin{align}
\dfrac{\dd}{\dd t}\E\Phi(g)&=-\E\<SAg,Tg\> + \E\<SLg,Tg\> + \frac{\lambda^2}{2}\E\,\mathcal{N}_{S,T}(g)+\mathsf{sym}.\label{ST1}
\end{align}
where we have introduced the piece of notation
$$\mathcal{N}_{S,T}(g):=\sum_{j} \<S (F_j\cdot D^*)^2g,Tg\>+\<S (F_j\cdot D^*)g,T(F_j\cdot D^*)g\>.$$
Note also, in the case $S=T$, that, by \refe{fLf},
\begin{equation}
\E\<SLg,Sg\>=\E\<LSg,Sg\>+\E\<\{S,L\}g,Sg\>
=-\E\|D Sg\|^2+\E\<\{S,L\}g,Sg\>,\label{Lterms}
\end{equation}
thus, modulo a commutator, the term $\E\<SLg,Sg\>$ in \refe{ST1} provides the part $-\E\|DSg\|^2$ whose contribution helps to set up our hypocoercive estimates. In contrast control on space derivatives is gained by examining the case $S=\nabla_x$, $T=D$ and noticing that
$$
-\E\<DAg,\nabla_xg\>\ =\ -\E\<\{D,A\}g,\nabla_xg\>\ -\ \E\<ADg,\nabla_xg\>\ =\ -\E\|\nabla_xg\|^2\ -\ \E\<ADg,\nabla_xg\>
$$
provides the missing $\E\|\nabla_xg\|^2$. 

To proceed with the actual proof we modify the definition of $\cN_{S,T}$ to
$$\mathcal{N}^{(m)}_{S,T}(g):=\sum_{j} \<S (\Pi_m (F_j\cdot D^*))^2g,Tg\>+\<S \Pi_m(F_j\cdot D^*)g,T\Pi_m(F_j\cdot D^*)g\>$$ 
so as to reflect the presence of a projector in \eqref{GalerkinBis}.

\subsubsection{First estimate: $\E\|g_m\|^2$}  
\noindent We have already showed along the proof of Proposition~\ref{p:exGalerkin} that by taking $S=T=\mathrm{Id}$, one obtains
\begin{equation}
\label{hypo1}
\dfrac12\dfrac{\dd}{\dd t}\E\|g_m\|^2+\E\|D g_m\|^2\leq \lambda^2\E\|D^*g_m\|^2\,.
\end{equation}

\subsubsection{Second estimate: $\E\|\nabla_xg_m\|^2$} 
\noindent By choosing $S=T=\nabla_x$, we obtain, due to the fact that $A$ is skew-symmetric,
$$
\displaystyle
\dfrac12\dfrac{\dd}{\dd t}\E\|\nabla_x g_m\|^2+\E\|D \nabla_x g_m\|^2 
\leq \frac{\lambda^2}{2}\E\,\mathcal{N}^{(m)}_{\nabla_x,\nabla_x}(g_m),
$$
where $\mathcal{N}^{(m)}_{\nabla_x,\nabla_x}(g_m)$ is also written as
$$
\begin{array}{rcl}\displaystyle
\frac12\mathcal{N}^{(m)}_{\nabla_x,\nabla_x}(g_m)&=&\displaystyle
\sum_{j} \< \Pi_m(F_j\cdot D^*)\nabla_xg_m,(F_j\cdot \tfrac{D+D^*}{2})\nabla_xg_m\>\\[1em]
&+&\displaystyle
\sum_{j} \< \Pi_m(\nabla_x(F_j)\cdot D^*)g_m,(F_j\cdot \tfrac{D+D^*}{2})\nabla_xg_m\>\\[1em]
&+&\displaystyle
\sum_{j} \< \Pi_m(F_j\cdot D^*)g_m,(\nabla_x(F_j)\cdot \tfrac{D+D^*}{2})\nabla_xg_m\>\\[1em]
&+&\displaystyle
\frac12\sum_{j} \|\Pi_m(\nabla_x(F_j)\cdot D^*)g_m\|^2\,.
\end{array}
$$
As a result, using \eqref{DPI}, we obtain
\begin{multline}\label{hypo2}
\begin{aligned}
&\dfrac12\dfrac{\dd}{\dd t}\E\|\nabla_xg_m\|^2
+ \E\|D\nabla_xg_m\|^2\\
&\leq \frac{\lambda^2}{2}\E\big{[}\|D^*g_m\|^2 + (2\|D^*g_m\|+\|D^*\Pi_{m,m-1}\nabla_xg_m\|)\,(\|D\nabla_xg_m\|+\|D^*\Pi_{m,m-1}\nabla_xg_m\|)\big{]}
\end{aligned}
\end{multline}

\subsubsection{Third estimate: $\E\|D g_m\|^2$} 
\noindent Recalling $\{A,D\} = -\nabla_x$ and $\{D,L\}=-ND$, by choosing $S=T=D$, we derive
$$\dfrac12\dfrac{\dd}{\dd t}\E\|Dg_m\|^2=-\E\<\nabla_xg_m,Dg_m\> - \E\|D^2g_m\|^2 - N\E\|Dg_m\|^2 + \frac{\lambda^2}{2}\E\,\mathcal{N}^{(m)}_{D,D}(g_m).$$
Furthermore, we have
$$
\begin{array}{rcl}\displaystyle
\mathcal{N}^{(m)}_{D,D}(g_m)&=&\displaystyle
\sum_{j} \|D\Pi_m(F_j\cdot D^*)g_m\|^2\\[1em]
&+&\displaystyle
\sum_{j} \< (F_j\cdot D^*)\Pi_m(F_j\cdot D^*)g_m,D^*Dg_m\>\,
\end{array}
$$
and, by \eqref{DPI}, $\<\nabla_xg_m,Dg_m\>=\<\Pi_{m,m-1}\nabla_xg_m,Dg_m\>$.
It follows then (using some inequalities like $\|D^*Dh\|\leq \|(D^*)^2h\|$ and $\|\Pi h\|\leq \|h\|$ with $\Pi=\Pi_m$ or $\Pi=\Pi_{m,m-1}$) that
\begin{equation}\label{hypo3}
\dfrac12\dfrac{\dd}{\dd t}\E\|D g_m\|^2+\E\|D^2 g_m\|^2\leq
\E\|\Pi_{m,m-1}\nabla_xg_m\|\|Dg_m\| +\frac{\lambda^2}{2}\E\big{[}\|(D^*)^2g_m\|^2+\|DD^*g_m\|^2\big{]}.
\end{equation}

\subsubsection{Fourth estimate: $\E\langle\nabla_x g_m,Dg_m\rangle$} 

\noindent We apply \refe{ST1} with $S=\nabla_x$ and $T=D$. It yields
$$
\begin{aligned}
\dfrac{\dd}{\dd t}\E\langle\nabla_x g_m,Dg_m\rangle &=-\E\<\nabla_x\Pi_mAg_m,Dg_m\> -\E\<D\Pi_mAg_m,\nabla_xg_m\> \\
&+ \E\<\nabla_xLg_m,Dg_m\> + \E\<DLg_m,\nabla_xg_m\> \\
&+\frac{\lambda^2}{2}\E[\mathcal{N}^{(m)}_{\nabla_x,D}(g_m)+\mathcal{N}^{(m)}_{D,\nabla_x}(g_m)].
\end{aligned}
$$
For the total equation, \textit{i.e.} when there is no projector $\Pi_m$, one would use
\begin{equation}\label{missingtotal}
-\<\nabla_x Ag,Dg\>-\<DAg,\nabla_x g\>=-\|\nabla_x g\|^2.
\end{equation}
A way to derive\footnote{Obviously one may also use concrete definitions of differential operators but the abstract way shown here has a clearer counterpart at the Galerkin level.} \eqref{missingtotal} is to write 
$$
-\<\nabla_x Ag,Dg\>-\<DAg,\nabla_x g\>=\<(D^*-D)Ag,\nabla_x g\>
$$
and to use the identity $A=(D+D^*)\cdot\nabla_x$. This gives
$$
-\<\nabla_x Ag,Dg\>-\<DAg,\nabla_x g\>=\<\{D^*,D\}\nabla_x g,\nabla_x g\>,
$$
and one concludes by use of the identity $\{D^*,D\}=-\mathrm{Id}$. For the terms with projectors, the same kind of computations gives
$$
-\<\nabla_x\Pi_mAg_m,Dg_m\> -\<D\Pi_mAg_m,\nabla_xg_m\>=\<\{D^*\Pi_m,D\Pi_m\}\nabla_x g,\nabla_x g\>.
$$
By \eqref{DPI} and the identity $\Pi_m\Pi_{m,m-1}=\Pi_{m,m-1}$, it follows that
\begin{multline*}
-\<\nabla_x\Pi_mAg_m,Dg_m\> -\<D\Pi_mAg_m,\nabla_xg_m\>\\
= -\|\Pi_{m,m-1}\nabla_xg_m\|^2\ +\ \<D\nabla_x g_m,D(\Pi_m-\Pi_{m,m-1})\nabla_xg_m\>
\end{multline*}
Besides, identity $L = -D^*D = N\textrm{Id}-DD^*$ provides
$$
\begin{aligned}
\E\<\nabla_xLg_m,Dg_m\> + \E\<DLg_m,\nabla_xg_m\> &= -\E\<D^*D\nabla_xg_m,Dg_m\> - \E\<DD^*D g_m,\nabla_xg_m\> \\
& \!\!\!\!\!\!\!\!\!\!\!\!\!\!\!\!\!\!\!\!\!\!\!\!\!\!\!\!\!\!\!\!\!\!\!\!\!= -\E\<D\nabla_xg_m,D^2g_m\> - \E\<D^*DD g_m,\nabla_xg_m\> - N\E\<Dg_m,\nabla_xg_m\>  \\
& \!\!\!\!\!\!\!\!\!\!\!\!\!\!\!\!\!\!\!\!\!\!\!\!\!\!\!\!\!\!\!\!\!\!\!\!\!= -2\E\<D\nabla_xg_m,D^2g_m\> - N\E\<Dg_m,\Pi_{m,m-1}\nabla_xg_m\> .
\end{aligned}
$$
Concerning the terms $\mathcal{N}^{(m)}_{\nabla_x,D}(g_m)$ and $\mathcal{N}^{(m)}_{D,\nabla_x}(g_m)$, we write them as the sum of
$$
-\sum_{j} \<(\Pi_m F_j\cdot D^*)^2g_m,D\cdot\nabla_xg_m\>+\<\nabla_x \Pi_m(F_j\cdot D^*)g_m,D\Pi_m(F_j\cdot D^*)g_m\>
$$
and 
$$
\sum_{j} \<(\Pi_m F_j\cdot D^*)^2g_m,D^*\cdot\nabla_xg_m\>+\<D \Pi_m(F_j\cdot D^*)g_m,\nabla_x\Pi_m(F_j\cdot D^*)g_m\>,
$$
to bound them proceeding as before by the sum of terms
$$
\|(D^*)^2g_m\|\|D\nabla_xg_m\|,\quad 2(\|D^*\Pi_{m,m-1}\nabla_xg_m\|+\|D^*g_m\|)\|DD^*g_m\|
$$
and 
$$
\|(D^*)^2g_m\|\|D^*\Pi_{m,m-1}\nabla_xg_m\|\,.
$$
As a result, we finally obtain
\begin{equation}\label{hypo4}
\begin{aligned}
\dfrac{\dd}{\dd t}\E\langle\nabla_x g_m,Dg_m\rangle&+\E\|\Pi_{m,m-1}\nabla_x g_m\|^2 \\
&\leq \E\|D\nabla_xg_m\|^2+2\E\|D\nabla_xg_m\|\|D^2g_m\|+N\E\|Dg_m\|\|\Pi_{m,m-1}\nabla_xg_m\| \\
&\quad+\frac{\lambda^2}{2}\E\big{[}
\|(D^*)^2g_m\|\ (\|D\nabla_xg_m\|+\|D^*\Pi_{m,m-1}\nabla_xg_m\|)\\
&\qquad\qquad+2\|DD^*g_m\|\ (\|D^*\Pi_{m,m-1}\nabla_xg_m\|+\|D^*g_m\|)
\big{]}.
\end{aligned}
\end{equation}

Let us summarize in the following proposition the differential inequalities derived above.
\begin{proposition} Suppose that hypothesis $(\ref{noise})$ holds and let $g_\mathrm{in}\in L^2(\Omega;L^2_{x,v})$. Let $m\in\N$ and let $g_m$ denote the Galerkin approximation given by Proposition~\ref{p:exGalerkin}. Then we have the following estimates, respectively on
\smallskip

\noindent $\bullet$ the $L^2_{\omega,x,v}$-norm of $g_m$
$$
\dfrac12\dfrac{\dd}{\dd t}\E\|g_m\|^2+\E\|D g_m\|^2\leq \lambda^2\E\|D^*g_m\|^2\,,
$$
$\bullet$ the $L^2_{\omega,x,v}$-norm of $\nabla_x g_m$
$$
\begin{aligned}
\dfrac12&\dfrac{\dd}{\dd t}\E\|\nabla_xg_m\|^2
+ \E\|D\nabla_xg_m\|^2\\
&\leq \frac{\lambda^2}{2}\E\big{[}\|D^*g_m\|^2 + (2\|D^*g_m\|+\|D^*\Pi_{m,m-1}\nabla_xg_m\|)\,(\|D\nabla_xg_m\|+\|D^*\Pi_{m,m-1}\nabla_xg_m\|)\big{]},
\end{aligned}
$$
$\bullet$ the $L^2_{\omega,x,v}$-norm of $D g_m$
$$
\dfrac12\dfrac{\dd}{\dd t}\E\|D g_m\|^2+\E\|D^2 g_m\|^2\leq
\E\|\Pi_{m,m-1}\nabla_xg_m\|\|Dg_m\| +\frac{\lambda^2}{2}\E\big{[}\|(D^*)^2g_m\|^2+\|DD^*g_m\|^2\big{]},
$$
$\bullet$ the $L^1_{\omega}$-norm of the cross product $\<\nabla_x g_m,D g_m\>$
$$
\begin{aligned}
\dfrac{\dd}{\dd t}\E\langle\nabla_x g_m,Dg_m\rangle&+\E\|\Pi_{m,m-1}\nabla_x g_m\|^2 \\
&\leq \E\|D\nabla_xg_m\|^2+2\E\|D\nabla_xg_m\|\|D^2g_m\|+N\E\|Dg_m\|\|\Pi_{m,m-1}\nabla_xg_m\| \\
&\quad+\frac{\lambda^2}{2}\E\big{[}
\|(D^*)^2g_m\|\ (\|D\nabla_xg_m\|+\|D^*\Pi_{m,m-1}\nabla_xg_m\|)\\
&\qquad\qquad+2\|DD^*g_m\|\ (\|D^*\Pi_{m,m-1}\nabla_xg_m\|+\|D^*g_m\|)
\big{]}.
\end{aligned}
$$
\end{proposition}
\subsubsection{Closed form of the estimates} 

\noindent In this section, we gather estimates \refe{hypo1}, \refe{hypo2}, \refe{hypo3} and \refe{hypo4} --- derived above --- in a closed form with respect to $g_m$, $\nabla_x g_m$, $Dg_m$, $D\nabla_xg_m$ and $D^2g_m$. Note in particular that we need to replace all occurrences of the operator $D^*$ using formula
\begin{equation}\label{DstarD0}
\|D^* f\|^2=\|D f\|^2+N\|f\|^2
\end{equation}
proved by \refe{Df} and \refe{Dstarf}. In what follows $C$ denotes a positive constant that depends only on the dimension $N$.

{\em First estimate.} The first bound \eqref{hypo1} can now be written as
\begin{equation}\label{hypo1closed}
\dfrac12\dfrac{\dd}{\dd t}\E \|g_m\|^2+\E\|Dg_m\|^2 \leq C\,\lambda^2\E[\|Dg_m\|^2+\|g_m\|^2].
\end{equation}

{\em Second estimate.} The second one \eqref{hypo2} becomes
\begin{equation}\label{hypo2closed}
\begin{aligned}
\dfrac12 &\dfrac{\dd}{\dd t}\E\|\nabla_xg_m\|^2 + \E\|D\nabla_xg_m\|^2  \\
&\leq C\,\lambda^2\E\big{[}\|g_m\|^2\,+\,\|Dg_m\|^2\,+\,\|\Pi_{m,m-1}\nabla_xg_m\|^2
\,+\,\|D\nabla_xg_m\|^2\big{]}.
\end{aligned}
\end{equation}

{\em Third estimate.} 
Concerning the third one \eqref{hypo3}, we obtain
\begin{equation}\label{hypo3closed}
\dfrac12\dfrac{\dd}{\dd t}\E\|Dg_m\|^2+\E\|D^2g_m\|^2 \leq
\E\|Dg_m\|\|\Pi_{m,m-1}\nabla_xg_m\| +C\,\lambda^2\E\big{[}\|g_m\|^2+\|Dg_m\|^2+\|D^2g_m\|^2\big{]}.
\end{equation}

{\em Fourth estimate.} Finally, likewise, the fourth bound \eqref{hypo4} writes
\begin{equation}\label{hypo4closed}
\begin{aligned}
&\dfrac{\dd}{\dd t}\E\langle \nabla_xg_m,Dg_m \rangle+\E\|\Pi_{m,m-1}\nabla_xg_m\|^2\\ 
&\leq \E\|D\nabla_xg_m\|^2+2\E\|D\nabla_xg_m\|\|D^2g_m\|+N\E\|Dg_m\|\|\Pi_{m,m-1}\nabla_xg_m\| \\
 &\quad+C\,\lambda^2\E\big{[}\|g_m\|^2\,+\,\|Dg_m\|^2\,+\,\|\Pi_{m,m-1}\nabla_xg_m\|^2
\,+\,\|D\nabla_xg_m\|^2\,+\,\|D^2\nabla_xg_m\|^2\big{]}.
\end{aligned}
\end{equation}

\subsection{Hypocoercive estimates} \label{sec:hypo}
\noindent In this section, we derive hypocoercive estimates \eqref{Hypolow} and \eqref{Hypostrong}. Without loss of generality we assume $t_0=0$ and $g_\mathrm{in}\in L^2(\Omega;L^2_{\nabla,D})$. Our strategy is to prove uniform bounds on the approximate solutions $(g_m)_m$ and take the limit $m\to\infty$. 

\subsubsection{Balance of the estimates}
\noindent To prove an exponential damping we shall combine \eqref{hypo1closed}, \eqref{hypo2closed}, \eqref{hypo3closed} and \eqref{hypo4closed} of Section \ref{sec:estimates} to identify a functional bounded by its own dissipation. The first step is to explain how to bound $\|g_m\|$. Mark that when $m\geq1$
\begin{equation}\label{e:Poincaré}
\begin{aligned}
\|g_m\|^2&\ =\ \sum_{\substack{|k|\leq m\\|l|\leq m}}\ |\<e_{k,l},g_m\>|^2\\
&\leq\ \sum_{\substack{|k|\leq m\\0<|l|\leq m}}\ |l|^2\,|\<e_{k,l},g_m\>|^2
\ +\ \sum_{\substack{0<|k|\leq m}}\ (2\pi|k|)^2|\<e_{k,0},g_m\>|^2
\ +\ \ |\<e_{0,0},g_m\>|^2\\
&\leq \|Dg_m\|^2\ +\ \|\Pi_{m,m-1}\nabla_xg_m\|^2\ +\ |\rho_\infty(g_m)|^2\,.
\end{aligned}
\end{equation}

Now we look for a suitable functional in the form 
$$
\mathcal{F}(g)=\|g\|^2+\alpha\|\nabla_x g\|^2 + \beta\|Dg\|^2 + 2\gamma\langle \nabla_x g,Dg\rangle.
$$
where $\alpha$, $\beta$, $\gamma$ are some positive coefficients. First we require $\gamma^2<\alpha\beta$ so as to ensure
\begin{equation}
\label{comparaison}
C_1\|g\|^2_{L^2_{\nabla,D}} \leq\mathcal{F}(g) \leq C_2\|g\|^2_{L^2_{\nabla,D}},
\end{equation}
for some positive constants $C_1,\,C_2$.

Now by adding \eqref{hypo1closed}, \eqref{hypo2closed}, \eqref{hypo3closed} and \eqref{hypo4closed}, we have
$$
\begin{aligned}
\dfrac12&\dfrac{\dd}{\dd t}\E\mathcal{F}(g_m)\
+\ \E\Big[\|Dg_m\|^2+\alpha\|D\nabla_xg_m\|^2+\beta\|D^2g_m\|^2+\gamma\|\Pi_{m,m-1}\nabla_xg_m\|^2\Big]\\[0.5em]
&\leq (1+\alpha+\beta+\gamma)\,C\lambda^2 \E|\rho_\infty(g_\mathrm{in})|^2\\[0.5em]
&\quad+\ (\beta+N\gamma)\E\|Dg_m\|\|\Pi_{m,m-1}\nabla_xg_m\|+\gamma\E\|D\nabla_xg_m\|^2+2\gamma\E\|D\nabla_xg_m\|\|D^2g_m\|\\[0.5em]
&\quad+\ (1+\alpha+\beta+\gamma)\,2C\lambda^2
\E\big{[}\|Dg_m\|^2\,+\,\|\Pi_{m,m-1}\nabla_xg_m\|^2
\,+\,\|D\nabla_xg_m\|^2\,+\,\|D^2\nabla_xg_m\|^2\big{]}
\end{aligned}
$$
from which follows 
\begin{equation}\label{hypofin1}
\begin{aligned}
\dfrac12&\dfrac{\dd}{\dd t}\E\mathcal{F}(g_m)\
+\ K\E\Big[\|Dg_m\|^2+\|D\nabla_xg_m\|^2+\|D^2g_m\|^2+\|\Pi_{m,m-1}\nabla_xg_m\|^2\Big]\\[0.5em]
&\leq K'\lambda^2 \E|\rho_\infty(g_\mathrm{in})|^2\ +\ K'\lambda^2
\E\big{[}\|Dg_m\|^2+\|\Pi_{m,m-1}\nabla_xg_m\|^2+\|D\nabla_xg_m\|^2\,+\,\|D^2\nabla_xg_m\|^2\big{]}
\end{aligned}
\end{equation}
for some positive $K$, $K'$ depending only on $N$, $\alpha$, $\beta$ and $\gamma$, provided that
$\gamma\leq \alpha/2$ and both $(\beta+N\gamma)/\sqrt{1\times\gamma}$ and $\gamma/\sqrt{\alpha\times\beta}$ are sufficiently small. The latter constraints may be satisfied jointly with $\gamma^2<\alpha\beta$ by setting\footnote{There is of course no uniqueness in this choice. For instance setting $\alpha=1$, $\beta=\gamma^\theta$, any $\frac12<\theta<2$ would work provided $\gamma$ is small enough.} $\alpha=1$, $\beta=\gamma$ and choosing $\gamma$ sufficiently small.

Having picked suitable parameters $\alpha$, $\beta$, $\gamma$, we now require $\lambda$ to be sufficiently small --- in a way that depends only on $N$ --- to derive
\begin{equation}\label{hypofin2}
\begin{aligned}
\dfrac12&\dfrac{\dd}{\dd t}\E\mathcal{F}(g_m)\
+\ K''\E\Big[\|g_m\|^2+\|Dg_m\|^2+\|D\nabla_xg_m\|^2+\|D^2g_m\|^2+\|\Pi_{m,m-1}\nabla_xg_m\|^2\Big]\\[0.5em]
&\leq K''' \E|\rho_\infty(g_\mathrm{in})|^2
\end{aligned}
\end{equation}
for some positive constants $K''$, $K'''$ depending only on $N$.

\subsubsection{Exponential damping} 
\noindent 
Integrating \eqref{hypofin2} from $0$ to $t$ and taking the limit $m\to\infty$ yields \eqref{Hypolow} (for $t_0=0$).

To prove \eqref{Hypostrong} we first stress that proceeding as in the proof of \eqref{e:Poincaré} gives
$$
\|\nabla_xg_m\|^2\ \leq\ \|\Pi_{m,m-1}\nabla_xg_m\|^2\ +\ \|D\nabla_xg_m\|^2
$$
and conclude then from \eqref{hypofin2} and \eqref{comparaison} that
$$
\frac12\dfrac{\dd}{\dd t}\E\mathcal{F}(g_m)\ +\ c\,\E\mathcal{F}(g_m)\ \leq\ C\,\E|\rho_\infty(g_\mathrm{in})|^2
$$
for some positive constants $c$ and $C$. This yields
$$
\forall t\geq0\,,\quad \E\mathcal{F}(g_m)(t)\ \leq\ \E\mathcal{F}(g_\mathrm{in})e^{-2\,ct}
\ +\ \frac{C}{c}\,\E|\rho_\infty(g_\mathrm{in})|^2
$$
through a multiplication by $e^{2ct}$ and an integration in time. Using again \eqref{comparaison} and taking the limit $m\to\infty$ achieves the proof of \eqref{Hypostrong} (for $t_0=0$).

\subsection{Hypoelliptic estimates}\label{sec:regul}

\noindent In this part, we conclude the proof of Theorem~\ref{thmhypo} by showing that the solution $g$ to Equation \eqref{eqg} with initial condition $g_{\textrm{in}}$ in $L^2(\Omega;L^2_{x,v})$  gains regularity instantaneously. Precisely, we prove the following result.

\begin{proposition}\label{propregul}
Let $t_0>0$. There exist positive constants $\lambda^*$ and $C$ such that for any $g_\mathrm{in}\in L^2(\Omega;L^2_{x,v})$ and $|\lambda|<\lambda^*$, the corresponding solution $g$ satisfies for any $t\in(0,t_0]$ 
\begin{equation}\label{propregulb}
\E\|g(t)\|^2 \leq C\E\|g_{\text{in}}\|^2,\quad \E\|Dg(t)\|^2 \leq \frac{C}{t}\E\|g_{\text{in}}\|^2,\quad \E\|\nabla_xg(t)\|^2 \leq \frac{C}{t^3}\E\|g_{\text{in}}\|^2.
\end{equation}
\end{proposition}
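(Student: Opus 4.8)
The plan is to establish all three bounds uniformly on the Galerkin approximations $(g_m)_m$ and then pass to the limit exactly as in Section~\ref{sec:exuniq}. The first inequality needs nothing new: the uniform bound \eqref{uniform-Galerkin} already gives $\E\|g_m(t)\|^2\le e^{2N\lambda^2 t}\E\|g_{\text{in}}\|^2$, whence $\E\|g(t)\|^2\le e^{2N\lambda^2 t_0}\E\|g_{\text{in}}\|^2$ for $t\in(0,t_0]$ after passing to the limit. It is convenient to record along the way that $\E|\rho_\infty(g_m)|^2=\E|\rho_\infty(\Pi_m g_{\text{in}})|^2\le\E\|g_{\text{in}}\|^2$.

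For the gain of one $v$-derivative and of one $x$-derivative the idea is the classical hypoelliptic smoothing scheme of Villani \cite{villani}: run the very termwise estimates of Section~\ref{sec:estimates} inside a \emph{time-weighted} Lyapunov functional whose powers of $t$ match the parabolic scaling of the kinetic Fokker--Planck flow ($D$ costs a factor $t^{-1/2}$, $\nabla_x$ a factor $t^{-3/2}$):
$$
\mathcal{G}_m(t)\ =\ \E\Big[\,\|g_m\|^2\ +\ \alpha\,t\,\|Dg_m\|^2\ +\ 2\gamma\,t^2\,\langle\nabla_x g_m,Dg_m\rangle\ +\ \beta\,t^3\,\|\nabla_x g_m\|^2\,\Big],\qquad t\in[0,t_0].
$$
Imposing $\gamma^2<\alpha\beta$ makes $\mathcal{G}_m$ equivalent, via Young's inequality applied to the cross term with the split $t^2=t^{1/2}\cdot t^{3/2}$, to $\E\|g_m\|^2+t\,\E\|Dg_m\|^2+t^3\,\E\|\nabla_x g_m\|^2$; since moreover $\mathcal{G}_m(0)=\E\|g_m(0)\|^2\le\E\|g_{\text{in}}\|^2$, it then suffices to bound $\mathcal{G}_m(t_0)$ by $C\,\E\|g_{\text{in}}\|^2$. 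Differentiating $\mathcal{G}_m$ and inserting the equality forms of \eqref{hypo1}--\eqref{hypo4} --- kept in \emph{product} shape rather than in the lossier closed forms \eqref{hypo1closed}--\eqref{hypo4closed} --- the terms created when $\tfrac{\dd}{\dd t}$ hits the explicit powers of $t$ (namely $\alpha\|Dg_m\|^2$ at order $t^0$, $(4\gamma-2\alpha)\,t\,\langle\nabla_x g_m,Dg_m\rangle$ at order $t^1$, $3\beta\,t^2\|\nabla_x g_m\|^2$ at order $t^2$) must be matched against the dissipative terms $-2\|Dg_m\|^2$ (order $t^0$), $-2\alpha t\|D^2 g_m\|^2$ and $-2N\alpha t\|Dg_m\|^2$ (order $t^1$), $-2\gamma t^2\|\Pi_{m,m-1}\nabla_x g_m\|^2$ (order $t^2$) and $-2\beta t^3\|D\nabla_x g_m\|^2$ (order $t^3$), with help of the Poincaré-type inequalities \eqref{e:Poincaré} and $\|\nabla_x g_m\|^2\le\|\Pi_{m,m-1}\nabla_x g_m\|^2+\|D\nabla_x g_m\|^2$.

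The choices are then essentially dictated: take $\alpha=2\gamma$ so that the whole order-$t$ cross term cancels identically (this is why $-\langle\nabla_x g_m,Dg_m\rangle$ must be kept unbounded in \eqref{hypo3}), pick $\beta$ a fixed fraction of $\gamma$ with $\tfrac{\gamma}{2}<\beta<\tfrac{2\gamma}{3}$ (compatible with both $\gamma^2<\alpha\beta$ and $3\beta<2\gamma$), and finally let $\gamma$, hence $\alpha,\beta$, and then $\lambda<\lambda^*$, be small enough --- all depending only on $N$ and $t_0$ --- for every remaining cross term and every $O(\lambda^2)$ contribution to be swallowed by the dissipation. The crucial point is the use of \emph{weighted} Young inequalities giving each quantity its natural power of $t$: e.g. the second-order coupling $2\|D\nabla_x g_m\|\,\|D^2 g_m\|$ of \eqref{hypo4} is split as $(t^{1/2}\|D^2 g_m\|)\,(t^{3/2}\|D\nabla_x g_m\|)$ so that it lands on the $t^1$-order dissipation of $\|D^2 g_m\|^2$ and the $t^3$-order dissipation of $\|D\nabla_x g_m\|^2$, while terms like $\langle Dg_m,\nabla_x g_m\rangle$ and the $O(\lambda^2)$ products are distributed over the $t^0$-, $t^1$- and $t^2$-order dissipations, the bound $t\le t_0$ being used repeatedly (whence the $t_0$-dependence of $C$). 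One is left with
$$
\tfrac12\tfrac{\dd}{\dd t}\mathcal{G}_m\ +\ c\,\E\big[\|Dg_m\|^2+t\|D^2 g_m\|^2+t^2\|\Pi_{m,m-1}\nabla_x g_m\|^2+t^3\|D\nabla_x g_m\|^2\big]\ \le\ C\,\E|\rho_\infty(g_m)|^2\ +\ \gamma\,t^2\,\E\|D(\Pi_m-\Pi_{m,m-1})\nabla_x g_m\|^2,
$$
the last term being the Galerkin commutator defect produced by $-\langle\nabla_x\Pi_m Ag_m,Dg_m\rangle-\langle D\Pi_m Ag_m,\nabla_x g_m\rangle$ in the derivation of \eqref{hypo4}, which is absent for the true equation. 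Integrating on $[0,t_0]$, using $\mathcal{G}_m(0)\le\E\|g_{\text{in}}\|^2$ and $\E|\rho_\infty(g_m)|^2\le\E\|g_{\text{in}}\|^2$, and letting $m\to\infty$ --- the defect, supported on the top spectral shell $|l|\sim m$ of $D\nabla_x g_m$, tends to $0$ once $\int_0^{t_0}\E\|D\nabla_x g_m\|^2$ is known bounded, which for $g_{\text{in}}\in L^2_{\nabla,D}$ is furnished by the already-proven hypocoercive estimate \eqref{Hypolow} --- yields $\mathcal{G}(t_0)\le C(N,t_0)\,\E\|g_{\text{in}}\|^2$, i.e. \eqref{propregulb} at $t=t_0$ for such data; since $t_0$ is arbitrary this is \eqref{propregulb} on all of $(0,t_0]$, and a density argument (the estimate only sees $\|g_{\text{in}}\|$) removes the restriction $g_{\text{in}}\in L^2_{\nabla,D}$.

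The main obstacle is precisely this weighted bookkeeping. Unlike the time-independent computation of Section~\ref{sec:hypo}, there is no freedom in the powers of $t$, and one must check that no bad quantity --- above all $\|D\nabla_x g_m\|^2$, whose only dissipation sits at order $t^3$ --- ever reappears at a lower order in $t$ than the dissipation available to control it; this is what forces the product-form estimates, the $\alpha=2\gamma$ cancellation and the geometric-mean splittings, and it is also why one cannot simply recycle the closed forms \eqref{hypo1closed}--\eqref{hypo4closed}. Everything else is routine, resting as throughout on the commutators $\{D,A\}=\nabla_x$ and $\{D,L\}=-ND$.
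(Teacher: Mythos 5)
Your high-level plan is the right one and matches the paper's: introduce the time-weighted functional $\|g\|^2+at^3\|\nabla_xg\|^2+bt\|Dg\|^2+2ct^2\langle\nabla_xg,Dg\rangle$ with $c^2<ab$, feed in the termwise estimates of Section~\ref{sec:estimates} in product form, close with weighted Young's inequalities, and reduce to $g_{\mathrm{in}}\in L^2_{\nabla,D}$ by density. Your observation that the first inequality is already contained in~\eqref{uniform-Galerkin} is also fine, as is the weighted split $(t^{1/2}\|D^2g_m\|)(t^{3/2}\|D\nabla_xg_m\|)$ for the second-order cross term.

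However, there is a genuine gap at the one place where the argument genuinely matters, namely the Galerkin commutator defect
\[
\bigl\langle D\nabla_xg_m,\,D(\Pi_m-\Pi_{m,m-1})\nabla_xg_m\bigr\rangle
\ =\ \bigl\|(\Pi_{m,m-1}-\Pi_{m,m-2})D\nabla_xg_m\bigr\|^2,
\]
which, after the $t^2$ weight is applied, sits at order $t^2$ while the only available dissipation for $\|D\nabla_xg_m\|^2$ is at order $t^3$; near $t=0$ it therefore cannot be absorbed, regardless of the choice of $a,b,c,\lambda$. This is exactly the obstruction the paper flags when it says it could not produce these estimates ``entirely at the level of the Galerkin approximation.'' You propose to let $m\to\infty$ and assert that the defect vanishes ``once $\int_0^{t_0}\E\|D\nabla_xg_m\|^2$ is known bounded.'' That assertion does not follow: a uniform $L^2$-bound on $D\nabla_xg_m$ gives only weak compactness, and the mass of $D\nabla_xg_m$ sitting in the top shell $|l|=m-1$ need not go to zero --- the shell is indexed by the same $m$ as the approximation, so there is no fixed function whose spectral tail you are estimating. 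To make the tail vanish you would need strong $L^2$-convergence of $D\nabla_xg_m$, which in turn requires higher a priori bounds you do not have.

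The paper's actual route sidesteps this. Because the hypocoercive estimate \eqref{Hypolow} has already been established, one knows that for $g_{\mathrm{in}}\in L^2_{\nabla,D}$ the limit solution $g$ itself belongs to $L^2_{\nabla,D}$ with $D\nabla_x g\in L^2$. The computation is then run not on the Galerkin solution $g_m$ but on the spectral truncation $\Pi_m g$ of the \emph{fixed} solution $g$. The commutator defect now involves $(\Pi_m-\Pi_{m,m-1})$ applied to a fixed square-integrable object, whose spectral tail does converge to zero as $m\to\infty$. This is the ``key gain'' highlighted in Section~\ref{sec:regul}, and it is what allows the limit to be taken cleanly. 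So your proof needs to be amended: replace the Galerkin sequence $g_m$ by the projections $\Pi_m g$ of the solution throughout this section, and invoke the already-established propagation of $L^2_{\nabla,D}$-regularity to kill the commutator defects in the limit. The rest of your bookkeeping --- the parabolic weights, the $\gamma^2<\alpha\beta$ equivalence, the weighted Young inequalities --- then goes through, though the specific coefficient choice $\alpha=2\gamma$, $\gamma/2<\beta<2\gamma/3$ is neither necessary nor the one used in the paper (which takes the $\nabla_x$-coefficient equal to $1$ and the other two small and comparable); any choice satisfying the listed smallness constraints works.
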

\noindent By a simple approximation argument one may reduce the proof of the proposition to the proof of estimates \eqref{propregulb} starting from $g_{\text{in}}\in L^2_{\nabla,D}$. For writing convenience, we assume $t_0=1$, modifications to obtain the proof of the general case being mostly notational. 

Though the proof of Proposition~\ref{propregul} has some similarities with the proof of exponential damping, constraints on functionals leading to global hypoelliptic estimates are a lot more stringent\footnote{This may be seen on the fact that in the strategy hereafter estimates should be compatible with chosen powers of~$t$.} and we have not been able to produce them entirely at the level of the Galerkin approximation. Instead we directly derive estimates on $g$ by examining the equations satisfied by $\Pi_m g$ and taking the limit $m\to\infty$ using the already established propagation of regularity. The key gain is that terms analogous to $\E\|D\nabla_xg_m\|^2$ in \eqref{hypo4closed} that arises from failure of commutativity of $\Pi_m$ and $D^*$ disappear when applied to $g$ in the limit $m\to\infty$ because $\{\Pi_m,D^*\}=-D^*(\Pi_m-\Pi_{m,m-1})$. 

We introduce the family of functionals parametrized by $t\in[0,1]$,
$$\mathcal{K}_t(g) :=\|g\|^2 + at^3\|\nabla_xg\|^2 + bt\|Dg\|^2 + 2ct^2\<\nabla_xg,Dg\>$$
where $a$, $b$ and $c$ are some positive constants to be chosen later on. By requiring $c^2<ab$, we ensure
\begin{equation}\label{compinfK}
\|g\|^2+C_1(t^3\|\nabla_x g\|^2+t\|Dg\|^2)\leq\mathcal{K}_t(g)\leq\|g\|^2+C_2(t^3\|\nabla_x g\|^2+t\|Dg\|^2)
\end{equation}
for some positive $C_1$, $C_2$. Proceeding as explained above we derive\footnote{The reader is referred to the treatment of a similar case in \cite[Appendix~A.21]{villani} for omitted details concerning algebraic manipulations.} for any $0\leq t\leq1$
$$
\mathcal{K}_t(g(t))\ +\ C\int_0^t (\E\|g(s)\|^2+s^3\E\|D\nabla_xg(s)\|^2 +s\E\|D^2g(s)\|^2 + s^2\|\nabla_x g(s)\|^2)\dd s\ \leq\ \mathcal{K}_0(g_{\text{in}})
$$
for some positive $C$, provided first that $a$, $b$ and $c$ are chosen such that both $(b+c)/\sqrt{1\times c}$ and 
$c/\sqrt{a\times b}$ are sufficiently small and then that $\lambda$ is sufficiently small. As above constraints on $a$, $b$, $c$ may be fulfilled by choosing $a=1$, $b=c$ and $c$ small enough. By appealing to \eqref{compinfK} we achieve the proofs.

\section{Invariant measure}\label{sec:invmeas}

\noindent In this section, we prove Theorem~\ref{thminv}. To do so, we fix $\bar{\rho}\in\R$ and assume $|\lambda|<\lambda_0$.

\subsection{Proof of existence}

Let $g_{\text{in}}\in L^2_{x,v}$ be a deterministic initial datum in $X_{\bar{\rho}}$. We consider the unique solution $g$ to problem (\hyperref[probm]{$\text{P}_{\bar{\rho}}$}) given by Theorem \ref{thmex}. First of all, using the regularizing bound \eqref{thmregul} of Theorem \ref{thmhypo}, we deduce that there exists a positive constant $C$ such that 
\begin{equation}\label{invregultsmall}
\E\|g(1)\|^2_{L^2_{\nabla,D}}\leq C\E\|g_{\text{in}}\|^2.
\end{equation}
We also recall the damping estimate \eqref{Hypostrong} of Theorem \ref{thmex}: for $t\geq 1$,
$$
\E\|g(t)\|_{L^2_{\nabla,D}}^2  \leq\ Ce^{-c(t-1)}\E\|g(1)\|_{L^2_{\nabla,D}}^2 + K \E|\rho_\infty(g)|^2.
$$
It implies, with \eqref{invregultsmall},
\begin{equation}\label{boundforex}
\sup\limits_{t\geq 1}\E\|g(t)\|_{L^2_{\nabla,D}}^2  \leq\ C\E\|g_{\text{in}}\|^2 + K {\bar{\rho}}^2.
\end{equation}
We introduce the family $(\mu_T)_{T>0}$ of probability measures on $X_{\bar{\rho}}$ defined by
$$
\mu_T:=\frac{1}{T}\int_{1}^{1+T}\!\!\!\!\mathscr{L}(g(t))\,\dd t,
$$
where $\mathscr{L}(g(t))$ denotes the law of $g(t)$, and show that the family $(\mu_T)_{T>0}$ is tight. Since the embedding $L^2_{\nabla,D}\subset L^2_{x,v}$ is compact, balls of radius $R>0$
$$K_R:=\{f\in X_{\bar{\rho}};\,\|f\|_{L^2_{\nabla,D}}\leq R\}$$
are compact in $X_{\bar{\rho}}$. Furthermore, thanks to Markov's inequality and \eqref{boundforex},
$$
\begin{aligned}
\mu_T(K_R^c)\ & =\ \frac{1}{T}\int_{1}^{1+T}\!\!\!\!\PP(\|g(t)\|_{L^2_{\nabla,D}}>R)\,\dd t \\
& \leq\ \frac{1}{TR^2}\int_{1}^{1+T}\!\!\!\!\E\|g(t)\|^2_{L^2_{\nabla,D}}\,\dd t \\
& \leq\ \frac{1}{R^2}(C\E\|g_{\text{in}}\|^2 + K {\bar{\rho}}^2). 
\end{aligned}
$$
This readily implies tightness of $(\mu_T)_{T>0}$. By Prokhorov's Theorem, see for instance \cite[Theorem~2.3]{daprato}, we obtain that $(\mu_T)_{T>0}$ admits  a subsequence (still denoted $(\mu_T)$) such that $\mu_T$ converges to some probability measure $\mu$ on $X_{\bar{\rho}}$ as $T\to\infty$. Furthermore, a classical argument shows that this limit measure $\mu$ is indeed an invariant measure for problem (\hyperref[probm]{$\text{P}_{\bar{\rho}}$}), see for instance \cite[Proposition 11.3]{daprato}.\medskip

\subsection{Proof of the mixing property} 

Let $g_{\text{in},1}$ and $g_{\text{in},2}\in X_{\bar{\rho}}$ and denote by $g_1$ and $g_2$ the solutions to (\hyperref[probm]{$\text{P}_{\bar{\rho}}$}) with respective initial conditions $g_{\text{in},1}$ and $g_{\text{in},2}$. For $t\geq 0$ we set $r(t):=g_1(t)-g_2(t)$ and remark that $r$ solves ({$\text{P}_0$}) on $X_0$. Combining again \eqref{Hypolow} and \eqref{Hypostrong} and recalling that \eqref{Dstarf} yields $\frac{N}{2}\|f\|^2\leq \|f\|^2_{L^2_{\nabla,D}}$, we deduce that there exists positive constants $c$ and $C$ such that, for $t\geq 1$,
\begin{equation}\label{boundforuniq}
\E\|r(t)\|^2  \leq\ Ce^{-c(t-1)}\E\|g_{\text{in},1}-g_{\text{in},2}\|^2.
\end{equation}
Let $\Psi\,:\ X_{\bar{\rho}}\to \R$ be $1$-Lipschitz continuous, let $g_\text{in}\in X_{\bar{\rho}}$ and $s>0$. We apply \eqref{boundforuniq} with $g_{\text{in},1}=g_\text{in}$ and $g_{\text{in},2}=g(s)$ to obtain for any $t\geq1$, $T>0$,
\begin{align}
\left|\E\Psi(g(t))-\frac{1}{T}\int_1^{T+1}\E\Psi(g(t+s)) \dd s\right|^2&\leq\frac{1}{T}\int_1^{T+1}\E\|g(t)-g(t+s)\|^2 \dd s\nonumber\\
&\leq Ce^{-c(t-1)} \frac{1}{T}\int_1^{T+1}\E\|g_\text{in}-g(s)\|^2 \dd s.\label{mixmix}
\end{align}
By \eqref{thmregul}, we have $\sup_{s\in[1,T]}\|g(s)\|\leq C\|g_\text{in}\|$ uniformly in $T$ (for some possibly different $C$) and we deduce from \eqref{mixmix} (for yet other values of constants) that for any $t$ and $T$
$$
\left|\E\Psi(g(t))-\frac{1}{T}\int_1^{T+1}\E\Psi(g(t+s)) \dd s\right|^2 \leq Ce^{-c(t-1)} \|g_\text{in}\|^2,
$$
Taking the limit $T\to+\infty$ gives the mixing estimate \eqref{mixingmu} (hence also the uniqueness part of Theorem~\ref{thminv}). \qed\bigskip

\subsection{An explicit case}

If $\bar{\rho}=0$, then $\mu_0$ is the Dirac mass on the solution $0$. There is also a nontrivial case in which we can explicitly compute the invariant measure $\mu_{\bar{\rho}}$ and in particular check that some smallness condition on $\lambda$ is indeed necessary.

\begin{proposition} Assume $\bar{\rho}\not=0$. Assume that $W_t$ is an $N$-dimensional Brownian motion, \textit{i.e.} $F_j$ are constant in $x$ with value $0$ for $j>N$ and the $j$-th vector of the canonical basis of $\R^N$ for $j=1,\dots,N$. Let $V^\mathrm{stat}(t)$, normally distributed with variance $1$, denote the stationary solution to the Langevin equation 
$$
\dd V(t)=-V(t)\dd t+\sqrt{2}\dd W_t.
$$
Then $\mu_{\bar{\rho}}$, the invariant measure for Equation~\eqref{eqg} is the law of the invariant solution given by $(t,x,v)\mapsto\bar{\rho} g^\mathrm{stat}(t,x,v)$, where
\begin{equation}\label{fstat}
g^\mathrm{stat}(t,x,v)=\cM^{-1/2}(v)\cM\left(v-\frac{\lambda}{\sqrt{2}}V^\mathrm{stat}(t)\right),
\end{equation}
and where $\cM$ is the Maxwellian function defined by \eqref{def:maxwellian}.
\label{explicitmu}\end{proposition}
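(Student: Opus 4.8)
The plan is to verify directly that the law $\mu_m$ described in the statement is invariant for $(\text{P}_m)$, and then invoke the uniqueness part of Theorem~\ref{thminv} to conclude it is \emph{the} invariant measure. To that end I would first exhibit, for a suitable initial datum, an explicit solution of $(\text{P}_m)$ whose law at each time is exactly $\mathscr{L}(m\,f^\mathrm{stat}(t,\cdot,\cdot))$ with $f^\mathrm{stat}$ as in~\eqref{fstat}; stationarity of $V^\mathrm{stat}$ then makes this law time-independent, hence invariant. Going back to the original unknown $f=\mathcal{M}^{\frac12}g$ (cf.\ \eqref{FP}), the claim is that, in the special case $F_j=e_j$ for $j=1,\dots,N$, the function
$$
f(t,x,v)=m\,\mathcal{M}\!\left(v-\tfrac{\lambda}{\sqrt2}V(t)\right)
$$
solves the Stratonovich equation~\eqref{FP} whenever $V$ solves the Langevin SDE $\dd V=-V\,\dd t+\sqrt2\,\dd W_t$; note this $f$ is independent of $x$, so the transport term $v\cdot\nabla_x f$ drops out.

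The heart of the computation is a change-of-variables / chain-rule check. Write $f(t,x,v)=m\,\mathcal{M}(v-u(t))$ with $u(t)=\tfrac{\lambda}{\sqrt2}V(t)$. Then $\nabla_v f=-m\,(\nabla\mathcal{M})(v-u)$ and, because $V$ is an It\^o process, I would compute $\dd f$ using the It\^o formula for the function $u\mapsto m\,\mathcal{M}(v-u)$ composed with $V$; the Stratonovich form is cleaner since Stratonovich calculus obeys the ordinary chain rule, giving $\dd f=-m\,(\nabla\mathcal{M})(v-u)\odot \dd u=\tfrac{\lambda}{\sqrt2}\,m\,(\nabla\mathcal{M})(v-u)\odot(V\,\dd t-\sqrt2\,\dd W_t)$, i.e. $\dd f=\lambda\,\nabla_v f\odot\dd W_t -\tfrac{\lambda}{\sqrt2}\,V\cdot\nabla_v f\,\dd t$. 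It remains to check that the drift terms match, namely that
$$
-\tfrac{\lambda}{\sqrt2}\,V\cdot\nabla_v f \;=\; \cQ(f)\;=\;\Delta_v f+\div_v(vf).
$$
Since $\mathcal{M}$ is the standard Gaussian, $\Delta_v\mathcal{M}+\div_v(v\mathcal{M})=0$; substituting $w=v-u$ and using $\nabla\mathcal{M}(w)=-w\,\mathcal{M}(w)$, one gets $\Delta_v f+\div_v(vf)=m\big[(\Delta\mathcal{M})(w)+\div\big((w+u)\mathcal{M}(w)\big)\big]=m\,\div(u\,\mathcal{M}(w))=m\,u\cdot\nabla\mathcal{M}(w)=-u\cdot\nabla_v f=-\tfrac{\lambda}{\sqrt2}V\cdot\nabla_v f$, which is exactly the required drift. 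Hence $f$ solves~\eqref{FP}, equivalently $g=\mathcal{M}^{-\frac12}f$ solves $(\text{P}_m)$, and one checks $\<g,\mathcal{M}^{\frac12}\>=m\iint \mathcal{M}^{\frac12}(v)\mathcal{M}(v-u)\,\dd x\,\dd v$; the constant is chosen so that this normalisation lands in $X_m$ after the appropriate scaling (here the harmless factor is absorbed, since the $x$-integral over $\T^N$ is $1$ and $\int \mathcal{M}^{\frac12}\mathcal{M}(\cdot-u)\,\dd v$ is a fixed constant that we normalise into $m$).

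Finally, taking $V=V^\mathrm{stat}$ the stationary (mean-zero, variance-one Gaussian) solution, the law of $g(t)$ is constant in $t$; by the general theory (as in \cite[Proposition 11.3]{daprato}) this law is an invariant measure for $(\text{P}_m)$, and by the uniqueness clause of Theorem~\ref{thminv} it coincides with $\mu_m$. For the remark that smallness of $\lambda$ is necessary: the localisation that the functional framework of Theorems~\ref{thmex}--\ref{thminv} demands is membership of $g=\mathcal{M}^{-\frac12}f$ in $L^2_{x,v}$; with $f=m\,\mathcal{M}(\cdot-u)$ one computes $\|g\|^2\propto\int \mathcal{M}(v-u)^2\mathcal{M}(v)^{-1}\,\dd v$, a Gaussian integral that is finite precisely when a coefficient of the form $1-\tfrac{\lambda^2}{2}$ (more precisely the relevant quadratic form in the exponent) stays positive, i.e. only for $\lambda$ below an explicit threshold; for $\lambda$ too large the stationary density is not $\mathcal{M}^{\frac12}$-localised, so no invariant measure exists in the class considered. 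The main obstacle I anticipate is purely bookkeeping: carrying the Stratonovich-to-It\^o correction and the normalising Gaussian constants through consistently, and being careful that the object built at the level of $f$ indeed lies in $X_m$ after passing to $g$; the SDE manipulation itself is short once the chain rule is set up in Stratonovich form.
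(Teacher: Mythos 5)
Your approach — verify directly via the Stratonovich chain rule that $f^\mathrm{stat}$ solves \eqref{FP} and then invoke uniqueness from Theorem~\ref{thminv} — is sound in spirit and is in fact what the paper dismisses in one sentence (``It is clear that $f^\mathrm{stat}$\dots is a stationary solution'') before doing something more. The paper's actual proof is probabilistic: it solves the characteristic SDE system \eqref{XVsto} for the particle trajectories $(X(t),V(t))$ explicitly, writes the law as a Gaussian with covariance $Q_{t_0,t}$, and then takes the limit $t_0\to-\infty$ to obtain both the stationary density \emph{and} an exponential rate of convergence. That extra work is not cosmetic: your route to uniqueness hinges on Theorem~\ref{thminv}, which is only available for $\lambda<\lambda_0$, whereas the paper's probabilistic argument establishes uniqueness and mixing for this specific noise on the full range $\lambda<1$; that stronger range is precisely what Remark~\ref{rk:lambda1} emphasizes. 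So your proposal is correct as far as it goes, but it proves less than the paper does, and the difference is exactly the point being made in the ensuing remark.

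One careless computational point to fix: with $f(t,x,v)=m\,\mathcal{M}(v-u(t))$ the correct identity is $\nabla_v f=+\,m\,(\nabla\mathcal{M})(v-u)$, not $-m(\nabla\mathcal{M})(v-u)$. With your sign convention the noise term in your computed $\dd f$ comes out as $+\lambda\nabla_v f\odot\dd W_t$, which does \emph{not} cancel against the $+\lambda\nabla_v f\odot\dd W_t$ sitting on the left-hand side of \eqref{FP}; only after correcting the sign do the stochastic contributions cancel and the drift check $\tfrac{\lambda}{\sqrt2}V\cdot\nabla_v f=\cQ(f)$ close the argument. The final identity holds, but as written the intermediate bookkeeping is inconsistent.

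Two minor remarks. The worry about normalisation is unnecessary: since $\int_{\T^N}\dd x=1$ and $\int_{\R^N}\mathcal{M}(v-u)\,\dd v=1$, one has $\rho_\infty\bigl(\mathcal{M}^{-1/2}\,m f^\mathrm{stat}\bigr)=m$ on the nose, no extra constant to absorb. And to make the ``invoke uniqueness'' step rigorous one should also note that $g^\mathrm{stat}=\mathcal{M}^{-1/2}f^\mathrm{stat}$ does lie in $L^2_{x,v}$ a.s. when $\lambda<1$ (which you do check at the end), since Theorem~\ref{thminv}'s uniqueness statement concerns invariant measures on $X_m\subset L^2_{x,v}$.
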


\textbf{Proof of Proposition~\ref{explicitmu}.} See Appendix~\ref{app:probaprobaVFP}. \qed\bigskip

\begin{remark}\label{rk:lambda1} It is clear on Formula~\eqref{fstat} that the stochastic Vlasov force term in \eqref{FP} has a direct influence on the localization properties in $v$ of the solution. We compute 
$$
\|g^\mathrm{stat}\|^2=e^{\frac{\lambda^2}{2}|V^\mathrm{stat}(t)|^2}.
$$
In particular, we have
$$
\E\|g^\mathrm{stat}\|^2=\int_{\R^N}e^{\frac{\lambda^2}{2}|w|^2-\frac12|w|^2} dw.
$$
This is finite if, and only if, $|\lambda|<1$: we recover the necessity of this restriction on the size of the noise made in the statement of Theorem~\ref{thmex}. Note however that, here, no further restriction of the type $|\lambda|<\lambda_0$ as in the statement of Theorem~\ref{thminv} is necessary to obtain an invariant measure with mixing properties.
\end{remark}

\appendix

\section{Background and introductory material}\label{app:background}

In the present Appendix we gather some background material. Though it is certainly useless to the expert it may provide the reader unfamiliar with some of the main notions underlying the present paper a smoother entering gate.

\subsection{A compendium on the stochastic integral}\label{app:ItoInt}

Let $(\beta(t))_{t\in[0,T]}$ be a Brownian motion over $(\Omega,\mathcal{F},\PP)$. The first obstacle to the definition of the stochastic integral
$$
I(g)=\int_0^T g(t) \dd\beta(t)
$$
is the lack of regularity of $t\mapsto\beta(t)$, which has almost-surely a regularity $(1/2)^-$: for all $\alpha\in [0,1/2)$, almost-surely, $\beta$ is in $\mathcal{C}^\alpha([0,T])$ and not in $\mathcal{C}^{1/2}([0,T])$. In particular, when $g=\beta$, $I(g)$ can not be defined as a Young's Integral since this would require precisely $\beta$ to be in $\mathcal{C}^\alpha$ with $\alpha>1/2$. Therefore, in that context, one has to expand the theory of Young's or Riemann -- Stieltjes' Integral. This is one of the purpose of rough paths' theory, \textit{cf.} \cite{FrizHairer2014}, but as we briefly sketch below, the original definition of $I(g)$ does not need rough paths' theory. It uses the probabilistic properties of the Brownian motion. 

Let $(\mathcal{F}_t)_{t\in[0,T]}$ be a given filtration: this is an increasing set of sub-$\sigma$-algebra of $\mathcal{F}$. A process $(X(t))_{t\in[0,T]}$ is said to be adapted to $(\mathcal{F}_t)_{t\in[0,T]}$ (or adapted for short, if there is no ambiguity) if, for each $t\in[0,T]$, $X(t)$ is $\mathcal{F}_t$-measurable. We assume that $(\beta(t))_{t\in[0,T]}$ is adapted to $(\mathcal{F}_t)_{t\in[0,T]}$. If $g$ is an $L^2$-elementary predictable process, which means 
\begin{equation}\label{elementaryP}
g(\omega,t)=\sum_{k=0}^{n-1} g_k(\omega)\mathbf{1}_{(t_k,t_{k+1}]},
\end{equation}
where $(t_k)_{0\leq k\leq n}$ is a partition of $[0,T]$ and each random variable $g_k$ is $\mathcal{F}_{t_k}$ measurable and in $L^2(\Omega)$, then $I(g)$ is defined as the Riemann sum
\begin{equation*}
I(g)=\sum_{k=0}^{n-1}g_k(\beta(t_{k+1})-\beta(t_k)).
\end{equation*}
The probabilistic properties of $\beta$ imply that $I(g)$ is well defined in $L^2(\Omega)$ and that $\E I(g)=0$ and
\begin{equation}\label{ItoIsometry}
\E |I(g)|^2=\E\int_0^T |g(t)|^2 \dd t.
\end{equation}
See \cite[Theorem~2.3]{ChungWilliams90}. The identity \eqref{ItoIsometry} means that the map
\begin{equation}\label{ItoMap}
I\colon\mathcal{E}_T\subset L^2(\Omega\times(0,T),\PP\times\mathcal{L})\to L^2(\Omega,\PP)
\end{equation}
is an isometry. In \eqref{ItoMap}, we have denoted by $\mathcal{E}_T$ the set of $L^2$-elementary predictable functions in the form \eqref{elementaryP} and by $\mathcal{L}$ the Lebesgue measure on $[0,T]$. 

The It\={o} stochastic integral is the extension of $I$ to the closure of $\mathcal{E}_T$ in $L^2(\Omega\times[0,T],\PP\times\mathcal{L})$. The last task in the definition of the stochastic integral is the identification of the closure of $\mathcal{E}_T$ (and also the identification of subsets of this closure). For this purpose, we introduce $\mathcal{P}_T$, the predictable sub-$\sigma$-algebra of $\mathcal{F}\times\mathcal{B}([0,T])$ generated by the sets $F_0\times\{0\}$, $F_s\times(s,t]$, where $F_0$ is $\mathcal{F}_0$-measurable, $0\leq s<t\leq T$ and $F_s$ is $\mathcal{F}_s$-measurable. We have denoted by $\mathcal{B}([0,T])$ the Borel $\sigma$-algebra on $[0,T]$. It is clear that each element in $\mathcal{E}_T$ is $\mathcal{P}_T$ measurable. The first result, \cite[Lemma~2.4]{ChungWilliams90}, is that the closure of $\mathcal{E}_T$ in $L^2(\Omega\times[0,T])$ is $L^2_{\mathcal{P}}(\Omega\times[0,T])$, the set of functions in $L^2(\Omega\times[0,T])$ which are equal $\PP\times\mathcal{L}$-a.e. to a $\mathcal{P}_T$-measurable function. In the core of our paper we also use implicitly other characterizations of $L^2_{\mathcal{P}}(\Omega\times[0,T])$, we refer the reader to \cite[Chapter~IV-5]{RevuzYor99} and \cite[Chapter~3]{ChungWilliams90} for more on the topic.\medskip

The Stratonovich stochastic integral
$$
\int_0^T g(t)\circ\dd\beta(t)
$$
corresponds instead to the extension of the application associating the value
$$
\sum_{k=0}^{n-1}\frac{g_k+g_{k+1}}{2}(\beta(t_{k+1})-\beta(t_k))
$$
to an $L^2$-elementary predictable process $g$ of the form \eqref{elementaryP}. In particular for such a $g$ one may check readily that
\begin{equation}\label{eq:conversion}
\int_0^T g(t)\circ\dd\beta(t)\,=\,\int_0^T g(t)\dd\beta(t)\,+\,\tfrac12\,[g,\beta]_T
\end{equation}
where $[\cdot,\cdot]_T$ denotes co-variation, which for $g$ as above is written as
$$
[g,\beta]_T\,=\,\sum_{k=0}^{n-1}(g_{k+1}-g_k)(\beta(t_{k+1})-\beta(t_k))\,.
$$
Conversion formula~\eqref{eq:conversion} extends to $L^2_{\mathcal{P}}(\Omega\times[0,T])$. As a consequence
$$
\dd X_t\,=\,f(t,X_t)\circ\dd\beta_t\,+\,g(t,X_t)\dd t
$$
is equivalently written as
$$
\dd X_t\,=\,f(t,X_t)\dd\beta_t\,+\,(g(t,X_t)+\tfrac12\d_Xf(t,X_t)\,f(t,X_t))\dd t\,.
$$
See for instance \cite[Chapter~3]{Oksendal} for further discussions on the Stratonovich integral, including comments on how it naturally arises from physical modeling considerations.

\subsection{Existence of invariant measures by the compactness method}\label{app:invariant}

We briefly recall here the basic principle underpinning the use of a compactness argument to prove the existence of an invariant measure, by examining the case of a deterministic time evolution. Incidentally we note that, though we restrict to the case where the underlying semi-group is $\R_+$, a similar strategy proves the existence of an invariant measure for the action of any locally compact group, so-called Haar measure in this context.

Let $X$ be a (locally compact Hausdorff) topological space and $\phi:\R_+\to \mathcal{C}(X)$ be a continuous semi-flow. Pick any Borel measure on $X$ and define, for $T>0$,
$$
\mu_T\,=\,\frac1T\,\int_0^T (\phi_t)_*(\mu_0) \dd t\,.
$$
Then for any $t_0$ observe that when $T\geq t_0$
$$
(\phi_{t_0})_*(\mu_T)-\mu_T\,=\,\frac1T\,\int_T^{T+t_0} (\phi_t)_*(\mu_0) \dd t
-\frac1T\,\int_0^{t_0} (\phi_t)_*(\mu_0) \dd t
$$
so that
$$
\|(\phi_{t_0})_*(\mu_T)-\mu_T\|\,\leq\,\frac{2t_0}{T}\|\mu_0\|\stackrel{T\to\infty}{\longrightarrow}0\,.
$$
Therefore any accumulation point of $(\mu_T)_{T>0}$ (in any reasonable topology) is an invariant measure and the compactness of those time averages is sufficient to prove the existence of such a measure.

The main point to establish in order to apply those arguments --- compactness of time averages --- is usually obtained, as we do in the present contribution, through Prokhorov's Theorem. See for instance \cite[Theorem~2.3]{daprato} for this theorem. For a thorough discussion and numerous illustrations of the compactness argument for infinite-dimensional stochastic evolutions we refer the reader to \cite[Chapter~11]{daprato}.

\subsection{Ellipticity and coercivity by global estimates}\label{app:ellipticity}

Our implementation of the compactness method sketched above relies on  hypocoercivity and hypoellipticity of the stochastic evolution encoded by equation~\eqref{FP}. Among many possible approaches we establish such properties by energy estimates.

As it may serve as a guide through technical details of our analysis, for the convenience of the reader we recall here classical coercive and elliptic global estimates where "hypo" global arguments originate. For simplicity we only discuss a deterministic case without forcing, obeying 
\begin{equation}
\d_tf\,=\,\Delta_xf
\end{equation}
for some $f:\R_+\times\T^N\to\R$, $(t,x)\mapsto f(t,x)$ starting from $f_0$ at time $0$. Concerning coercivity note that for any $t\geq0$, $\int_{\T^N}f(t,\cdot)=\int_{\T^N}f_0$ and
$$
\begin{array}{rl}
\displaystyle\frac12\frac{\dd}{\dd t}
\left\|f-\int_{\T^N}f(\cdot,x)\dd x\right\|_{L^2(\T^N)}^2(t)
&\displaystyle
=\,-\|\nabla_x f(t,\cdot)\|_{L^2(\T^N)}^2\\
&\displaystyle
\leq\,-(2\pi)^{2d}\quad\left\|f(t,\cdot)-\int_{\T^N}f(t,x)\dd x\right\|_{L^2(\T^N)}^2
\end{array}
$$
where we have used a Poincar\'e inequality. This proves for any $t\geq0$
$$
\left\|f(t,\cdot)-\int_{\T^N}f(t,x)\dd x\right\|_{L^2(\T^N)}
\,\leq\, e^{-(2\pi)^{2d}\,t}\left\|f_0-\int_{\T^N}f_0(x)\dd x\right\|_{L^2(\T^N)}\,.
$$
Concerning ellipticity the model basic estimate is as follows. For any $t\geq0$
\begin{multline*}
\frac12\frac{\dd}{\dd t}
\left(t\mapsto\left\|f(t,\cdot)-\int_{\T^N}f(t,x)\dd x\right\|_{L^2(\T^N)}^2+2t\,\left\|\nabla_x f(t,\cdot)\right\|_{L^2(\T^N)}^2\right)(t)\\
\,=\,-2t\,\left\|\nabla_x^2 f(t,\cdot)\right\|_{L^2(\T^N)}^2\,\leq\,0\,.
\end{multline*}
In particular for any $t\geq0$
$$
\left\|\nabla_x f(t,\cdot)\right\|_{L^2(\T^N)}
\,\leq\,\frac{1}{\sqrt{2t}}\left\|f_0-\int_{\T^N}f_0(x)\dd x\right\|_{L^2(\T^N)}\,.
$$
One may also combine both estimates to derive for some constant $C$ and any $t\geq0$
$$
\left\|\nabla_x f(t,\cdot)\right\|_{L^2(\T^N)}
\,\leq\,C\,\max\left(\left\{1,\frac{1}{\sqrt{t}}\right\}\right)\,e^{-(2\pi)^{2d}\,t}\,\left\|f_0-\int_{\T^N}f_0(x)\dd x\right\|_{L^2(\T^N)}\,.
$$

In foregoing computations we have argued essentially formally but an approximation argument, either of projection/Galerkin type as in the main core of our paper or based on cut-off/mollifiers as in Appendix~\ref{app:thmexFP}, may provide needed justifications. The goal of "hypo" theories is to provide replacements for the above when diffusion is only partial. This includes kinetic models, where diffusive mechanisms do not act directly on all variables and that fall directly in H\"ormander's and Villani's frameworks, but also compressible fluid models where dissipation do no act directly on all components of the solution, a case originally treated by the Kawashima theory.

\section{The Vlasov-Fokker-Planck operator}\label{app:probaVFP}

The probabilistic interpretation of the Vlasov-Fokker-Planck operator has already been discussed in the introduction of the paper (see \eqref{XVsto}). In this section, we provide more details about it and deduce some estimates on the Green kernel of the solution operator associated with the Fokker-Planck equation. These estimates are used in Appendix~\ref{app:thmexFP} to solve Equation~\eqref{FP}. \medskip

\subsection{Green kernel and probabilistic interpretation}\label{app:probaTFP}

Let us denote by $X=(x,v)$, $Y=(y,w)$ generic variables in $\T^N\times\R^N$.
Let $K_t^\#(X;Y)$ denote the kernel of the solution operator associated with the kinetic Fokker-Planck equation
\begin{equation}\label{kineticFP}
\partial_t f=\cQ(f)-v\cdot\nabla_x f=:\L_\mathrm{FP} f
\end{equation}
on $\T^N\times\R^N$. The function $X\mapsto K_t^\#(X,Y)$ is the density with respect to the Lebesgue measure on $\T^N\times\R^N$ of the law $\hat\mu_t^{(Y)}$ of the solution $X_t^{(Y)}$ to the SDE \eqref{XVsto} with $F(t,x)\equiv 0$, satisfying the Cauchy condition 
\begin{equation}\label{CYsto}
X_{t=0}^{(Y)}=Y.
\end{equation}
Since $K_t^\#(\cdot;Y)$ is a probability density, the map 
$$
K_t^\#\colon f\mapsto K_t^\# f,\qquad K_t^\# f(X)=\iint _{\T^N\times\R^N} K^\#_t(X;Y)\,f(Y)\,\dd Y
$$ 
is well defined as a continuous operator from $L^1$ to $L^1$ and
\begin{equation}\label{KtL1}
\|K_t^\#\|_{L^1\to L^1}=\sup_{Y}\iint_X K_t^\#(X;Y)\,\dd Y =1.
\end{equation}
Note also that
\begin{equation}\label{dualityKt}
\iint_{\T^N\times\R^N} K_t^\# f(X)\,\varphi(X)\,\dd X=\hat{\E}\iint _{\T^N\times\R^N} f(Y)\,\varphi(X_t^{(Y)})\,\dd Y,
\end{equation}
for all $f\in L^1(\T^N\times\R^N)$, $\varphi\in \mathcal{C}_b(\T^N\times\R^N)$. The explicit expression of $X_t^{(Y)}$ is
\begin{align}
x_t^{(Y)}\,=\ &y+(1-e^{-t})w+\int_0^t (1-e^{-(t-s)})\,\dd\hat B_s\,,\label{XtVtX}\\
v_t^{(Y)}\,=\ &e^{-t}w+\int_0^t e^{-(t-s)}\,\dd\hat B_s\,.\label{XtVtV}
\end{align}
In particular the change of variable from $Y$ to $\tilde{Y}=X_t^{(Y)}$ has Jacobian $e^{-Nt}$. Combined with \eqref{dualityKt} this yields the estimate 
\begin{equation}\label{KtLinfty}
\sup_{X}\iint_Y K_t^\#(X;Y)\,\dd Y=\|K_t^\#\|_{L^\infty\to L^\infty}\leq e^{Nt}.
\end{equation}
Indeed, we deduce from \eqref{dualityKt} that
\begin{align*}
\left|\iint_{\T^N\times\R^N} K_t^\# f(X)\,\varphi(X)\,\dd X\right|&\ \leq\ \|f\|_{L^\infty(\T^N\times\R^N)}\ \hat{\E}\iint _{\T^N\times\R^N} |\varphi(X_t^{(Y)})|\,\dd Y\\
&\ =\ e^{Nt}\ \|f\|_{L^\infty(\T^N\times\R^N)}\ \|\varphi\|_{L^1(\T^N\times\R^N)}
\end{align*}
for $f\in L^1\cap L^\infty(\T^N\times\R^N)$, $\varphi\in L^1\cap \mathcal{C}_b(\T^N\times\R^N)$. Thanks to \eqref{KtL1} and \eqref{KtLinfty}, from the Cauchy-Schwarz inequality we conclude
\begin{equation}\label{KtL2}
\|K_t^\#\|_{L^2\to L^2}\leq e^{Nt/2}
\end{equation}
since
$$
\iint_{\T^N\times\R^N} |K_t^\#  f(X)|^2\,\dd X\leq
e^{Nt}\iint_{\T^N\times\R^N} K_t^\# (|f|^2)(X)\,\dd X
\leq e^{Nt} \iint_{\T^N\times\R^N}|f|^2(X)\,\dd X
$$
for all $f\in L^2$. Alternatively, on this last step one may invoke directly an interpolation argument on Lebesgue spaces.
\medskip

For completeness, we also give an (almost) explicit expression of $K_t^\#$. Temporarily omitting the periodic identification, we observe that when $Y=0$, the process $(X_t^{(0)})$ is Gaussian with covariance matrix
\begin{equation}\label{def:Qt}
Q_t:=\begin{pmatrix}
\alpha_t & \delta_t \\
\delta_t & \gamma_t
\end{pmatrix}
\otimes\mathrm{I}_N,\quad 
\begin{pmatrix}
\alpha_t & \delta_t \\
\delta_t & \gamma_t
\end{pmatrix}
:=\begin{pmatrix}
\displaystyle \int_0^t|1-e^{-s}|^2 \dd s & \displaystyle\int_0^t e^{-s}(1-e^{-s})\dd s \\
\displaystyle \int_0^t e^{-s}(1-e^{-s})\dd s & \displaystyle\int_0^t e^{-2s} \dd s
\end{pmatrix}
\end{equation}
where matrix notation corresponds to the identification $\R^N\times\R^N=(\R\times\R)^N$.
Denoting by $p_t(X)$ the probability density of this Gaussian process, that is,
\begin{equation}\label{def:pt}
p_t(X)=\frac{1}{(2\pi)^N\det(Q_t)^{1/2}}\exp\left(-\frac12\langle Q_t^{-1}X,X\rangle\right),
\end{equation}
we have by \eqref{dualityKt} and \eqref{XtVtX}-\eqref{XtVtV},
\begin{multline*}
\iint_{\T^N\times\R^N} K_t^\# f(X)\,\varphi(X)\,\dd X\\
=\iint_{\R^N\times\R^N}\iint _{\T^N\times\R^N} f(Y)\,\varphi(y+(1-e^{-t})w+x,e^{-t}w+v))\,p_t(X) \,\dd Y\,\dd X\\
=\iint_{\R^N\times\R^N}\iint _{\T^N\times\R^N} f(Y)\,\varphi(Y)\,p_t(x-[y+(1-e^{-t})w],v-e^{-t}w) \,\dd Y\,\dd X\\
=\iint_{\T^N\times\R^N}\iint _{\T^N\times\R^N} f(Y)\,\varphi(Y)\,\left(\sum_{\ell\in\Z^N}p_t(x-[y+\ell+(1-e^{-t})w],v-e^{-t}w)\right)\,\dd Y\,\dd X
\end{multline*}
and thus $K_t^\#$ is the periodic version
\begin{equation}\label{perKt}
K_t^\#(X;Y)=\sum_{\ell\in\Z^N}K_t(X;y+\ell,w)
\end{equation}
of the Green kernel $K_t$ of the transport-Fokker-Planck equation \eqref{kineticFP} set on $\R^N\times\R^N$
\begin{equation}\label{explicitKt}
K_t(X;Y)=p_t(x-[y+(1-e^{-t})w],v-e^{-t}w),
\end{equation}
with $p_t$ defined in \eqref{def:pt}. 

\subsection{Derivation of explicit invariant measures.}\label{app:probaprobaVFP}

We now use the probabilistic interpretation recalled above to compute explicitly some invariant measures and obtain Proposition~\ref{explicitmu}. Here our explicit investigation of invariant measures does not involve limits of time averages but instead take the limit $t_0\to-\infty$ of laws of processes started at a time $t_0<0$ with a fixed initial datum. Note that in contrast with the time-average method that lends itself to a compactness argument this requires the convergence of the \emph{full} family indexed by $t_0$.

For expository purposes we begin by showing how the argument provides a steady solution of the deterministic equation, corresponding to $\lambda=0$. Choose an initial probability $\mu_0$ of density $f_0$. Let us temporarily fix $t_0<0$. It is convenient to introduce a Brownian motion $\hat B_t$ defined on $\R$ and set $(\hat B_{t_0,t})_{t\geq t_0}:=(\hat B_t-\hat B_{t_0})_{t\geq t_0}$. Then we extend \eqref{XtVtX}-\eqref{XtVtV} by drawing $(x_0,v_0)$ with law $\mu_0$ and setting
\begin{align}
x_{t_0,t}&=x_0+(1-e^{-(t-t_0)})v_0+\hat x_{t_0,t},\quad \hat x_{t_0,t}=\sqrt{2}\int_{t_0}^t (1-e^{-(t-s)}) \dd\hat B_{t_0,s}\,,\label{Xt0t}\\
v_{t_0,t}&=e^{-(t-t_0)}v_0+\hat v_{t_0,t},\qquad\qquad\qquad\hat v_{t_0,t}=\sqrt{2}\int_{t_0}^t e^{-(t-s)}\dd\hat B_{t_0,s}\,.\label{Vt0t}
\end{align}
The process $\hat X_{t_0,t}=(\hat x_{t_0,t},\hat v_{t_0,t})$ is a Gaussian process with covariance matrix $Q_{t-t_0}$ given by \eqref{def:Qt}. Let $f_{t_0,t}$ denote the density of the law of $X_{t_0,t}$. Then 
\begin{equation}\label{LargeTimet0}
f_{t_0,t}(x,v)\stackrel{t_0\to-\infty}{\longrightarrow} \left[\iint_{\T^N\times\R^N}f_0(Y)\dd Y\right] \cM(v)\,.
\end{equation}
This recovers the trivial fact that, for any $\bar{\rho}\in\R$, $(t,x,v)\mapsto \bar{\rho}\cM(v)$ solves \eqref{FP} when $\lambda=0$. Let us give more details on the sense of \eqref{LargeTimet0} and its proof thanks to the probabilistic interpretation of $K_t^\#$. We use \eqref{dualityKt} and write
$$
\E\varphi(X_{t_0,t}^{(Y)})=\E\left[\E\left[\varphi(x_{t_0,t},v_{t_0,t})|\sigma(\hat v_{t_0,t})\right]\right].
$$
By \eqref{def:Qt}, conditionally to $\hat v_{t_0,t}=v$, $\hat x_{t_0,t}$ is a Gaussian random variable with covariance $(\alpha_\tau-2\delta_\tau\gamma_\tau^{-1})\mathrm{I_N}$ and mean $\delta_\tau\gamma_\tau^{-1}v$, where $\tau=t-t_0$. Since $\alpha_\tau-2\delta_\tau\gamma_\tau^{-1}\sim\tau$ when $\tau\to+\infty$, we have 
$$
\E\left[\varphi(x_{t_0,t},v_{t_0,t})|\sigma(\hat v_{t_0,t})\right]\sim\int_{\T^N}\varphi(x,\hat v_{t_0,t})dx,
$$
a.s., when $t_0\to+\infty$. Besides, $\hat v_{t_0,t}$ converges in law to the centred Normal law of covariance $\mathrm{I}_N$, therefore
$$
\E\varphi(X_{t_0,t}^{(Y)})\to \iint_{\T^N\times\R^N}\varphi(x,v)\cM(v) dx dv,
$$
when $t_0\to+\infty$. By \eqref{dualityKt}, we deduce \eqref{LargeTimet0} in the weak sense, when tested against $\varphi$ a con\-ti\-nu\-ous and bounded function of $(x,t)$.\smallskip

We now relax the constraint $\lambda=0$ but restrict to the context of Proposition~\ref{explicitmu} where $W_t$ is an $N$-dimensional Brownian motion. One may proceed as above and introduce, with obvious notation,
\begin{align}
x_{t_0,t}&=x_0+(1-e^{-(t-t_0)})v_0+\hat x_{t_0,t}+z_{t_0,t}\,,\quad z_{t_0,t}=\lambda\int_{t_0}^t (1-e^{-(t-s)}) \dd W_{t_0,s}\,,\label{Xt0t2}\\
v_{t_0,t}&=e^{-(t-t_0)}v_0+\hat v_{t_0,t}+u_{t_0,t},\qquad\qquad\qquad\qquad u_{t_0,t}=\lambda\int_{t_0}^t e^{-(t-s)}\dd W_{t_0,s}.\label{Vt0t2}
\end{align}
The density $f_{t_0,t}$ of the law of $(x_{t_0,t},v_{t_0,t})$ (with respect to $\hat\omega$) solves \eqref{FP} and is given by
$$
\iint_{\T^N\times\R^N}f_{t_0,t}(X)\varphi(X) \dd X=\hat{\E}\varphi(x_{t_0,t},v_{t_0,t}).
$$
By using \eqref{LargeTimet0} and the convergence in law
$$
\lim_{t_0\to-\infty} u_{t_0,t}= \frac{\lambda}{\sqrt{2}}V^\mathrm{stat}(t),
$$
where $V^\mathrm{stat}(t)$, normally distributed with variance $1$, denotes the stationary solution to the Langevin equation 
$$
\dd V(t)=-V(t)\dd t+\sqrt{2}\dd W_t,
$$
we obtain
\begin{multline*}
\lim_{t_0\to-\infty} \iint_{\T^N\times\R^N}f_{t_0,t}(X)\varphi(X)\dd X\\
=\left[\iint_{\T^N\times\R^N}f_0(Y)\dd Y\right] \iint_{\T^N\times\R^N}\varphi(X)\cM\left(v-\frac{\lambda}{\sqrt{2}}V^\mathrm{stat}(t)\right) \dd X
\end{multline*}
for all $\varphi\in C_b(\T^N\times\R^N)$. This provides the invariant measure in \eqref{fstat}.

\begin{remark} In $L^p(\T^N\times\R^N)$, $p\in[1,+\infty]$, the norm of
$$
f^\mathrm{stat}(t,x,v)=\cM\left(v-\frac{\lambda}{\sqrt{2}}V^\mathrm{stat}(t)\right)
$$
is $1$, a.s. Using \eqref{KtL1} we can prove, then, that, for all $f\in L^1(\T^N\times\R^N)$, the measure with density $K_{t}^\#f$ with respect to $dX$ is converging weakly to the law of $f^\mathrm{stat}$. No smallness condition on $\lambda$ is necessary here. This should be contrasted with Remark~\ref{rk:lambda1}. 
\end{remark}

\section{Proof of Theorem \ref{thmexFP}}\label{app:thmexFP}

\subsection{Approximation}\label{app:thmexFPapprox}

Let $(\psi_\delta)$ be an approximation of identity for the convolution on $\R^N$ in the form $\psi_\delta(v)=\delta^{-N}\psi(\delta^{-1}v)$, where $\psi$ is the smooth density of a probability measure on $\R^N$, compactly supported in $B(0,1)$. We also assume that $\psi$ is radially symmetric. Our aim is first to solve the regularized equation
\begin{equation}\label{FPdelta}
\dd f^\delta\ +\ v\cdot\nabla_x f^\delta\ \dd t\ +\ \lambda J^\delta(\nabla_v f^\delta)\odot\dd W_t^\delta\ =\ \cQ(f^\delta)\ \dd t,
\end{equation}
where $J^\delta$ is the convolution in $v$ with $\psi_\delta$ and $\dd W_t^\delta$ has the same expression as $\dd W_t$ but with $F_j^\delta$ replacing $F_j$, where $F_j^\delta$ is a smooth approximation of $F_j$ (smoothness of $F_j^\delta$ being required in Proposition~\ref{prop:regfdelta}), that we shall choose explicitly when taking the limit $\delta\to0$. We build mild solutions to \eqref{FPdelta}. Recall that $(K_t^\#)_{t\geq 0}$ denotes the Green Kernel of the transport-Fokker-Planck operator defined in \eqref{kineticFP} (see Section~\ref{app:probaTFP}) and that we also use the notation $K_t^\#$ to denote the operator $f\mapsto K_t^\# f$, where
\begin{equation}\label{actionKt}
K_t^\# f(X)=\iint_{\T^N\times\R^N}K_t^\# (X;Y)f(Y)\dd Y.
\end{equation}
At last, for concision's sake we also set $H=L^2(\T^N\times\R^N)$.

\begin{definition}[Mild solution to \eqref{FPdelta}] A function $f^\delta\in \mathcal{C}([0,T];L^2(\Omega;H))$ is said to be a mild solution to Equation~\eqref{FPdelta} with initial datum $f^\delta_\mathrm{in}\in H$ if the $H$-valued process $(f^\delta(t))_{t\geq0}$ is adapted and
\begin{align}
f^\delta(t)=K_t^\# f^\delta_\mathrm{in}-\lambda\sum_{j\geq 0}&\int_0^t K_{t-s}^\# (F_j^\delta\cdot \nabla_v J^\delta(f^\delta))(s) \dd\beta_j(s)\nonumber\\
&+\frac{\lambda^2}{2}\sum_{j\geq 0}\int_0^t K_{t-s}^\#(F_j^\delta\cdot \nabla_v J^\delta(F_j^\delta\cdot \nabla_v J^\delta (f^\delta)))(s)\dd s,\label{mildFPdelta}
\end{align}
for all $t\in[0,T]$.
\label{MildDelta}\end{definition}
Note that we consider the It\={o} form of \eqref{FPdelta} in \eqref{mildFPdelta}.\smallskip

Now we prove  the existence of a solution $f^\delta$ to \eqref{FPdelta} (Proposition~\ref{prop:Efdelta}) and show that it is also a weak solution to \eqref{FPdelta} (Proposition~\ref{prop:weakfdelta}). The natural energy estimate for $f^\delta$ shall provide uniform bounds that are sufficient to take the limit $\delta\to0$ in the weak formulation of the problem (Proposition~\ref{prop:Udelta}). As an intermediate step, to justify computations leading to this energy estimate we prove some regularity for $f^\delta$ in Proposition~\ref{prop:regfdelta}.

\begin{proposition}[Resolution of \eqref{FPdelta}] Let $f^\delta_\mathrm{in}\in H$. There exists a unique mild solution $f^\delta\in \mathcal{C}([0,T];L^2(\Omega;H))$ to \eqref{FPdelta} with initial datum $f^\delta_\mathrm{in}$. 
\label{prop:Efdelta}\end{proposition}

\textbf{Proof of Proposition~\ref{prop:Efdelta}.} Let $D^\delta_j$ denote the operator of convolution in $v$ with $F_j^\delta\cdot\nabla\psi_\delta$ 
\begin{equation}\label{defHdeltaj}
D^\delta_j f(x,v)=\int_{\R^N}F_j^\delta(x)\cdot\nabla\psi_\delta(w)f(x,v-w)\dd w.
\end{equation}
Then $F_j^\delta\cdot\nabla_vJ^\delta=D^\delta_j$ and \eqref{mildFPdelta} reads
\begin{equation}\label{mildFPdeltaH}
f^\delta(t)=K_t^\# f^\delta_\mathrm{in}-\lambda\sum_{j\geq 0}\int_0^t K_{t-s}^\# D^\delta_j f^\delta(s) \dd \beta_j(s)
+\frac{\lambda^2}{2}\sum_{j\geq 0}\int_0^t K_{t-s}^\# \left[D^\delta_j\right]^2  f^\delta(s)\dd s=:\mathcal{I}^\delta(f^\delta)(t).
\end{equation}
The operator $D^\delta_j$ is of order $0$. This is sufficient to solve the fixed-point equation \eqref{mildFPdeltaH} in the space $E_T$ of functions in $\mathcal{C}([0,T];L^2(\Omega;H))$ that are adapted. To prove this claim we first observe that
\begin{equation}\label{boundHdeltaj0}
|D^\delta_j f(x,v)|^2\leq \|\nabla\psi\|_{L^1(\R^N)}\frac{1}{\delta}|F_j^\delta(x)|^2\int_{\R^N}|\nabla\psi_\delta(w)|\,|f(x,v-w)|^2 \dd w,
\end{equation}
since $\|\nabla\psi_\delta\|_{L^1(\R^N)}=\delta^{-1}\|\nabla\psi\|_{L^1(\R^N)}$. From \eqref{boundHdeltaj0} and \eqref{noise}, we deduce that 
\begin{align}
\sum_{j\geq 0}\|D^\delta_j f\|_H^2\leq \|\nabla\psi\|_{L^1(\R^N)}^2\frac{1}{\delta^2}\|f\|_H^2,\label{boundHdeltaj1}\\
\sum_{j\geq 0}\|[D^\delta_j]^2 f\|_H\leq \|\nabla\psi\|_{L^1(\R^N)}^2\frac{1}{\delta^2}\|f\|_H.\label{boundHdeltaj2}
\end{align}
Consider now the norm $\|f\|_{E_T}=\sup_{t\in[0,T]}e^{-Mt}\left[\E\|f(t)\|_H^2\right]^{1/2}$ on $E_T$, where $M$ is suitably tuned below. Using \eqref{KtL2}, \eqref{boundHdeltaj1} and the It\={o} isometry, we have
\begin{equation}\label{FixedPointSto}
\left\|\sum_{j\geq 0}\int_0^\cdot K_{\cdot-s}^\# D^\delta_j f(s) \dd\beta_j(s)\right\|_{E_T}^2
\leq \|\nabla\psi\|_{L^1(\R^N)}^2\frac{1}{\delta^2}\frac{1}{2M+N}\|f\|_{E_T}^2
\end{equation}
provided that $M\geq N/2$. By \eqref{KtL2}, \eqref{boundHdeltaj2}, provided that $M\geq N/2$, we obtain also 
\begin{equation}\label{FixedPointDrift}
\left\|\sum_{j\geq 0}\int_0^\cdot K_{\cdot-s}^\# (\left[D^\delta_j\right]^2  f(s))\dd s\right\|_{E_T}
\leq \|\nabla\psi\|_{L^1(\R^N)}^2\frac{1}{\delta^2}\frac{2}{2M+N}\|f\|_{E_T}.
\end{equation}
Hence for $M$ large enough, the map $\mathcal{I}^\delta$ is a strict contraction on $E_T$. Therefore the result stems from the Banach fixed-point Theorem. \qed\bigskip

\begin{proposition}[Weak solutions to \eqref{FPdelta}] Let $f_\mathrm{in}\in H$. Let $f^\delta\in \mathcal{C}([0,T];L^2(\Omega;H))$ be the mild solution to \eqref{FPdelta} with initial datum $f_\mathrm{in}$. Then $f^\delta$ is a weak solution to \eqref{FPdelta} on $[0,T]$ in the sense that for all $\varphi$ in $\mathcal{C}^\infty_c(\T^N\times\R^N)$ and all $t\geq 0$,
\begin{equation}\label{SolutionFPdelta}
\begin{array}{lll}
& \displaystyle \langle f^\delta(t),\varphi\rangle  &=\ \displaystyle \langle f_\mathrm{in},\varphi\rangle  + \int_0^t \langle f^\delta(s), v\cdot\nabla_x \varphi\rangle \dd s + \lambda\sum_{j\geq 0}\int_0^t \left\langle f^\delta(s),F_j^\delta\cdot J^\delta\nabla_v \varphi\right\rangle \dd\beta_j(s) \\
&& \displaystyle+\int_0^t\langle f^\delta(s),\cQ^*(\varphi)\rangle \dd s +\frac{\lambda^2}{2}\sum\limits_{j\geq 0} \int_0^t \left\langle  f^\delta(s),\left(F_j^\delta\cdot J^\delta\nabla_v \right)^2\varphi\right\rangle \dd s, \quad \text{a.s.},
\end{array}
\end{equation}
where $\cQ^*$ is defined by \eqref{AdjointQ}. 
\label{prop:weakfdelta}\end{proposition}

\textbf{Proof of Proposition~\ref{prop:weakfdelta}.} We apply \cite[Theorem~6.5]{daprato} to obtain \eqref{SolutionFPdelta}. This means that we interpret \eqref{FPdelta} as \cite[Eq.~(6.1)]{daprato}. The correspondence in notation is as follows. In \cite[Paragraph~6.1]{daprato}, the letter $H$ denotes the space $L^2(\T^N\times\R^N)$ (consistently with our notational convention), $U=H$, $Q$ is the identity, $U_1$ is any Hilbert space containing $U=H$ with Hilbert-Schmidt embedding, $U_0=U$, $W(t)=\sum_{j\geq 0}\beta_j(t) \eps_j$, where $(\eps_j)$ is a Hilbert basis of $H$. The letter $X$ stands for $f^\delta$, the operator $A$ is the transport-Fokker-Planck operator $\L_\mathrm{FP}$ of \eqref{kineticFP}, the source term is 
$$
f(t):=\frac{\lambda^2}{2}\sum_{j\geq 0}\left[D^\delta_j\right]^2  f^\delta(t),
$$
the operator $B$ with domain $D(B)=H$ is given by 
$$
B(X)\eps_j=-\lambda D^\delta_j X,\quad X\in H, j\geq 0,
$$ 
where $D^\delta_j$ is defined in \eqref{defHdeltaj}. In particular, the condition 
$$
\E\int_0^T\|B(X(s))\|_{L^0_2}^2 \dd s<+\infty
$$
of \cite[Theorem~6.5]{daprato} is satisfied by \eqref{boundHdeltaj1} since
$$
\E\int_0^T\|B(X(s))\|_{L^0_2}^2 \dd s=\lambda^2\E\int_0^T\sum_{j\geq 0}\|D^\delta_j f^\delta(s)\|_H^2 \dd s.
$$
This yields \eqref{SolutionFPdelta}. \qed\bigskip

\begin{proposition}[Regularity of solutions to \eqref{FPdelta}] Let $f^\delta_\mathrm{in}\in H$. Let $f^\delta$ be the unique mild solution in $\mathcal{C}([0,T];L^2(\Omega;H))$ to \eqref{FPdelta} with initial datum $f^\delta_\mathrm{in}$. Then, for every $k\in\N^*$, regularity $W^{k,2}(\T^N\times\R^N)$ is propagated in the sense that
\begin{equation}\label{SobolevFP}
\sup_{t\in[0,T]}\E\|f^\delta(t)\|^2_{W^{k,2}(\T^N\times\R^N)}\leq C_k(\delta)\|f^\delta_\mathrm{in}\|^2_{W^{k,2}(\T^N\times\R^N)},
\end{equation}
where the constant $C_k(\delta)$ depends only on $\delta$, $T$, $k$, $\psi$ and $N$.
\label{prop:regfdelta}\end{proposition}

\textbf{Proof of Proposition~\ref{prop:regfdelta}.} We consider the case $k=1$ only, the proof of \eqref{SobolevFP} for higher-order regularity being completely similar. Note first that when $f\in W^{1,2}(\T^N\times\R^N)$, from \eqref{explicitKt} and \eqref{actionKt}, we derive for any $t\in[0,T]$ $\nabla_x K_t^\# f=K_t^\#\nabla_x f$ and
$$
\nabla_v K_t^\# f=e^t K_t^\#\nabla_v f+(1-e^t)K_t^\#\nabla_x f.
$$
Our starting point is the iteration scheme $f^\delta_0=f^\delta_{\mathrm{in}}$, $f^\delta_{m+1}=\mathcal{I}^\delta(f^\delta_m)$, where $\mathcal{I}^\delta$ is defined in \eqref{mildFPdeltaH}. The sequence $(f^\delta_m)$ converges to $f^\delta$ as $m\to\infty$ in the space $E_T$ used above. By differentiating the above scheme and using variants of estimates \eqref{FixedPointSto}-\eqref{FixedPointDrift} we obtain the differential inequality, $0\leq t\leq T$, $m\geq0$,
\begin{equation}\label{preGronwallREG}
\E\|f_{m+1}^\delta(t)\|_{W^{1,2}(\T^N\times\R^N)}^2\leq C(\delta)\left[\|f^\delta_\mathrm{in}\|^2_{W^{1,2}(\T^N\times\R^N)}
+\int_0^t \|f_m^\delta(s)\|_{W^{1,2}(\T^N\times\R^N)}^2\dd s\right],
\end{equation}
for a constant $C(\delta)$ depending on $\delta$, $T$, $\psi$ and $N$. This proves recursively that for any $m$ and $0\leq t\leq T$
$$
\E\|\nabla_{x,v}f_m^\delta(t)\|_{W^{1,2}(\T^N\times\R^N)}^2
\leq C(\delta)\|f^\delta_\mathrm{in}\|^2_{W^{1,2}(\T^N\times\R^N)} \sum_{p=0}^{m}\frac{(C(\delta) t)^p}{p!}\,.
$$
By lower semi-continuity of $\|\nabla_{x,v}(\cdot)\|_{E_T}$ on $E_T$, taking the limit $m\to\infty$ yields \eqref{SobolevFP} for $k=1$ with $C_1(\delta)=C(\delta)e^{TC(\delta)}$. \qed\bigskip

\subsection{Existence of weak solutions}\label{app:thmexFPexist}

Our goal is now to take the limit $\delta\to0$ and prove the existence of a solution to \eqref{FP}. Our first step provides bounds uniform with respect to $\delta$.

\begin{proposition}[Uniform bounds on solutions to \eqref{FPdelta}] Let $f^\delta_\mathrm{in}\in H$ satisfy 
\begin{equation}\label{indeltareg}
f^\delta_\mathrm{in}\in W^{k_0,2}(\T^N\times\R^N),
\end{equation} 
with a degree of regularity $k_0>2+N$. Let $f^\delta$ be the unique mild solution in $\mathcal{C}([0,T];L^2(\Omega;H))$ to \eqref{FPdelta} with initial datum $f^\delta_\mathrm{in}$. Then $f^\delta$ satisfies the following energy estimate
\begin{equation}\label{eq:Udelta}
\sup_{t\in[0,T]}\E\|f^\delta(t)\|_{L^2(\T^N\times\R^N)}^2+\E\|\nabla_v f^\delta\|_{L^2(\T^N\times[0,T]\times\R^N)}^2
\leq C\|f^\delta_\mathrm{in}\|_{L^2(\T^N\times\R^N)}^2
\end{equation}
where $C$ depends only on $T$ and $N$. Furthermore,
\begin{equation}\label{eq:Udelta2}
\E\|f^\delta\|_{C_w^\alpha([0,T];L^2(\T^N\times\R^N))}^2\leq C\|f^\delta_\mathrm{in}\|_{L^2(\T^N\times\R^N)}^2,
\end{equation}
where the constant $C$ depends only on $T$ and $N$, and where the norm $\|\cdot\|_{\mathcal{C}_w^\alpha([0,T];L^2(\T^N\times\R^N))}$ is exactly as defined in \eqref{defCwalpha}.
\label{prop:Udelta}\end{proposition}

\textbf{Proof of Proposition~\ref{prop:Udelta}.} Our choice of $k_0$ ensures a continuous embedding of the Sobolev space $W^{k_0,2}(\T^N\times\R^N)$ in $\mathcal{C}^2(\T^N\times\R^N)$. By Proposition~\ref{prop:regfdelta} and \eqref{indeltareg}, $f^\delta$ has a modification still denoted $f^\delta$ such that $f^\delta(t)$ is of class $\mathcal{C}^2(\T^N\times\R^N)$ for all $t\in[0,T]$ and such that \eqref{FPdelta} is satisfied for all $(x,v)$, almost surely. At this level of regularity we may apply It\={o}'s Formula to obtain \emph{point-wise}
\begin{align}
|f^\delta(t)|^2\quad=&\quad|f_\mathrm{in}^\delta(t)|^2\quad-
\quad2\lambda\,\sum_{j\geq 0}\int_0^t f^\delta(s) D^\delta_j f^\delta(s)\,\dd\beta_j(s)\nonumber\\
&+\quad\int_0^t \left(-\tfrac12v\cdot\nabla_x |f^\delta(s)|^2+\cQ(|f^\delta(s)|^2)-2|\nabla_v f^\delta(s)|^2+N|f^\delta(s)|^2\right)\,\dd s\label{fdeltasquare}\\
&
+\lambda^2\,\sum_{j\geq 0}\int_0^t \left(f^\delta(s) [D^\delta_j]^2 f^\delta(s)+|D^\delta_jf^\delta|^2(s)\right)\,\dd s.
\nonumber
\end{align}
Note that
$$
\iint_{\T^N\times\R^N} f(x,v) [D^\delta_j]^2 f(x,v)\dd x\dd v=-\iint_{\T^N\times\R^N} [D^\delta_j f(x,v)]^2 \dd x\dd v
$$
since $\psi$ is radially symmetric. Consequently \eqref{fdeltasquare} gives, by integration over $(\omega,x,v)$,
$$
\sup_{t\in[0,T]}e^{-Nt}\E\|f^\delta(t)\|_{L^2(\T^N\times\R^N)}^2+
2\int_0^T e^{-Nt}\E\|\nabla_v f^\delta(t)\|_{L^2(\T^N\times\R^N)}^2\dd t
\leq\|f^\delta_\mathrm{in}\|_{L^2(\T^N\times\R^N)}^2
$$
which yields \eqref{eq:Udelta}. We deduce the H\"older estimate \eqref{eq:Udelta2} from the energy estimate \eqref{eq:Udelta} and the weak formulation  \eqref{SolutionFPdelta}. As the method is the same as the one used to obtain the weak continuity in time in the proof of Theorem~\ref{thmex}, we skip details here and refer the reader to arguments leading from \eqref{supEgm} to \eqref{CwHolder}. \qed\bigskip

Then we consider a sequence of initial data $f^\delta_\mathrm{in}$, satisfying the regularity hypothesis \eqref{indeltareg}, that converges to $f_\mathrm{in}$ in $H$ as $\delta\to0$. Let $f^\delta$ be the associated mild solution to \eqref{FPdelta} (\textit{cf.} Proposition~\ref{prop:Efdelta}). The argument proving the convergence of $f^\delta$ in $L^2(\Omega;\mathcal{C}^\alpha_w([0,T];L^2(\T^N\times\R^N)))$ to a weak solution of \eqref{FP} is completely similar to the one given in the existence part of the proof of Theorem~\ref{thmex} and therefore we omit it. Let us only mention that we take $F^\delta_j=F_j*\psi_\delta$, $\psi_\delta$ being viewed as a function of $x$ here, and we use the elementary estimate
$$
\sum_{j\geq 0}\|F_j^\delta-F_j\|^2_{\infty}
\leq \sum_{j\geq 0}\omega_{L^\infty}(F_j,\delta;\T^N)^2
\leq \delta^2\sum_{j\geq 0}\|\nabla_xF_j\|^2_{\infty}
\stackrel{\delta\to0}{\rightarrow}0
$$
that follows from \eqref{noise}. To establish the latter claim we have introduced the notion of  $L^p$ modulus of continuity $\omega_{L^p}(a,\delta;K)$ for $p\in[1,+\infty]$ and any measurable set $K$, defined by
\begin{equation}\label{omegaa}
\omega_{L^p}(a,\delta;K)=\sup\left\{\|a(\cdot)-a(\cdot+Y)\|_{L^p(K)}; |Y|<\delta\right\}.
\end{equation}

\subsection{Uniqueness of weak solutions}\label{app:thmexFPuniq}

We derive uniqueness of solutions to \eqref{FP} as a consequence of \eqref{EEstimateFPsolutions}. To establish \eqref{EEstimateFPsolutions} for weak solutions, we follow a procedure similar to the one in \cite[Appendix~A.20]{villani}, using cut-off functions and convolution kernels to localize and regularize the solution and performing relevant estimates on localized regularizations then obtaining the claimed \eqref{EEstimateFPsolutions} by taking suitable limits. 

Again we use $\psi_\delta$, $\delta>0$, defined in Section~\ref{app:thmexFPapprox} as an approximation of identity for the convolution. Also we choose a cut-off function $\chi\in \mathcal{C}^\infty_c(\R^N)$ such that $0\leq\chi\leq 1$, $\chi\equiv 1$ on $B(0,1)$ and with support in the ball $B(0,2)$. Let $\delta>0$ and $\eps>0$. We denote by $\chi_\eps$ a rescaled cut-off function obtained through
$$
\chi_\eps(v)=\chi(\eps v).
$$
In what follows the generic variable in $\T^N\times\R^N$ is denoted by $X=(x,v)$ and we also abuse slightly notational conventions by sometimes seeing functions of $x$ or $v$ only as functions of $X$, for instance we use indifferently $\chi_\eps(X)$ for $\chi_\eps(v)$.
We denote by $\psi_\delta^\otimes$ the kernel on $\R^N\times\R^N$ defined by
\begin{equation}\label{L1rescale}
\psi_\delta^\otimes(X)=\frac{1}{\delta^{2N}}\psi\left(\frac{x}{\delta}\right)\psi\left(\frac{v}{\delta}\right)=\psi_{\delta}(x)\psi_{\delta}(v).
\end{equation}
In contrast with \cite[Appendix~A.20]{villani}, we use the same regularization parameter $\delta$ in space and in velocity. This slight simplification follows from a different treatment of commutators (\textit{cf.} \eqref{smallcommutator}). We denote by $J_\delta$ the operator of convolution with $\psi^\otimes_\delta$. Since $\psi^\otimes$ is symmetric, $J_\delta$ is self-adjoint on $L^2(\T^N\times\R^N)$. Moreover since $\psi^\otimes$ is smooth and compactly supported, $J_\delta$ acts continuously on $\mathcal{C}^\infty_c(\T^N\times\R^N)$ and maps $\mathcal{D}'(\T^N\times\R^N)$ on $\mathcal{C}^\infty(\T^N\times\R^N)$. In what follows we use $g_{\eps,\delta}$ as a shorthand for $\chi_\eps J_\delta(g)$ --- a localized and regularized version of $g$ --- and $g^{\eps,\delta}$ for $J_\delta(\chi_\eps g)$.

When $\varphi\in \mathcal{C}^\infty_c(\T^N\times\R^N)$, we may use  $\varphi^{\eps,\delta}$  as a test function in \eqref{SolutionFP} to obtain an equation of the form
\begin{equation}\label{preItofpesdelta}
\< f_{\eps,\delta}(t),\varphi\> = \< [f_\mathrm{in}]_{\eps,\delta},\varphi\> +\int_0^t \<g_{\eps,\delta}(s),\varphi\> \dd s
+\sum_{j\geq 0}\int_0^t \<h^j_{\eps,\delta}(s),\varphi\> \dd\beta_j(s),
\end{equation}
where, by properties of $f$, both
$$
t\mapsto \<g_{\eps,\delta}(t),\varphi\>
\qquad\textrm{ and }\qquad
t\mapsto\<h^j_{\eps,\delta}(t),\varphi\>
$$
are adapted and a.s. continuous on $[0,T]$. Indeed \eqref{preItofpesdelta} holds with $h^j_{\eps,\delta}(s)$ given by
\begin{equation}\label{hjepsdelta}
h^j_{\eps,\delta}(s)=-\lambda\,
(F_j\cdot\nabla_v f(s))_{\eps,\delta}
\end{equation}
and
\begin{equation}\label{gepsdelta}
g_{\eps,\delta}(s)=
[\div_v(vf(s))]_{\eps,\delta}-(v\cdot\nabla_xf(s))_{\eps,\delta}
+(\Delta_v f(s))_{\eps,\delta}
+\frac{\lambda^2}{2}\sum_{j\geq0}((F_j\cdot\nabla_v)^2(f(s)))_{\eps,\delta}\,.
\end{equation}
Applying It\={o}'s Formula to \eqref{preItofpesdelta} and summing over $\varphi\in\mathcal{B}$, where $\mathcal{B}$ is a Hilbert basis of $L^2(\T^N\times\R^N)$ constituted of elements of $\mathcal{C}^\infty_c(\T^N\times\R^N)$, leads \emph{via} Parseval's identity to
\begin{equation}\label{postItofpesdeltasum}
\frac12\E\| f_{\eps,\delta}(t)\|^2 = \frac12\E\|[f_\mathrm{in}]_{\eps,\delta}\|^2 +\E\int_0^t\Big[ \<g_{\eps,\delta}(s),f_{\eps,\delta}(s)\> +\frac12\sum_{j\geq 0}\|h^j_{\eps,\delta}(s)\|^2\Big] \dd s.
\end{equation}

We go on with the proof of \eqref{EEstimateFPsolutions}. To do so, we need to prove that in a certain limit $\delta\to0$, $\eps\to0$, we recover skew-symmetry of divergence-free vector fields, positivity of $-\Delta_v$, cancellation of Stratonovich terms... The most harmless corrective terms, vanishing at the limit, may be handled with the following result.

\begin{lemma} There exists $C$ such that for any $g\in L^2(\T^N\times\R^N)$ and any $\eps>0$
\begin{equation}\label{Dchieps}
\|\{\chi_\eps^2,\nabla_v\}g\|\,\leq\,C\,\eps\,\|g\|\,.
\end{equation}
There exists $C$ such that for any Lipschitz $a$ on $\T^N\times\R^N$, any $g\in L^2(\T^N\times\R^N)$ and any $\delta>0$
\begin{equation}\label{smallcommutator}
\|\{a,J_\delta\}g\|\,\leq\,C\,\delta\,\|\nabla_X a\|_{L^\infty(\T^N\times\R^N)}\,\|g\|\,.
\end{equation}
\label{lem:smallepsdelta}\end{lemma}

\textbf{Proof of Lemma~\ref{lem:smallepsdelta}.} 
Estimate~\eqref{Dchieps} is readily obtained with $C=2\|\nabla_v(\chi)\|_{L^\infty(\R^N)}$.

To obtain \eqref{smallcommutator}, we observe that
$$
\{a,J_\delta\}g(X)=\int_{\R^N\times\R^N} (a(X)-a(X-Y))\psi^\otimes_\delta(Y)  g(X-Y)\dd Y,
$$
and conclude with $C=\sqrt{2}$ by noting that $|Y|\leq\sqrt{2}\delta$ on the support of $\psi^\otimes_\delta$.\qed\bigskip

In particular, for some constant $C$, we conclude that
$$
\left|\langle(\Delta_v f(s))_{\eps,\delta},(f(s))_{\eps,\delta}\rangle+\|(\nabla_vf(s))_{\eps,\delta}\|^2\right|\,\leq\,C\,\eps\,\|f(s)\|\,\|\nabla_v f(s)\|
$$
and
$$
\begin{array}{l}
\displaystyle\Big|\frac{\lambda^2}{2}\sum_{j\geq0}\langle((F_j\cdot\nabla_v)^2(f(s)))_{\eps,\delta},(f(s))_{\eps,\delta}\rangle
+\frac12\sum_{j\geq 0}\|h^j_{\eps,\delta}(s)\|^2\Big|\\[0.5em]
\displaystyle\qquad
\leq\ 
\frac{\lambda^2}{2}\sum_{j\geq0}|\langle(J_\delta(F_j (F_j\cdot\nabla_v)(f(s))),\{\chi_\eps^2,\nabla_v\}J_\delta(f(s))\rangle|\\
\displaystyle\qquad\qquad
+\ 
\frac{\lambda^2}{2}\sum_{j\geq0}|\langle \chi_\eps\{F_j,J_\delta\}(F_j\cdot\nabla_v)(f(s)),(\nabla_vf(s))_{\eps,\delta}\rangle|\\
\displaystyle\qquad\qquad
+\ 
\frac{\lambda^2}{2}\sum_{j\geq0}|\langle((F_j\cdot\nabla_v)(f(s)))_{\delta,\eps},\chi_\eps\{F_j,J_\delta\}\nabla_vf(s)\rangle|\\[0.5em]
\displaystyle\qquad
\leq\ C\,\eps\,\|f(s)\|\,\|\nabla_v f(s)\|
\,+\,C\,\delta\,\|\nabla_v f(s)\|^2
\end{array}
$$
by using \eqref{noise}.

The remaining terms of the right-hand side of \eqref{gepsdelta} require more care, essentially because we do not control moments in velocity nor space derivatives.

To deal with the second of those terms, we propose the following variation on the proof of estimate \eqref{smallcommutator}. Observe that for any smooth $g$ 
$$
\{v\cdot\nabla_x,J_\delta\}g(X)=
\int_{\R^N\times\R^N} w\cdot\nabla_x(\psi^\otimes_\delta)(Y)  (g(X-Y)-g(X))\dd Y,
$$
with implicit notation $Y=(y,w)$, so that
$$
\|\{v\cdot\nabla_x,J_\delta\}g\|
\leq C\,\omega_{L^2}(g,\sqrt{2}\delta;\T^N\times\R^N)
$$
with $C=\|v\cdot\nabla_x\psi^\otimes\|_{L^1(\T^N\times\R^N)}$, where $\omega_{L^2}$ is as in \eqref{omegaa}. By a classical density/semi-continuity argument this extends to any $g\in L^2(\T^N\times\R^N)$. As a consequence, 
\begin{align*}
|\langle(v\cdot\nabla_xf(s))_{\eps,\delta},(f(s))_{\eps,\delta}\rangle|
=&|\langle \chi_\eps \{v\cdot\nabla_x,J_\delta\}f(s),(f(s))_{\eps,\delta}\rangle|\\
\leq & C\,\|f(s)\|\,\omega_{L^2}(f(s),\sqrt{2}\delta;\T^N\times\R^N)
\end{align*}
for some constant $C$. This is the first bound that does not provide a quantitative convergence to zero. Yet note that the estimate is uniform with respect to $\eps$.

Concerning the last term we note that
$$
\begin{array}{l}
\displaystyle\Big|\langle([\div_v(vf(s))]_{\eps,\delta},(f(s))_{\eps,\delta}\rangle
-\frac{N}{2}\|(f(s))_{\eps,\delta}(s)\|^2\Big|\\[0.5em]
\displaystyle\qquad
=\ |\langle \{\chi_\eps J_\delta,\textrm{div}(v\,\cdot)\}f(s),(f(s))_{\eps,\delta}\rangle|\\[0.5em]
\displaystyle\qquad
\leq\ |\langle\chi_\eps\{J_\delta,v\}\cdot\nabla_vf(s),(f(s))_{\eps,\delta}\rangle|
\ +\ 
|\langle v\cdot\{\chi_\eps,\nabla_v\}J_\delta(f(s)),(f(s))_{\eps,\delta}\rangle|\\[0.5em]
\displaystyle\qquad
\leq\ C\,\delta\,\|f(s)\|\,\|\nabla_v f(s)\|
\,+\,C\,\|f(s)\|\,\,\|J_\delta f(s)\|_{L^2(\{(x,v);|v|\geq \eps^{-1}\})}
\end{array}
$$
for some constant $C$ (involving $\|v\cdot\nabla\chi\|_{L^\infty(\R^N)}$). The latter estimate is far from being uniform but is sufficient jointly with the foregoing estimates to conclude by taking first $\limsup_{\eps\to0}$ then $\limsup_{\delta\to0}$ that
$$
\frac12\E\|f(t)\|^2+\E\int_0^t\|\nabla_v f(s)\|^2 \dd s
\leq \frac12\E\|f_\mathrm{in}\|^2+\frac{N}{2}\E\int_0^t \|f(s)\|^2\dd s.
$$
thus to obtain \eqref{EEstimateFPsolutions} by an application of the Gronwall Lemma. \qed

\bibliographystyle{abbrv}
\bibliography{biblio}

\end{document}